\DeclareMathAlphabet{\mathpzc}{OT1}{pzc}{m}{it} 
\newtheorem{theorem}{Theorem}
\newtheorem{proposition}[theorem]{Proposition}
\newcommand{\R}{\mathbb{{R}}}
\newcommand{\N}{\mathbb{{N}}}
\newcommand{\C}{\mathbb{{C}}}
\newcommand{\INT}{\int_0^\infty}
\newcommand{\DO}{(0,\infty)}
\thanks{The first author was partially supported by MTM2016-79436-P. The second author was partially supported by MTM2015-66157-C2-1-P}
\author[Betancor]{J.J. Betancor}
\address{Departamento de An\'alisis Matem\'atico,  Universidad de La Laguna, 38071, S/C de Tenerife, Spain.}
\email{jbetanco@ull.es}
\author[De Le\'on]{M. De Le\'on-Contreras}
\address{Departamento de Matem\'aticas, Facultad de Ciencias, Universidad Aut\'onoma de Madrid, 28049 Madrid, Spain.}
\email{marta.leon@uam.es}
\date{\today}
\begin{document}

\title[Singular integrals parabolic Bessel equation]
{Parabolic equations involving Bessel operators and singular
integrals}

\subjclass[2000]{42B25, 42B15 (primary), 42B20, 46B20, 46E40 (secondary)}

\keywords{parabolic equations, Riesz transforms, weighted
inequalities, Bessel operators}

\begin{abstract}
In this paper we consider the evolution equation $\partial_t u=\Delta_\mu u+f$ and the corresponding Cauchy problem, where $\Delta_\mu$ represents the Bessel operator $\partial_x^2+(\frac{1}{4}-\mu^2)x^{-2}$,  for every $\mu>-1$. We establish weighted and mixed weighted Sobolev type inequalities for solutions of Bessel parabolic equations. We use singular integrals techniques in a parabolic setting.
\end{abstract}

\maketitle
\section{Introduction}
The model of a parabolic differential equation is the following

    \begin{equation}\label{I1.1}
    \frac{\partial u(t,x)}{\partial t}= \Delta u(t,x)+f(t,x), \;\; (t,x)\in\R^{n+1} \;\; or \;\; (t,x)\in\DO\times \R^n,
    \end{equation}
where $\Delta$ denotes the Laplace operator. By $\Gamma$ we denote
the fundamental solution of the heat equation $\partial_t u-\Delta
u=0$, on $\DO\times \R^n$, that is, $
\Gamma(t,x)=\frac{e^{-\frac{|x|^2}{4t}}}{(4\pi t)^{n/2}}$,
$x\in\R^n$, and $t\in\DO$. Assume that $f$ is a bounded function
defined on $\DO\times\R^n$ with compact support. We define
    $$
    u(t,x)=\int_{0}^{t}\int_{\R^n}\Gamma(t-s,x-y)f(s,y)dyds, \;\; (t,x)\in\DO\times\R^n.
    $$
    Thus, $u$ is a solution of \eqref{I1.1} on $\DO\times\R^n$ such that $u(0,x)=0$, $x\in\R^n$. Moreover, Jones \cite{Jones1} proved that
    \begin{align*}
    &\partial^2_{x_i,x_j}u(t,x)=\displaystyle\lim_{\epsilon\to 0^+}\int_0^{t-\epsilon}\int_{\R^n}\partial_{y_i y_j}\Gamma(t-s,x-y)f(s,y)dyds, \;\; (t,x)\in\DO\times \R^n, \\
    &\partial_{t}u(t,x)=\displaystyle\lim_{\epsilon\to 0^+}\int_0^{t-\epsilon}\int_{\R^n}\partial_{t}\Gamma(t-s,x-y)f(s,y)dyds+f(t,x), \;\; (t,x)\in\DO\times \R^n,
    \end{align*}
    and there exists a constant $C>0$ such that
    \begin{equation}\label{I1.2}
    \|\partial^2_{x_i,x_j}u\|_{L^p(\DO\times\R^n)}+\|\partial_t u\|_{L^p(\DO\times\R^n)}\le C\|f\|_{L^p(\DO\times\R^n)}.
    \end{equation}
    In \cite{Jones2}, $L^p$-inequalities like \eqref{I1.2} were established for a more general parabolic singular integral, where the derivatives of the fundamental solution $\Gamma$ are replaced by more general kernels.

    Recently, Jones' results have been extended to weighted and mixed weighted $L^p$ spaces by Ping, Stinga and Torrea (\cite[Theorem 2.4]{PST}).
    In \cite{PST}, they also considered the equation \eqref{I1.1} on the whole space $\R^{n+1}$. In \cite[Theorem 2.3]{PST},
    it was proved that if $f$ is a $C^2$-function defined on $\R^{n+1}$ with compact support,  when $n\ge 2,$ the function $u$ given by
    $$
        u(t,x)=\INT\int_{\R^n}\Gamma(s,y)f(t-s,x-y)dyds, \;\; (t,x)\in\R^{n+1},
    $$
    is a solution of \eqref{I1.1} on $\R^{n+1}$, where
    \begin{align*}
    &\partial^2_{x_i,x_j}u(t,x)=\displaystyle\lim_{\epsilon\to 0^+}\int_{\Omega_\epsilon}\partial_{y_i y_j}\Gamma(s,y)f(t-s,x-y)dyds-A_n f(t,x), \;\; (t,x)\in\R^{n+1}, \; i,j=1,\dots,n,
    \end{align*}
    and
    \begin{align*}
    &\partial_{t}u(t,x)=\displaystyle\lim_{\epsilon\to 0^+}\int_{\Omega_\epsilon}\partial_{s}\Gamma(s,y)f(t-s,x-y)dyds+(1-A_n)f(t,x), \;\; (t,x)\in\R^{n+1},
    \end{align*}
where $A_n=\frac{1}{n\Gamma(n/2)}\displaystyle\int_{1/4}^\infty
\omega^{\frac{n-2}{2}}e^{-\omega}d\omega$. Here, for every
$\epsilon>0$, $\Omega_\epsilon$ denotes the parabolic region
$\Omega_\epsilon=\{(t,x)\in\DO\times\R: \;
\sqrt{t}+|x|>\epsilon\}$.

    By using Calder\'on-Zygmund theory in the parabolic setting in  \cite[Theorem 2.3, (B)]{PST}, weighted norm $L^p$-inequalities and  weighted weak $L^1$-estimates as \eqref{I1.2} were established in this context.

    Parabolic equation of \eqref{I1.1} type, where the Laplace operator $\Delta$ is replaced by the Hermite operator $\mathcal{H}=\Delta-|x|^2$, was also studied in \cite{PST}. In \cite[Theorems 1.3 and 1.4]{PST}, weighted parabolic Sobolev estimates (like \eqref{I1.2}) were established in the Hermite setting.

    It is well-known that partial differential equations and singular integrals are closely connected (see for instance \cite{CaZy1} and \cite{CaZy2}).
    The previous commented examples show this relationship. Although elliptic PDE's have received a preferential and continuous treatment over time,
    parabolic PDE's have been already studied since the sixties in the last century. Apart from Jones' papers (\cite{Jones1} and \cite{Jones2}),
    we can find relevant results about parabolic PDE's and "a priori" Sobolev estimates in \cite{Fabes}, \cite{FS}, and \cite{Sa}.
    In the last years, the study of parabolic equations by using harmonic analysis techniques has taken great interest (see, for instance, \cite{AEN}, \cite{CNS}, \cite{CRS} and \cite{Ny}).

    On the other hand, the use of Calder\'on-Zygmund theory in the context of parabolic PDE's  and parabolic singular integrals appears also in \cite{RT2}, where some of Jones' results are improved, and more recently in \cite{Li} where a singular integral approach to the maximal $L^p$-regularity is developed (see also \cite{KLK}, \cite{Kry1} and \cite{Kry2}).

    In this paper we consider the parabolic equations
    \begin{equation}\label{BP}
        \frac{\partial u(t,x)}{\partial t}= \Delta_\mu u(t,x)+f(t,x), \;\; (t,x)\in\R \times\DO \;\; or \;\; (t,x)\in\DO\times \DO,
    \end{equation}
    where, for every $\mu>-1$, $\Delta_\mu$ represents the Bessel operator defined by $\Delta_\mu= \partial_x^2+(\frac{1}{4}-\mu^2)x^{-2}$.

    Our study is motivated by \cite{PST}. The Bessel operator $\Delta_\mu$ can be seen as a one dimensional Schr\"odinger operator
    with the singular potential $V_\mu(x)=(\frac{1}{4}-\mu^2)x^{-2}$, $x\in\DO$. Singular integrals associated with parabolic Schr\"odinger
    operators $\partial_t-\Delta+V$ in $\R^{n+1}$ have been investigated in \cite{CMS1}, \cite{GJ}, \cite{LH} and \cite{OS}.
    Our potentials $V_\mu$, $\mu>-1$, are not included in the class of potentials considered in the above mentioned papers.
    There, the potentials $V$ are nonnegative and in $L^1_{\rm loc}(\R^{n+1})$ and they belong to the parabolic reverse H\"older classes.

    Let $\mu>-1$. For every $\phi\in C^\infty_c(\DO),$ the space of smooth functions with compact support on $\DO$, the Hankel transform $h_\mu(\phi)$ of $\phi$ is defined by
    $$
    h_\mu(\phi)(x)=\INT \sqrt{xy}J_\mu(xy)\phi(y)dy, \;\; x\in\DO,
    $$
    where $J_\mu$ denotes the Bessel function of the first kind and order $\mu$.
    $h_\mu$ can be extended to $L^2(\DO)$ as an isometry (see  \cite{BSt} and \cite{Tit}) and $h_{\mu}^{-1}=h_\mu$ on $L^2(\DO)$.
     For every $\phi\in  C^\infty_c(\DO),$ we have that (see \cite[Lemma 5.4-1]{Ze}),
    $$
    h_\mu(\Delta_\mu \phi)(x)=-x^2h_\mu(\phi)(x), \;\;x\in\DO.
    $$
    We extend the definition of the operator $\Delta_\mu$ as follows. We define the domain of $  {\bf\Delta}_\mu$, $D(\bf{\Delta}_\mu)$,
    by  $D({\bf \Delta_\mu})=\{ \phi\in L^2(\DO):\; x^2 h_\mu(\phi)\in L^2(\DO) \}$ and, for every $\phi\in D(\bf{\Delta_\mu})$, ${\bf\Delta}_\mu\phi=-h_\mu(x^2h_\mu(\phi))$. According to \cite[Theorem 5.4-1]{Ze}, $C^\infty_c(\DO)\subset D(\bf{\Delta}_\mu)$ and ${\bf\Delta_\mu}\phi=\Delta_\mu \phi$, $\phi\in C^\infty_c(\DO)$. Note that, for every $\mu\in (-1,1)$, $\Delta_\mu\phi=\Delta_{-\mu}\phi$, $\phi\in C^\infty_c(\DO)$, and $\bf{\Delta}_\mu\neq \bf{\Delta}_{-\mu}$.

    The operator $- \bf{\Delta}_\mu$ is positive and selfadjoint on $L^2(\DO)$.
    Moreover, $- \bf{\Delta}_\mu$ generates a semigroup $\{W_t^\mu\}_{t>0}$ of operators in $L^2(\DO)$ where, for every $t>0$ and $\phi\in L^2(\DO)$,
    \begin{equation}\label{I1.3}
    W_t^\mu (\phi)(x)=\INT W_t^\mu(x,y)\phi(y)dy, \;\; x\in\DO.
    \end{equation}
    Here, $W_t^\mu(x,y)=\frac{(xy)^{1/2}}{2t}I_\mu\left( \frac{xy}{2t}\right) e^{-\frac{x^2+y^2}{4t}}$, $x,y,t\in\DO$, where $I_\mu$ represents the modified Bessel
    function of the first kind and order $\mu$. $\{W_t^\mu\}_{t>0}$ is usually called the heat semigroup associated with the Bessel operator $\bf{\Delta}_\mu$.

    If, for every $t>0$, $W_t^\mu$ is given as in \eqref{I1.3}, $\{ W_t^\mu\}_{t>0}$ also defines a semigroup of operators on $L^p(\DO)$, for each $1<p<\infty$ when
    $\mu>-1/2$ and for each $1<p<\infty$ such that $-\mu-1/2<\frac{1}{p}<\mu+3/2$, when $-1<\mu\le -1/2$.

    Harmonic analysis associated with Bessel operator (Riesz transforms, maximal operators, Littlewood-Paley functions,
    fractional Bessel operators, Hardy spaces,..) has been developed in the last years (\cite{BCC}, \cite{BCS}, \cite{BDT}, \cite{BHNV}, \cite{BFMR}, \cite{BFMT}, \cite{DLMWY} and \cite{DLWY})
    although the first results about this topic had been obtain by Muckenhoupt and Stein (\cite{MS}) in the sixties of the last century as the paper about parabolic singular integrals mentioned at the beginning.

    Our results concerning to the solutions of \eqref{BP} in the whole space $\R\times\DO$ are the following.

    \begin{theorem}\label{teo1}
        Assume that $f\in L^\infty(\R\times(0,\infty))$ has compact support on $\R\times(0,\infty)$. Then, the function $u(t,x),$ $(t,x)\in\R\times\DO$, given by
            \begin{equation*}\label{eq0}
        u(t,x)=\INT\INT W_\tau^\mu(x,y)f(t-\tau,y) dy d\tau, \;\; (t,x)\in\R\times\DO,
        \end{equation*}
        is defined by an absolutely convergent integral, for every $(t,x)\in\R\times\DO$. Moreover, if  $f$ is also in $ C^2(\R\times\DO)$, then, for every $(t,x)\in\R\times\DO$,
        $
        \frac{\partial u (t,x)}{\partial t}=\Delta_\mu u(t,x)+f(t,x),
        $
         being
        \begin{align*}
        \frac{\partial u (t,x)}{\partial t}&=\displaystyle\lim_{\epsilon\to 0^+}\int_{\epsilon}^{\infty}\INT    \frac{\partial}{\partial \tau}W_\tau^\mu(x,y)f(t-\tau,y)dyd\tau+f(t,x)\\
        &=\displaystyle\lim_{\epsilon\to 0^+}\int_{\Omega_\epsilon(x)}  \frac{\partial}{\partial \tau}W_\tau^\mu(x,y)f(t-\tau,y)dyd\tau+Af(t,x),\;\; t,x\in\DO,
        \end{align*}
        and
        \begin{align*}
        \frac{\partial^2 u (t,x)}{\partial x^2}&=\displaystyle\lim_{\epsilon\to
        0^+}\int_{\epsilon}^{\infty} \INT\frac{\partial^2}{\partial
        x^2}W_\tau^\mu(x,y)f(t-\tau,y)dyd\tau\\
        &=\displaystyle\lim_{\epsilon\to 0^+}\int_{\Omega_\epsilon(x)}\frac{\partial^2}{\partial x^2}W_\tau^\mu(x,y)f(t-\tau,y)dyd\tau-(1-A)f(t,x)  
        \end{align*}
where, for every $\epsilon,x\in\DO$,
$\Omega_{\epsilon}(x)=\{(\tau,y)\in\DO^2:
\;\tau^{1/2}+|x-y|>\epsilon\}$, and $A=\frac{1}{\sqrt{\pi}}\int_0^1
e^{-\frac{w^2}{4}}dw.$
    \end{theorem}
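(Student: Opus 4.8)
The plan is to follow the scheme of Jones and of Ping--Stinga--Torrea, the guiding principle being that near the diagonal the Bessel heat kernel $W_\tau^\mu(x,y)$ is asymptotically Gaussian, so that the required cancellations and the constant $A$ reduce to a classical one-dimensional computation. First I would settle the absolute convergence of the integral defining $u$. Using the standard pointwise bound $0\le W_\tau^\mu(x,y)\le C\tau^{-1/2}e^{-c(x-y)^2/\tau}$, coming from the asymptotics of $I_\mu$, together with the uniform mass bound $\INT W_\tau^\mu(x,y)\,dy\le C$, and observing that $f(t-\tau,\cdot)$ is supported, in $\tau$, in a bounded interval (because $f$ has compact support in $\R\times\DO$), the integral $\int_0^T\INT W_\tau^\mu(x,y)|f(t-\tau,y)|\,dy\,d\tau\le C\|f\|_\infty T$ is finite, the bounded mass controlling the behaviour at $\tau=0$.

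For the first representation of $\partial_t u$ I would differentiate under the integral sign, which is legitimate since $f\in C^2$ has compact support, to obtain $\partial_t u=\INT\INT W_\tau^\mu(x,y)\partial_t f(t-\tau,y)\,dy\,d\tau$. Writing $\partial_t f(t-\tau,y)=-\partial_\tau f(t-\tau,y)$ and integrating by parts in $\tau$ on $(\epsilon,\infty)$, the endpoint at infinity vanishes by the kernel decay and the compact support of $f$, while the endpoint at $\tau=\epsilon$ produces $\INT W_\epsilon^\mu(x,y)f(t-\epsilon,y)\,dy$, which converges to $f(t,x)$ as $\epsilon\to0^+$ because $\{W_\tau^\mu\}_{\tau>0}$ is an approximate identity on continuous functions. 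This yields the first formula for $\partial_t u$.

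The \textbf{main obstacle} is the representation of $\partial_x^2u$, since $\partial_x^2W_\tau^\mu$ fails to be integrable near $\tau=0$. The first derivative $\partial_xu=\INT\INT\partial_xW_\tau^\mu(x,y)f(t-\tau,y)\,dy\,d\tau$ is still absolutely convergent (the extra factor $\tau^{-1/2}$ is integrable after integrating in $y$). For the second derivative I would cut the $\tau$-integral at $\epsilon$, differentiate freely on $\{\tau>\epsilon\}$, and then differentiate the absolutely convergent expression for $\partial_x u$ through the singular integral. The device is the decomposition $f(t-\tau,y)=f(t-\tau,x)+\big(f(t-\tau,y)-f(t-\tau,x)\big)$: the increment term is absolutely convergent, since by the mean value theorem $|f(t-\tau,y)-f(t-\tau,x)|\le C|x-y|$ and $\INT|\partial_x^2W_\tau^\mu(x,y)|\,|x-y|\,dy\le C\tau^{-1/2}$, while the term carrying $f(t-\tau,x)$ is handled through the kernel identity $\partial_x^2W_\tau^\mu=\partial_\tau W_\tau^\mu-(\tfrac14-\mu^2)x^{-2}W_\tau^\mu$, reducing it to the already-analysed $\partial_\tau$ integral together with the absolutely convergent potential term $(\tfrac14-\mu^2)x^{-2}\INT\INT W_\tau^\mu f$. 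The same increment cancellation legitimises the passage of the derivative through the integral, showing that $\lim_{\epsilon\to0^+}\int_\epsilon^\infty\INT\partial_x^2W_\tau^\mu f\,dy\,d\tau$ exists and equals $\partial_x^2u$, with no boundary correction; subtracting the two resulting formulas and using the kernel identity once more gives $\partial_t u-\partial_x^2u=f+(\tfrac14-\mu^2)x^{-2}u$, that is, the equation $\partial_t u=\Delta_\mu u+f$.

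Finally, to pass to the representations with the parabolic excision $\Omega_\epsilon(x)=\{\tau^{1/2}+|x-y|>\epsilon\}$ and the constants $A$ and $1-A$, I would compare the two regularizations by showing that $\lim_{\epsilon\to0^+}\big(\int_{\Omega_\epsilon(x)}-\int_{\{\tau>\epsilon\}}\big)\partial_\tau W_\tau^\mu(x,y)f(t-\tau,y)\,dy\,d\tau=(1-A)f(t,x)$, and the same identity with $\partial_\tau W_\tau^\mu$ replaced by $\partial_x^2W_\tau^\mu$. The difference is an integral over a thin parabolic neighbourhood of $(\tau,y)=(0,x)$, where $xy/(2\tau)$ is large; there the asymptotics $I_\mu(z)\sim e^z/\sqrt{2\pi z}$ show that $W_\tau^\mu(x,y)$ agrees to leading order with the Euclidean kernel $(4\pi\tau)^{-1/2}e^{-(x-y)^2/4\tau}$, the remainder and the potential term being of lower order and not contributing in the limit. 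The limit then reduces to the Gaussian computation of Ping--Stinga--Torrea, evaluating the difference to $(1-A)f(t,x)$ with $A=\tfrac1{\sqrt\pi}\int_0^1e^{-w^2/4}\,dw$, and converting the corrections $+f$ and $0$ into $+Af$ and $-(1-A)f$. The delicate points throughout are the control, uniform in $\epsilon$, of the near-diagonal singular integrals: the Hölder cancellation that justifies the $\partial_x^2$ principal value, and the precise extraction of $A$ from the Gaussian leading term of the Bessel kernel.
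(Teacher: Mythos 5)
Your scheme shares the paper's two key ideas (near the diagonal the Bessel kernel is Gaussian to leading order, and the constant $A$ is imported from the Ping--Stinga--Torrea computation), but your mechanics differ in ways that are in principle legitimate: for $\partial_t u$ the paper splits $(0,\infty)$ into $y<x/2$, $x/2<y<2x$, $y>2x$, compares with the classical kernel $W_\tau(x-y)$ only in the middle region, and lets the \emph{classical} approximate identity produce $f(t,x)$, whereas you integrate by parts globally and invoke the approximate identity of $\{W_\tau^\mu\}_{\tau>0}$ itself; for $\partial_x^2 u$ the paper splits $u$ into a Bessel-minus-Gaussian part $H$ (differentiable under the integral sign thanks to the difference estimates and the identity $\partial_x^2(W_\tau^\mu-W_\tau)=\frac{\mu^2-1/4}{x^2}W_\tau^\mu+\partial_\tau(W_\tau^\mu-W_\tau)$) and a purely Gaussian part handled as in PST, whereas you use the increment cancellation $f(t-\tau,y)-f(t-\tau,x)$ plus the same kernel identity.

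There is, however, a genuine gap: your two opening estimates are false on part of the parameter range the theorem covers. The theorem is asserted for every $\mu>-1$, but the pointwise bound $W_\tau^\mu(x,y)\le C\tau^{-1/2}e^{-c(x-y)^2/\tau}$ and the uniform mass bound $\int_0^\infty W_\tau^\mu(x,y)\,dy\le C$ hold only for $\mu\ge -1/2$. Indeed, for $-1<\mu<-1/2$ and $xy\le\tau$ the small-argument asymptotics of $I_\mu$ give $W_\tau^\mu(x,y)\approx C\,(xy)^{\mu+1/2}\tau^{-\mu-1}e^{-(x^2+y^2)/4\tau}$ with $\mu+1/2<0$, so the ratio to $\tau^{-1/2}$ is $(xy/\tau)^{\mu+1/2}$, which blows up as $xy/\tau\to0$; correspondingly the mass satisfies $\int_0^\infty W_\tau^\mu(x,y)\,dy\approx C\,(x/\sqrt{\tau}\,)^{\mu+1/2}$, unbounded as $x/\sqrt{\tau}\to0$. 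This is precisely why the paper proves absolute convergence instead from the exact identity $\int_0^\infty W_\tau^\mu(x,y)\,y^{\mu+1/2}\,dy=x^{\mu+1/2}$ together with the fact that $y^{\mu+1/2}$ is bounded below on the (compact, bounded away from $y=0$) support of $f$ --- an argument valid for all $\mu>-1$.

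The same caveat infects your later steps: the bound $\int_0^\infty|\partial_x^2W_\tau^\mu(x,y)|\,|x-y|\,dy\le C\tau^{-1/2}$, the integrability of $\partial_\tau\int_0^\infty W_\tau^\mu(x,y)\,dy$ hidden in your ``$f(t-\tau,x)$'' term, and the claim that the potential term is of lower order in the thin parabolic region are all asserted rather than proved, and for $-1<\mu\le-1/2$ they require the two-regime ($xy\le\tau$ versus $xy\ge\tau$) estimates obtained from the asymptotics of $I_\mu$, $I_{\mu+1}$, $I_{\mu+2}$ that the paper develops at length. Since the statement is pointwise in $(t,x)$, constants depending on $x$ and on the support of $f$ are admissible, so your argument can be repaired along these lines; but as written it establishes the theorem only for $\mu\ge-1/2$, not for the full range $\mu>-1$.
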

    The Bessel operator can be written as $\Delta_\mu= \delta_\mu^* \delta_\mu$, where $\delta_\mu=x^{\mu+1/2}\frac{d}{dx}x^{-\mu-1/2}$,
    and $\delta_\mu^*=x^{-\mu-1/2}\frac{d}{dx}x^{\mu+1/2}$ represents the formal adjoint of $\delta_\mu$. This decomposition of $\Delta_\mu$ suggests,
    according to Stein's ideas (\cite{StLP}), defining the Riesz transform ${\mathfrak R}_\mu$ associated with $\Delta_\mu$ by ${\mathfrak R}_\mu=\delta_\mu \Delta_\mu^{-1/2}$. The main $L^p$-boundedness
    properties of ${\mathfrak R}_\mu$ can be found in \cite{BBFMT} and \cite{BFMR}.

    We now consider the operator $L_\mu$ defined by
    $$
    (L_\mu f)(t,x)=\INT\INT W_s^\mu (x,y)f(t-s,y)dyds,
    $$
    being $f$ a measurable complex function defined on $\R\times\DO$, provided that the last integral exists. In Theorem \ref{teo1} we have established that if $f\in C^2(\R\times\DO)$ and has compact support, then $(\partial_t-\Delta_\mu)L_\mu(f)=f$. In a similar way we can see that $L_\mu((\partial_t-\Delta_\mu)f)=f$, provided that $f\in C^2(\R\times\DO)$ with compact support. Thus, $L_\mu$ can be seen as an inverse of $\partial_t-\Delta_\mu$.  Keeping in mind Stein's ideas (\cite{StLP}), we define Riesz transforms associated with the parabolic operator $\partial_t-\Delta_\mu$ as follows: for every $f\in C^2(\R\times\DO)$ with compact support,
    $$
    R_\mu(f)=\delta_{\mu+1}\delta_\mu L_\mu(f) \;\; \text{and}  \;\;\widetilde{R_\mu}(f)=\partial_t L_\mu (f).
    $$
    Note that, according to Theorem \ref{teo1}, if $f\in C^2(\R\times\DO)$ with compact support, the above definitions of $R_\mu(f)$ and $\widetilde{R_\mu}(f)$ have sense because the derivatives of $L_\mu (f)$ do exist. Moreover, we can write, for every $f\in C^2(\R\times\DO)$ with compact support,

    \begin{equation}\label{I1.4}
    R_\mu (f)(t,x)=\displaystyle\lim_{\epsilon\to 0^+}\int_{\Omega_\epsilon(t,x)}K_\mu(t,x;\tau,y)f(\tau, y) d\tau dy+f(t,x)\frac{1}{\sqrt{\pi}}\int_1^\infty e^{-s^2/4}ds, \;\; (t,x)\in \R\times\DO,
    \end{equation}
    and
        \begin{equation}\label{I1.5}
    \widetilde{R_\mu}(f)(t,x)=\displaystyle\lim_{\epsilon\to 0^+}\int_{\Omega_\epsilon(t,x)}\widetilde{K_\mu}(t,x;\tau,y)f(\tau, y)d\tau  dy+f(t,x)\frac{1}{\sqrt{\pi}}\int_0^1 e^{-s^2/4}ds,\;\; (t,x)\in \R\times\DO,
        \end{equation}
where
    \begin{align*}
    &K_\mu(t,x;\tau,y)=\delta_{{\mu+1}}\delta_\mu W_{t-\tau}^\mu (x,y) \chi_{\DO}(t-\tau), \;\; x,y\in\DO, \;\; t,\tau\in\R,\\
    &\widetilde{K_\mu}(t,x;\tau,y)=-\partial_\tau W_{t-\tau}^\mu (x,y)\chi_{\DO}(t-\tau), \;\; x,y\in\DO, \;\; t,\tau\in\R,
    \end{align*}
    and $\Omega_\epsilon(t,x)=\{(\tau,y)\in\DO\times\DO: \; \max\{|t-\tau|^{1/2},|x-y|\}>\epsilon\}$, for $\epsilon, x\in\DO$ and $t\in\DO$.

    Next we establish $L^p$-boundedness properties of the Riesz transforms. If $m$ denotes the Lebesgue measure on $\R\times\DO$ and $d$ represents the parabolic metric defined
    by $d((t,x),(\tau,y))=|t-\tau|^{1/2}+|x-y|$, $t,\tau\in\R$ and $x,y\in\DO$, the triple $(\R\times\DO, m,d)$ is a space of homogeneous type in the sense of Coifman and Weiss (\cite{CW}).
    We represent, for every $1\le p<\infty$, by $A_p^*(\R\times\DO)$ the class of Muckenhoupt weigths in the space of homogeneous type $(\R\times\DO, m,d)$. In section 2
    we recall the main definitions and results related to Calder\'on-Zygmund singular integrals on spaces of homogeneous type.

\begin{theorem}\label{Iteo2}
    \begin{enumerate}
        \item If $\mu>-1$, the Riesz transformations $R_\mu$ and $\widetilde{R_\mu}$ are bounded from $L^2(\R\times\DO)$ into itself.
        \item Suppose that $\mu>1/2$ or $\mu=-1/2$. The Riesz transformations $R_\mu$ and $\widetilde{R_\mu}$ can be extended from $L^2(\R\times\DO)\cap L^p(\R\times\DO,\omega)$ to $ L^p(\R\times\DO,\omega)$ as bounded operators from $ L^p(\R\times\DO,\omega)$
        \begin{itemize}
            \item into $ L^p(\R\times\DO,\omega)$, for every $1<p<\infty$ and $\omega\in A_p^*(\R\times\DO)$.
                \item into $ L^{1,\infty}(\R\times\DO,\omega)$, for $p=1$ and $\omega\in A_1^*(\R\times\DO)$.
        \end{itemize}
        \item If $\mu>-1/2$, the Riesz transformations $R_\mu$ and $\widetilde{R_\mu}$ can be extended from $L^2(\R\times\DO)\cap L^p(\R\times\DO)$ to $L^p(\R\times\DO)$ as bounded operators from $ L^p(\R\times\DO)$
            \begin{itemize}
                \item into $ L^p(\R\times\DO)$, for every $1<p<\infty$.
                \item into $ L^{1,\infty}(\R\times\DO)$, for $p=1$.
            \end{itemize}
            \item If $-1<\mu\le -1/2$, then the Riesz transformation $\widetilde{R_\mu}$ can be extended  from $L^2(\R\times\DO)\cap L^p(\R\times\DO)$ to $L^p(\R\times\DO)$
            as a bounded operator from $ L^p(\R\times\DO)$ into itself, provided that $-\mu-1/2<{1}/{p}<\mu+3/2$ and $1<p<\infty$.
            \item If $-1<\mu\le -1/2$, then the Riesz transformation ${R_\mu}$ can be extended  from $L^2(\R\times\DO)\cap L^p(\R\times\DO)$ to $L^p(\R\times\DO)$
            as a bounded operator from $ L^p(\R\times\DO)$ into itself, provided that $p>\frac{1}{\mu+3/2}$ and $1<p<\infty$.
    \end{enumerate}
    Moreover, when $\mu>-1/2$ in all these cases the extensions of the operators $R_\mu$ and $\widetilde{R_\mu}$ are defined by \eqref{I1.4} and \eqref{I1.5}, respectively, where the limit exist a.e. $(t,x)\in\R\times\DO$ and the equalities are understood also in a.e. $(t,x)\in\R\times\DO$.
\end{theorem}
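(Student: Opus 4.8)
The plan is to prove the $L^2$-estimate of part (1) by a spectral multiplier argument, and then to deduce parts (2)--(5) by showing that, in the appropriate ranges, $R_\mu$ and $\widetilde{R_\mu}$ are Calder\'on--Zygmund operators on the space of homogeneous type $(\R\times\DO,m,d)$ and invoking the weighted theory recalled in Section~2. For part (1) I would diagonalise simultaneously in time and space. Since $-\Delta_\mu$ is positive and self-adjoint with $h_\mu$ as spectral resolution, and since $L_\mu$ acts by convolution in $t$ against the semigroup $\{W_s^\mu\}_{s>0}$ of \eqref{I1.3}, taking the Fourier transform in $t$ and the Hankel transform $h_\mu$ in $x$ turns $L_\mu$ into multiplication by $(i\xi+\lambda)^{-1}$, where $\xi$ is dual to $t$ and $\lambda\ge 0$ is the spectral parameter of $-\Delta_\mu$. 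Hence $\widetilde{R_\mu}=\partial_t L_\mu$ becomes multiplication by $i\xi/(i\xi+\lambda)$, of modulus $\le 1$, and Plancherel in $t$ together with the $L^2(\DO)$ isometry of $h_\mu$ gives the bound. For $R_\mu=\delta_{\mu+1}\delta_\mu L_\mu$ I would write it as the composition $[\delta_{\mu+1}\delta_\mu(-\Delta_\mu)^{-1}]\circ[(-\Delta_\mu)L_\mu]$, where the second factor has bounded time multiplier $\lambda/(i\xi+\lambda)$, and where the intertwining $\delta_\mu(-\Delta_\mu)=(-\Delta_{\mu+1})\delta_\mu$ factors the first factor as $[\delta_{\mu+1}(-\Delta_{\mu+1})^{-1/2}][\delta_\mu(-\Delta_\mu)^{-1/2}]$, a composition of two Bessel Riesz transforms that are $L^2$-bounded by \cite{BBFMT,BFMR}.

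For the remaining parts I would verify the Calder\'on--Zygmund kernel conditions for $K_\mu$ and $\widetilde{K_\mu}$ with respect to $d$. A direct computation shows that the parabolic balls satisfy $m(B_d(z,r))\sim r^3$ uniformly in $z$ and $r$, so the conditions to be checked are the size estimate $|K(z,w)|\lesssim d(z,w)^{-3}$ and a H\"ormander-type smoothness estimate $|K(z,w)-K(z,w')|\lesssim d(w,w')^{\sigma}d(z,w)^{-3-\sigma}$, and symmetrically in the first variable, for some $\sigma>0$ whenever $d(w,w')\le\tfrac12 d(z,w)$. By the heat equation $\widetilde{K_\mu}=(\Delta_\mu)_xW_{t-\tau}^\mu\,\chi_{\DO}(t-\tau)$, while $K_\mu$ involves $\delta_{\mu+1}\delta_\mu$ applied to $W_{t-\tau}^\mu$; both reduce to pointwise bounds for $W_s^\mu(x,y)$ and its $s$- and $x$-derivatives, which I would extract from \eqref{I1.3} and the asymptotics $I_\mu(z)\sim z^{\mu}$ as $z\to0^+$ and $I_\mu(z)\sim z^{-1/2}e^{z}$ as $z\to\infty$. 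Splitting into the local region, where $xy/s$ is large and the exponentials combine into the Gaussian $e^{-(x-y)^2/4s}$, and the global region, where $xy/s$ is small and $W_s^\mu(x,y)\sim s^{-1/2}(xy/s)^{\mu+1/2}e^{-(x^2+y^2)/4s}$, I would estimate the kernels in each regime.

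The decisive point, and the source of the case distinction, is the factor $(xy)^{\mu+1/2}$ produced in the global region near the boundary. When $\mu>1/2$ (and in the reduced case $\mu=-1/2$, where $\Delta_{-1/2}=\partial_x^2$ and $W_s^{-1/2}$ is the reflected half-line heat kernel, so the classical parabolic estimates of \cite{PST} apply to the local part) this factor is harmless, the full size and smoothness conditions hold, and $R_\mu,\widetilde{R_\mu}$ are genuine Calder\'on--Zygmund operators; the $A_p^*$ theory of Section~2 then yields the weighted $L^p$ and weak-$(1,1)$ bounds of part~(2). For $-1/2<\mu\le 1/2$ the local part is still Calder\'on--Zygmund, but the global part must be treated as a separate positive integral operator; for every such $\mu$ it is bounded on each $L^p(\R\times\DO)$, $1<p<\infty$, which gives part~(3), whereas the stronger boundary decay required for arbitrary $A_p^*$-weights is only available when $\mu>1/2$, explaining why part~(2) is more restrictive. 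For parts~(4) and~(5), with $-1<\mu\le-1/2$, the exponent of $(xy)^{\mu+1/2}$ is negative and the global part is no longer a standard kernel: near the boundary it behaves like a kernel homogeneous of degree $-1$, so its $L^p$-boundedness is characterised, via the Hardy--Littlewood--P\'olya criterion, by the finiteness of $\INT|k(1,y)|\,y^{-1/p}\,dy$; evaluating this integral produces the ranges $-\mu-1/2<1/p<\mu+3/2$ for $\widetilde{R_\mu}$ and $1/p<\mu+3/2$ for $R_\mu$, the extra spatial derivatives $\delta_{\mu+1}\delta_\mu$ shifting the boundary homogeneity and accounting for the one-sided condition of part~(5).

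The main obstacle, throughout, is the sharp analysis of the kernels near the spatial boundary: obtaining the size and smoothness estimates for $\delta_{\mu+1}\delta_\mu W_s^\mu$ and $\partial_s W_s^\mu$ uniformly in the global region, and, in the range $-1<\mu\le-1/2$, pinning down the exact homogeneous-kernel computation that produces the stated $p$-intervals. Finally, the a.e.\ existence of the principal-value limits defining the extensions for $\mu>-1/2$ in \eqref{I1.4} and \eqref{I1.5}, together with the a.e.\ identities, would follow once the kernel estimates are in hand from the general convergence results for Calder\'on--Zygmund operators on spaces of homogeneous type, using the density of the $C^2(\R\times\DO)$ functions with compact support and the weak-type bound for the maximal truncated operator.
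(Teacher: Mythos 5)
Your plan for parts (1) and (2) is essentially the paper's own proof: the $L^2$ bound comes from the bounded multipliers $-i\rho/(z^2+i\rho)$ and $z^2/(z^2+i\rho)$ after conjugating with $\mathcal{F}$ and the Hankel transforms (the paper reads $\delta_{\mu+1}\delta_\mu$ directly as $h_{\mu+2}(z^2\,\cdot)$ rather than factoring through two Bessel Riesz transforms, but this is the same computation), and part (2) is proved exactly as you propose, by verifying the standard size and smoothness estimates for $K_\mu$ and $\widetilde{K_\mu}$ on $(\R\times\DO,m,d)$ and quoting the weighted Calder\'on--Zygmund theorem; the restriction $\mu>1/2$ or $\mu=-1/2$ indeed comes from the term carrying the factor $(\mu+1/2)\,x^{\mu+5/2}y^{\mu-1/2}s^{-\mu-3}$ in $\partial_y\mathbb{K}_\mu$, as you say.

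For parts (3)--(5) you take a genuinely different route. The paper does not split local/global and argue directly on the kernel: for part (3) it writes $\widetilde{R_\mu}(f)-\widetilde{R}(f_0)$ and $R_\mu(f)-R(f_0)$ (with $f_0$ the reflected extension and $R,\widetilde{R}$ the classical parabolic transforms), proves that the difference kernels satisfy uniform Schur-type $L^1$ bounds, interpolates, and imports all the hard boundedness (including weak $(1,1)$ and the maximal truncations) from \cite[Theorem 2.3, (B)]{PST}; for parts (4)--(5) it uses the Hankel transplantation operators $S_\mu=h_\mu h_{\mu+2}$, the identity $\widetilde{R_\mu}=S_\mu\widetilde{R_{\mu+2}}S_\mu^*$ and the weighted transplantation theorem of \cite{NS} to get part (4), and then duality ($R_\mu^*$ conjugated by $S_\mu$) to get part (5). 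Your alternative --- local part is Calder\'on--Zygmund for all $\mu>-1$, global part dominated by kernels homogeneous of degree $-1$ near the boundary, with the Hardy--Littlewood--P\'olya integral $\INT|k(1,y)|y^{-1/p}dy$ producing exactly $-\mu-1/2<1/p<\mu+3/2$ for the symmetric kernel of $\widetilde{R_\mu}$ and the one-sided condition $1/p<\mu+3/2$ for the non-symmetric kernel of $R_\mu$ --- does give the correct ranges (the time-integrated global kernel is comparable to $(xy)^{\mu+1/2}(x^2+y^2)^{-\mu-1}$, resp.\ its $x$-differentiated analogue), and it has the merit of being self-contained and of making the origin of the pencil ranges transparent, whereas the paper's argument is shorter given \cite{PST} and \cite{NS} and avoids any smoothness analysis of the kernel for $-1<\mu\le-1/2$.

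Two points in your sketch need repair. First, part (3) includes the weak $(1,1)$ estimate, and "bounded on each $L^p$, $1<p<\infty$" for the global part does not yield it: you need the global positive operator to be bounded at $p=1$, i.e.\ a Schur test in \emph{both} variables. For $\widetilde{R_\mu}$ this is free by symmetry of the kernel, but $R_\mu$ is not self-adjoint, and the adjoint-side integral $\sup_y\INT\INT|\delta_{\mu+1}\delta_\mu W_s^\mu(x,y)|\,dx\,ds$ converges precisely when $\mu>-1/2$; this separate computation (the paper's $M_\mu$ estimates) is where the restriction in part (3) actually bites, and it cannot be skipped. Second, the rough cutoff $\chi_{\{x/2<y<2x\}}$ defining your local part destroys the H\"ormander condition across the cut, so you must additionally observe that the kernel restricted to a neighbourhood of the lines $y=x/2$, $y=2x$ is itself Schur-integrable (it is, since there $|x-y|\sim x$), and that the local part inherits $L^2$-boundedness from part (1) minus the global part before the Calder\'on--Zygmund theorem can be applied to it. Both repairs are routine but are genuinely part of the proof.
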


    $L^p$-boundedness properties for the Riesz transforms established in Theorem \ref{Iteo2} can be seen as Sobolev estimates in our parabolic Bessel setting.
    Note that the auxiliar operator $\delta_\mu$ plays the role of derivatives to define correct Sobolev spaces in the Bessel setting (see \cite{BFRTT}).
    On the other hand, $(4)$ and $(5)$ in Theorem \ref{Iteo2} remember the so called pencil phenomenon that appears related to the $L^p$-boundedness properties
    of harmonic analysis operators in Laguerre settings (see \cite{HSTV}, \cite{MST1}, \cite{MST2}, and \cite{NSj}).

    As an application of vector-valued Calder\'on-Zygmund theory (see \cite{RT2}), we establish the following mixed weighted norm inequalities for Riesz transforms $R_\mu$ and $\widetilde{R_\mu}$.
    For every $1\le p<\infty$, we denote the classical classes of Muckenhoupt weights by $A_p(\Omega)$, where $\Omega=\DO$ or $\Omega=\R$.

    \begin{theorem}\label{Iteo3}
        Assume that $\mu>1/2$ or $\mu=-1/2$. If $1<p<\infty$ and $v\in A_p(\DO)$, then the Riesz transforms $R_\mu$ and $\widetilde{R_\mu}$ can
        be extended from $L^2(\R\times\DO)\cap L^q(\R,u,L^p(\DO, v))$ to $L^q(\R,u,L^p(\DO, v))$ as a bounded operator from $L^q(\R,u,L^p(\DO, v))$ into itself,
        provided that $1<q<\infty$ and $u\in A_q(\R)$; and, for every $u\in A_1(\R)$, from $L^2(\R\times\DO)\cap L^1(\R,u,L^p(\DO, v))$ to $L^1(\R,u,L^p(\DO, v))$ as a bounded operator from $L^1(\R,u,L^p(\DO, v))$
        into $L^{1,\infty}(\R,u,L^p(\DO, v))$.
    \end{theorem}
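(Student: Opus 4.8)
The plan is to regard $R_\mu$ and $\widetilde{R_\mu}$ as operator-valued Calder\'on-Zygmund operators acting in the time variable $t\in\R$, with values in the Banach space $B=L^p(\DO,v)$, and then to invoke the vector-valued Calder\'on-Zygmund theory of \cite{RT2}. The scheme requires two ingredients: a base boundedness on $L^{p}(\R,B)$ for a single exponent, and operator-valued size and smoothness (H\"ormander) estimates for the kernels, measured in the operator norm of $\mathcal{L}(B)$. Granting these, the theorem of \cite{RT2} yields boundedness on $L^q(\R,u,B)$ for all $1<q<\infty$ and $u\in A_q(\R)$, together with the weak-type endpoint for $u\in A_1(\R)$, which is exactly the asserted conclusion.

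First I would produce the base estimate. Observe that a weight of the form $\omega(t,x)=v(x)$, independent of $t$, belongs to $A_p^*(\R\times\DO)$ precisely when $v\in A_p(\DO)$: after integrating the trivial $t$-variable, the parabolic $A_p^*$-average over a $d$-ball of radius $r$ reduces, up to fixed constants coming from the comparison of the parabolic section with an interval, to the ordinary $A_p(\DO)$-average over the $x$-projection. Consequently, for $\mu>1/2$ or $\mu=-1/2$, Theorem \ref{Iteo2}(2) applied with $\omega(t,x)=v(x)$ gives that $R_\mu$ and $\widetilde{R_\mu}$ are bounded on
\[
L^p(\R\times\DO,\omega)=L^p(\R,L^p(\DO,v))=L^p(\R,B),
\]
which is the required strong $(p,p)$ vector-valued estimate that launches the Calder\'on-Zygmund machinery in time.

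Next I would verify the kernel estimates. Since $\partial_s W_s^\mu=\Delta_\mu W_s^\mu$, the operator-valued kernels are, for $t>\tau$, $\widetilde{\mathcal{K}}(t,\tau)=\Delta_\mu W_{t-\tau}^\mu$ for $\widetilde{R_\mu}$ and $\mathcal{K}(t,\tau)=\delta_{\mu+1}\delta_\mu W_{t-\tau}^\mu$ for $R_\mu$ (both vanishing for $t<\tau$); each is of convolution type in $t$, depending only on $s=t-\tau$. It therefore suffices to establish, uniformly in $s>0$, the $\mathcal{L}(B)$-bounds
\[
\big\|s\,\Delta_\mu W_s^\mu\big\|_{\mathcal{L}(B)}+\big\|s\,\delta_{\mu+1}\delta_\mu W_s^\mu\big\|_{\mathcal{L}(B)}\le C,
\qquad
\big\|s^2\,\Delta_\mu^2 W_s^\mu\big\|_{\mathcal{L}(B)}+\big\|s^2\,\delta_{\mu+1}\delta_\mu\Delta_\mu W_s^\mu\big\|_{\mathcal{L}(B)}\le C.
\]
The first pair gives the size condition $\|\widetilde{\mathcal{K}}(t,\tau)\|_{\mathcal{L}(B)}\le C/|t-\tau|$; the second pair, combined with
\[
\widetilde{\mathcal{K}}(t,\tau)-\widetilde{\mathcal{K}}(t,\tau')=\int_{\tau}^{\tau'}\partial_\sigma\widetilde{\mathcal{K}}(t,\sigma)\,d\sigma
\qquad\text{and}\qquad
\partial_\sigma\widetilde{\mathcal{K}}(t,\sigma)=-\Delta_\mu^2 W_{t-\sigma}^\mu,
\]
yields the smoothness condition: integrating $\|s^2\Delta_\mu^2 W_s^\mu\|_{\mathcal{L}(B)}\le C$ over $|t-\tau|>2|\tau-\tau'|$ produces the H\"ormander integral $\int_{|t-\tau|>2|\tau-\tau'|}\|\widetilde{\mathcal{K}}(t,\tau)-\widetilde{\mathcal{K}}(t,\tau')\|_{\mathcal{L}(B)}\,dt\le C$, and the symmetric condition in the first variable follows from the convolution structure. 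The $R_\mu$ case is identical, with $\delta_{\mu+1}\delta_\mu$ in place of $\Delta_\mu$.

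The main obstacle is precisely the verification of these uniform weighted operator-norm bounds on $B=L^p(\DO,v)$. For each fixed $s$ the operators $s^k\Delta_\mu^k W_s^\mu$ (and their $\delta$-variants) are integral operators in $x$ whose kernels, after the parabolic rescaling $x\mapsto x/\sqrt{s}$, should satisfy the standard size and smoothness conditions of Calder\'on-Zygmund theory on the space of homogeneous type $(\DO,dx)$ with constants independent of $s$; this is where one needs the sharp Gaussian-type pointwise estimates for $W_s^\mu(x,y)$ together with its $x$- and $s$-derivatives, and where the restriction $\mu>1/2$ or $\mu=-1/2$ enters, exactly as in Theorem \ref{Iteo2}(2), to guarantee the required decay and cancellation of the Bessel kernel. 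Once these $s$-uniform kernel estimates are in place, the weighted $L^p(\DO,v)$ bounds follow for every $v\in A_p(\DO)$ with constant depending only on the $A_p$-characteristic of $v$, and the vector-valued Calder\'on-Zygmund theorem of \cite{RT2} delivers both the strong $L^q(\R,u,B)$ bounds for $u\in A_q(\R)$ and the weak-type $(1,1)$ endpoint for $u\in A_1(\R)$.
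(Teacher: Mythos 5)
Your proposal follows essentially the same route as the paper's proof: the base case is obtained exactly as you describe, by applying Theorem \ref{Iteo2}(2) to the $t$-independent weight $W(t,x)=v(x)\in A_p^*(\R\times\DO)$, and the operator-valued size and smoothness bounds in $\mathcal{L}(L^p(\DO,v))$ are verified, as you propose, by treating the fixed-$s$ spatial operators (and their $s$-derivatives, via the convolution structure and the mean value theorem) as scalar Calder\'on--Zygmund operators on $\DO$ with constants $C/s$ and $C/s^2$, before invoking the vector-valued theory of \cite{RT2} in the time variable. The only difference is one of detail: where you appeal to parabolic rescaling to get the $s$-uniform kernel conditions, the paper derives them directly from the Bessel asymptotics (\ref{P1})--(\ref{P3}) together with a Cauchy integral formula argument on a holomorphic extension in $s$, which is precisely where the restriction $\mu>1/2$ or $\mu=-1/2$ enters.
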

    Note that from Theorem \ref{Iteo3} we can deduce that, if $\mu>1/2$ or $\mu=-1/2$, $R_\mu$ and $\widetilde{R_\mu}$
    define bounded operators from $L^p(\R\times\DO,uv)$ into itself, for every $1<p<\infty$, $u\in A_p(\R)$ and $v\in A_p(\DO)$. Moreover, $uv\in A_p^*(\R\times\DO)$ provided that $u\in A_p(\R)$ and $v\in A_p(\DO)$, but $A_p^*(\R\times\DO)\neq A_p(\R)\cdot A_p(\DO)$, when $1<p<\infty$. Hence, Theorem \ref{Iteo2} (2) is not a special case of strong type results in Theorem \ref{Iteo3}.

    We now consider the following Cauchy problem associated with \eqref{BP}:

    \begin{equation}\label{I1.6}
    \left\{ \begin{array}{c}
    \partial_t u(t,x)=\Delta_\mu u(t,x)+f(t,x), \;\; (t,x)\in\DO \times \DO, \\
    u(0,x)=g(x), \;\; x\in \DO.
        \end{array} \right.
    \end{equation}
%
%
%

\begin{theorem}\label{Iteo4}
    Let $\mu>-1$. Assume that $f\in L^\infty(\DO\times\DO)$ with compact support and $g\in L^\infty(\DO)$ with compact support. We define
        \begin{equation}\label{I1.7}
        u(t,x)=\int_0^t\INT W_\tau^\mu(x,y)f(t-\tau,y) dy d\tau+\INT W_t^\mu(x,y)g(y) dy , \;\; t,x\in\DO.
        \end{equation}
        Then, the last integrals are absolutely convergent for every $t,x\in\DO$. Moreover, if  $f$ is also in $C^2(\DO\times\DO)$, then the function $u$ defined by \eqref{I1.7} is a classical solution of \eqref{I1.6} and
            \begin{align*}
        \frac{\partial u (t,x)}{\partial t}&=\displaystyle\lim_{\epsilon\to 0^+}\int_0^{t-\epsilon}\INT\frac{\partial}{\partial \tau}W_\tau^\mu(x,y)f(t-\tau,y)dyd\tau+\INT\frac{\partial}{\partial \tau}W_t^\mu(x,y)g(y)dy, \;\;
        t,x\in\DO,
        \end{align*}
        and
        $$
        \frac{\partial^2 u (t,x)}{\partial x^2}=\displaystyle\lim_{\epsilon\to 0^+}\int_0^{t-\epsilon}\INT\frac{\partial^2}{\partial x^2}W_\tau^\mu(x,y)f(t-\tau,y)dyd\tau+\INT\frac{\partial^2}{\partial x^2}W_t^\mu(x,y) g(y)dy, t,x\in\DO.
        $$

\end{theorem}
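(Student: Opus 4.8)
The plan is to split $u=v+w$, where $v(t,x)=\int_0^t\INT W_\tau^\mu(x,y)f(t-\tau,y)\,dy\,d\tau$ carries the source and $w(t,x)=\INT W_t^\mu(x,y)g(y)\,dy$ carries the initial datum, and to treat the two pieces by entirely different mechanisms. Absolute convergence is the easy part: since $\mu>-1$ we have $I_\mu>0$, so $W_\tau^\mu(x,y)\ge0$, and the heat kernel has uniformly bounded mass $\INT W_\tau^\mu(x,y)\,dy\le C$; hence $\int_0^t\INT W_\tau^\mu(x,y)|f(t-\tau,y)|\,dy\,d\tau\le Ct\|f\|_\infty$ and $\INT W_t^\mu(x,y)|g(y)|\,dy\le C\|g\|_\infty$, both finite for every $t,x\in\DO$.

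For the source term $v$ the key observation is that one can reduce everything to Theorem \ref{teo1}. Because $f$ has compact support in $\DO\times\DO$, its support is bounded away from $t=0$, so the extension $\tilde f$ of $f$ by zero to $\R\times\DO$ still lies in $C^2(\R\times\DO)$ and has compact support. Since $\tilde f(t-\tau,y)=0$ whenever $\tau\ge t$, one has $v(t,x)=\INT\INT W_\tau^\mu(x,y)\tilde f(t-\tau,y)\,dy\,d\tau=L_\mu(\tilde f)(t,x)$, so Theorem \ref{teo1} applies verbatim: it gives $(\partial_t-\Delta_\mu)v=\tilde f=f$ on $\DO\times\DO$ and supplies the derivative formulas. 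In the expression $\lim_{\epsilon\to0^+}\int_\epsilon^\infty\INT\partial_\tau W_\tau^\mu(x,y)\tilde f(t-\tau,y)\,dy\,d\tau+\tilde f(t,x)$ the upper limit collapses to $t$ and $\tilde f(t,x)=f(t,x)$, yielding the stated limit in $\tau$ together with the boundary contribution $f(t,x)$ coming from $\tau\to0^+$; the analogous (boundary-term-free) formula for $\partial_x^2 v$ follows the same way.

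For the initial-datum term $w$ I would argue by semigroup theory. For $t>0$ the kernel $W_t^\mu(x,y)$ and all its $x$- and $t$-derivatives are smooth and decay rapidly in $y$, so differentiation under the integral sign is legitimate and gives $\partial_t w(t,x)=\INT\partial_t W_t^\mu(x,y)g(y)\,dy$ and $\partial_x^2 w(t,x)=\INT\partial_x^2 W_t^\mu(x,y)g(y)\,dy$. As $\{W_t^\mu\}$ is the heat semigroup of $\Delta_\mu$, the kernel satisfies $\partial_t W_t^\mu(x,y)=\Delta_{\mu,x}W_t^\mu(x,y)$, whence $\partial_t w=\Delta_\mu w$; this is exactly the identity used to pass between the $\partial_t$ and $\partial_x^2$ forms, combining $\partial_x^2 w$ with the potential term $(\tfrac14-\mu^2)x^{-2}w$ to reconstruct $\Delta_\mu w$. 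The initial condition is then checked by observing that $v(t,x)\to0$ as $t\to0^+$ (integration over a shrinking interval of a bounded integrand), while $w(t,x)=W_t^\mu(g)(x)\to g(x)$ by strong continuity of the semigroup, so $u(0,x)=g(x)$; adding the two pieces produces $\partial_t u=\Delta_\mu u+f$ and the displayed representations, with continuity of the derivatives inherited from the two representations.

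The main obstacle is the rigorous treatment of the source term at the kernel singularity $\tau\to0^+$: the inner integral $\INT\partial_\tau W_\tau^\mu(x,y)f(t-\tau,y)\,dy$ is only conditionally convergent there, the $\tau^{-3/2}$ growth of $\partial_\tau W_\tau^\mu$ being cancelled only after integration in $y$, via $\partial_\tau W_\tau^\mu=\Delta_{\mu,x}W_\tau^\mu$ and the heat-semigroup identity $\Delta_{\mu,x}W_\tau^\mu(f(t-\tau,\cdot))\to\Delta_\mu f(t,\cdot)$. It is precisely this limit, together with the boundary term producing $f(t,x)$, that is packaged inside Theorem \ref{teo1}, so the entire analytic difficulty is offloaded onto the zero-extension argument; what remains is the routine bookkeeping of the boundary contributions at $\tau\to0^+$ and at $\tau=t$ (the latter vanishing because $f(0,\cdot)\equiv0$).
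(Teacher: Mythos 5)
Your decomposition $u=v+w$ is the same as the paper's ($u_1+u_2$), but your treatment of the source term is genuinely different and more economical: the paper re-runs the Section 2 computation in the Cauchy setting (parametric differentiation in $t$, splitting off the classical kernel $W_\tau(x-y)$, integration by parts in $\tau$, and the approximate-identity boundary term), whereas you note that ${\rm supp}\,f$ is compact in $\DO\times\DO$, hence bounded away from $t=0$, so the zero extension $\tilde f$ is still $C^2$ with compact support in $\R\times\DO$ and $v=L_\mu(\tilde f)$; Theorem \ref{teo1} then delivers the PDE for $v$ and both derivative representations at once. This reduction is correct. Note also that the formula it produces, $\partial_t v=\lim_{\epsilon\to0^+}\int_\epsilon^t\INT\partial_\tau W_\tau^\mu(x,y)f(t-\tau,y)\,dy\,d\tau+f(t,x)$, is exactly what the paper's own proof derives for $u_1$; the display in the statement of Theorem \ref{Iteo4} (truncation $\int_0^{t-\epsilon}$, no $f(t,x)$ term) cannot coincide with it, since the inner $y$-integral is bounded near $\tau=0$ (write $\partial_\tau W_\tau^\mu=\Delta_\mu W_\tau^\mu$ and move $\Delta_\mu$ onto $f$) and $f(t-\tau,\cdot)\equiv0$ for $\tau$ near $t$, so both truncations give the same limit; the missing $f(t,x)$, which Duhamel's formula forces, is a misprint in the statement, and your version is the correct one.

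There are, however, two genuine gaps. First, your absolute-convergence step rests on the claim $\INT W_\tau^\mu(x,y)\,dy\le C$ uniformly, which is false for $-1<\mu<-1/2$: by \eqref{P1}, $W_\tau^\mu(x,y)\simeq C(xy)^{\mu+1/2}\tau^{-\mu-1}e^{-(x^2+y^2)/(4\tau)}$ in the regime $xy\le\tau$, and integrating in $y\in(0,\sqrt{\tau})$ for $x\le\sqrt{\tau}$ gives a contribution of order $x^{\mu+1/2}\tau^{-(\mu+1/2)/2}$, which blows up as $x\to0^+$. Since the theorem is asserted for all $\mu>-1$, your argument breaks on precisely this range; the repair is the paper's device (used also in the proof of Theorem \ref{teo1}): on the compact supports of $f$ and $g$ one has $y^{\mu+1/2}\ge c>0$, and $\INT W_\tau^\mu(x,y)y^{\mu+1/2}\,dy=x^{\mu+1/2}$, or invoke \eqref{V1} directly. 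Second, the initial condition: strong continuity of $\{W_t^\mu\}_{t>0}$ gives $W_t^\mu g\to g$ in $L^2(\DO)$ only, which does not yield the pointwise statement $\lim_{t\to0^+}w(t,x)=g(x)$ that "$u(0,x)=g(x)$" requires for a merely bounded datum. The paper obtains a.e. convergence (and everywhere convergence when $g$ is continuous) from \cite[Theorem 2.1]{BHNV}; you need that result, or an approximate-identity argument at Lebesgue points of $g$. A smaller point of the same kind: differentiation under the integral sign in $w$ should be backed by the kernel derivative estimates \eqref{T3} and \eqref{A5bis}, which is exactly what the paper cites at that step.
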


For every $f\in C^2(\DO\times\DO)$  with compact support we define
    \begin{equation}\label{I1.8}
    {\bf R}_\mu (f)(t,x)=\displaystyle\lim_{\epsilon\to 0^+}\int_0^{t-\epsilon}\INT\delta_{{\mu+1}}\delta_\mu W_{\tau}^\mu (x,y)f(t-\tau, y) dy d\tau, \;\; t,x\in\DO,
    \end{equation}
    and
    \begin{equation}\label{I1.9}
    \widetilde{{\bf R}_\mu}(f)(t,x)=\displaystyle\lim_{\epsilon\to 0^+}\int_0^{t-\epsilon}\INT\partial_\tau W_{\tau}^\mu (x,y)f(t-\tau, y) dyd\tau, \;\; t,x\in\DO.
    \end{equation}
    Note that the above limits do exist.

\begin{theorem}\label{Iteo5}

    \begin{enumerate}
\item Suppose that $\mu>1/2$ or $\mu=-1/2$. The Riesz transformations
${\bf R}_\mu$ and $\widetilde{{\bf R}_\mu}$ can be extended to $
L^p(\DO\times\DO,\omega)$ as bounded operators from $
L^p(\DO\times\DO,\omega)$
        \begin{itemize}
            \item into $ L^p(\DO\times\DO,\omega)$, for every $1<p<\infty$ and $\omega\in A_p^*(\R\times\DO)$.
                \item into $ L^{1,\infty}(\DO\times\DO,\omega)$, for $p=1$ and $\omega\in A_1^*(\R\times\DO)$.
        \end{itemize}
        \item If $\mu>-1/2$, the Riesz transformations ${\bf R}_\mu$ and $\widetilde{{\bf R}_\mu}$ can be extended  to $L^p(\DO\times\DO)$ as bounded operators from $ L^p(\DO\times\DO)$
            \begin{itemize}
                \item into $ L^p(\DO\times\DO)$, for every $1<p<\infty$.
                \item into $ L^{1,\infty}(\DO\times\DO)$, for $p=1$.
            \end{itemize}
\end{enumerate}
    Moreover, the extensions of $    {\bf R}_\mu$ and $\widetilde{{\bf R}_\mu}$ to $L^p(\DO\times\DO, \omega)$ are defined as the principal value integral operators in \eqref{I1.8} and \eqref{I1.9},
    respectively, where the limits exist a.e. $(t,x)\in \DO\times\DO$.
\end{theorem}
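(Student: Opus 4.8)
The plan is to deduce Theorem~\ref{Iteo5} from the whole-space result Theorem~\ref{Iteo2} by exploiting the causal structure of the kernels. The crucial observation is that both $K_\mu(t,x;\tau,y)$ and $\widetilde{K_\mu}(t,x;\tau,y)$ carry the factor $\chi_{\DO}(t-\tau)$, so $R_\mu(g)(t,x)$ and $\widetilde{R_\mu}(g)(t,x)$ depend on the values of $g$ only at source times $\tau<t$. Hence, given $f\in C^2(\DO\times\DO)$ with compact support, let $\tilde f$ denote its extension to $\R\times\DO$ that vanishes on $(-\infty,0]\times\DO$. For $(t,x)\in\DO\times\DO$, causality together with the support of $\tilde f$ forces the region of integration in $R_\mu(\tilde f)(t,x)$ to reduce to the source times lying in $(0,t)$; after the change of variables relating the source time to the heat-time variable, this is precisely the range of integration in the definition \eqref{I1.8} of ${\bf R}_\mu(f)(t,x)$, and the analogous matching holds between $\widetilde{R_\mu}$ in \eqref{I1.5} and $\widetilde{{\bf R}_\mu}$ in \eqref{I1.9}.

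First I would make this precise by proving, for every such $f$ and a.e. $(t,x)\in\DO\times\DO$, pointwise identities of the form
\begin{align*}
{\bf R}_\mu(f)(t,x)&=R_\mu(\tilde f)(t,x)+c_1\,f(t,x),\\
\widetilde{{\bf R}_\mu}(f)(t,x)&=\widetilde{R_\mu}(\tilde f)(t,x)+c_2\,f(t,x),
\end{align*}
for suitable absolute constants $c_1,c_2$. The additive multiplication terms account for the discrepancy between the two principal-value truncations: the whole-space operators excise the parabolic box $\{|t-\tau|^{1/2}\le\epsilon,\ |x-y|\le\epsilon\}$ around the diagonal and carry the explicit local terms displayed in \eqref{I1.4} and \eqref{I1.5}, whereas the truncation in \eqref{I1.8} and \eqref{I1.9} does not remove the diagonal. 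By the continuity of $f$ and the mean-value behaviour of $\delta_{\mu+1}\delta_\mu W_\tau^\mu(x,y)$ and $\partial_\tau W_\tau^\mu(x,y)$ as $\tau\to0^+$, the integral over the excised box converges to a constant multiple of $f(t,x)$; combining this with the local terms already present in \eqref{I1.4} and \eqref{I1.5} produces $c_1$ and $c_2$.

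Once these identities are established, the boundedness assertions transfer at once. Since the inclusion $\DO\times\DO\subset\R\times\DO$ makes restriction norm-nonincreasing and $\|\tilde f\|_{L^p(\R\times\DO,\omega)}=\|f\|_{L^p(\DO\times\DO,\omega)}$ for every $\omega\in A_p^*(\R\times\DO)$, the strong $(p,p)$ bounds of $R_\mu$ and $\widetilde{R_\mu}$ from Theorem~\ref{Iteo2} (2) and (3) give
$$
\|{\bf R}_\mu(f)\|_{L^p(\DO\times\DO,\omega)}\le\|R_\mu(\tilde f)\|_{L^p(\R\times\DO,\omega)}+|c_1|\,\|f\|_{L^p(\DO\times\DO,\omega)}\le C\|f\|_{L^p(\DO\times\DO,\omega)},
$$
and likewise for $\widetilde{{\bf R}_\mu}$, since the term $c_i f$ is a bounded multiplication operator. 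A Chebyshev estimate for $c_i f$ combined with the weak $(1,1)$ bounds of Theorem~\ref{Iteo2} yields the endpoint assertions when $p=1$. The extension to all of $L^p(\DO\times\DO,\omega)$ then follows by density of the compactly supported $C^2(\DO\times\DO)$-functions, and the a.e. existence of the limits in \eqref{I1.8} and \eqref{I1.9} representing the extended operators is inherited from the corresponding a.e. statement in Theorem~\ref{Iteo2}, read on $\DO\times\DO$.

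The hard part will be the first step: verifying the pointwise identities, matching the parabolic regions $\Omega_\epsilon(t,x)$ used for the whole-space operators with the truncation of the Cauchy operators, and computing the local constants $c_1,c_2$. This rests on a careful analysis of $\delta_{\mu+1}\delta_\mu W_\tau^\mu(x,y)$ and $\partial_\tau W_\tau^\mu(x,y)$ near the parabolic diagonal (the heat-time tending to $0^+$; the endpoint corresponding to the initial time is harmless because $f$ has compact support in $\DO\times\DO$), which is exactly the cancellation computation that produced the constant $A$ in Theorem~\ref{teo1}. The remaining steps are a soft transfer of the Calder\'on-Zygmund estimates already established over $\R\times\DO$.
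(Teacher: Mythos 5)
Your overall strategy --- extend $f$ by zero in time, use the causality built into the kernels, and transfer everything from the whole-space operators of Theorem \ref{Iteo2} --- is exactly the route the paper takes, and the norm-boundedness half of your plan is sound. The pointwise identities ${\bf R}_\mu(f)=R_\mu(\tilde f)+c_1 f$ and $\widetilde{{\bf R}_\mu}(f)=\widetilde{R_\mu}(\tilde f)+c_2 f$ do hold for $f\in C^2(\DO\times\DO)$ with compact support, with constants read off from the two representations of the derivatives in Theorem \ref{teo1} (the time truncation and the parabolic truncation differ by a fixed multiple of $f$), and together with $\|\tilde f\|_{L^p(\R\times\DO,\omega)}=\|f\|_{L^p(\DO\times\DO,\omega)}$ and the bounds in Theorem \ref{Iteo2}, (2) and (3), they give the strong and weak estimates on a dense class, hence bounded extensions.

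The genuine gap is in your final sentence: the a.e.\ existence of the limits in \eqref{I1.8} and \eqref{I1.9} for \emph{arbitrary} $f\in L^p(\DO\times\DO,\omega)$ is not ``inherited'' from Theorem \ref{Iteo2}. That theorem gives a.e.\ convergence of the truncations over the parabolic regions $\Omega_\epsilon(t,x)$, whereas \eqref{I1.8}--\eqref{I1.9} use a different truncation (heat time bounded below, equivalently source time $<t-\epsilon$). Knowing that one family of truncations converges a.e.\ for all $f\in L^p$, and that the difference of the two families converges (to $c_if$) for $f$ in a dense class, does \emph{not} imply a.e.\ convergence of the second family for general $L^p$ data: passing a.e.\ convergence through a density argument requires a weak-type bound for the maximal operator of the difference, uniform in $\epsilon$. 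This is precisely where the paper's proof does its real work: using $|\delta_{\mu+1}\delta_\mu W_\tau^\mu(x,y)|+|\partial_\tau W_\tau^\mu(x,y)|\le C\tau^{-3/2}e^{-c(x-y)^2/\tau}$ and the argument of \cite[page 11]{PST}, it shows that
\begin{equation*}
T_*(f)(t,x)=\sup_{0<\epsilon<\sqrt{t}}\left|\left(\int_{\Omega_\epsilon(x)}-\int_{\epsilon^2}^{t}\INT\right)\delta_{\mu+1}\delta_\mu W_\tau^\mu(x,y)\,f(t-\tau,y)\,d\tau\,dy\right|\le C\,\mathcal{M}(f)(t,x),
\end{equation*}
where $\mathcal{M}$ is the parabolic Hardy--Littlewood maximal function (and similarly for $\partial_\tau W_\tau^\mu$), so that the weighted boundedness of $\mathcal{M}$ on the homogeneous-type space both upgrades the dense-class convergence to a.e.\ convergence for every $f$ and re-proves the norm bounds. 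Your proposal needs this maximal comparison (or an equivalent estimate) added; without it, the ``Moreover'' clause of the theorem --- and with it the identification of the weak $(1,1)$ extension with the principal value operator --- remains unproven.
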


Suppose that $X$ is a Banach space and $A: D(A)\subset X\to X$ is an operator. If $1<p<\infty$, we say that $A$ has maximal $L^p$- regularity when there exists a constant $C>0$ such that, for every $f\in L^p(\DO,X)$ there exists a unique $u_f\in L^p(\DO,D(A))$ solution of the Cauchy problem
    \begin{equation}\label{I1.10}
    \left\{ \begin{array}{c}
    \frac{\partial}{\partial t} u(t)+ A u(t)=f(t), \;\; t\in\DO , \\
    u(0)=0\hspace{4cm}
    \end{array} \right.
    \end{equation}
satisfying
$$
\left\|\frac{\partial}{\partial t}u_f \right\|_{ L^p(\DO,X)}+\|Au_f\|_{ L^p(\DO,X)}\le C\|f\|_{ L^p(\DO,X)}.
$$

If the operator $-A$ generates a semigroup $\{T_t \}_{t\ge 0}$ of operators on $X$, the solution of \eqref{I1.10} can be written as
$$
u(t)=\int_0^t T_{t-s}(f(s))ds, \;\; t\ge 0,
$$
and $A$ has maximal $L^p$-regularity when the operator
$$
R(f)(t)=\int_0^t\frac{\partial}{\partial t} (T_{t-s})(f(s))ds
$$
is bounded from $L^p(\DO,X)$  into itself. Note that
$\frac{\partial}{\partial t} T_t=-A T_t$, $t>0$. This fact leads,
from the point of view of harmonic analysis, to replace the property
of maximal $L^p$-regularity  by the $L^p$-boundedness of certain
Banach space valued singular integrals. If suitable Gaussian bounds
hold for the semigroup generated by $-A$, then $A$ has maximal
$L^p$-regularity (see \cite{CD} and \cite{HP}).

\begin{theorem}\label{Iteo6}
    Let $\mu>-1/2$. Assume that $1<p,q<\infty$. Then, the Bessel operator $\Delta_\mu$ has maximal $L^p$-regularity on $L^q(\DO)$.
\end{theorem}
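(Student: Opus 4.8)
The plan is to apply the abstract maximal regularity scheme recalled just before the statement, with Banach space $X=L^q(\DO)$ and $A=-{\bf\Delta}_\mu$ the positive selfadjoint realization of the Bessel operator. With this choice the Cauchy problem \eqref{I1.10} is exactly \eqref{I1.6} with $g=0$, its solution is $u(t)=\int_0^t W_{t-s}^\mu(f(s))\,ds$, and maximal $L^p$-regularity on $L^q(\DO)$ reduces, as explained above, to the $L^p(\DO,L^q(\DO))$-boundedness of $R(f)(t)=\int_0^t\partial_t\big(W_{t-s}^\mu\big)(f(s))\,ds$; after the substitution $\tau=t-s$ this is precisely the operator $\widetilde{{\bf R}_\mu}$ of \eqref{I1.9}. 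Two facts are already at our disposal: $-{\bf\Delta}_\mu$ is positive and selfadjoint on $L^2(\DO)$, hence generates a bounded analytic semigroup there; and, since $\mu>-1/2$, $\{W_t^\mu\}_{t>0}$ is a bounded semigroup on $L^q(\DO)$ for every $1<q<\infty$. I would then reach the conclusion through the Gaussian-bounds criterion quoted above (\cite{CD},\cite{HP}), whose only missing input is a pointwise Gaussian bound for the kernel.

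The key step is to prove that for $\mu>-1/2$ there are constants $C,c>0$ with
\[
0\le W_t^\mu(x,y)\le \frac{C}{\sqrt t}\,e^{-c\frac{(x-y)^2}{t}},\qquad x,y,t\in\DO .
\]
I would split according to the size of $z=\frac{xy}{2t}$ and use the elementary bounds $I_\mu(z)\le C_\mu\,z^{-1/2}e^{z}$ for $z\ge 1$ and $I_\mu(z)\le C_\mu\,z^{\mu}$ for $0<z\le 1$. In the regime $z\ge1$ the factor $e^{z}=e^{xy/2t}$ combines with $e^{-(x^2+y^2)/4t}$ to produce $e^{-(x-y)^2/4t}$, while the algebraic prefactors collapse to a constant multiple of $t^{-1/2}$, giving the desired estimate. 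In the regime $0<z\le1$ one obtains $W_t^\mu(x,y)\le C\,t^{-1/2}\big(xy/t\big)^{\mu+1/2}e^{-(x^2+y^2)/4t}$; here the hypothesis $\mu>-1/2$ forces the exponent $\mu+1/2>0$, so $(xy/t)^{\mu+1/2}\le C$ whenever $z\le1$, and the inequality $e^{-(x^2+y^2)/4t}\le e^{-(x-y)^2/8t}$ (equivalent to $(x+y)^2\ge0$) finishes the bound. This is exactly the point where the restriction $\mu>-1/2$ is used: it is what keeps the constants uniform as $x$ or $y$ tends to the boundary point $0$.

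With the Gaussian upper bound established and $-{\bf\Delta}_\mu$ selfadjoint, I would invoke the theorems of Coulhon--Duong \cite{CD} and Hieber--Pr\"uss \cite{HP}: a nonnegative selfadjoint operator whose heat kernel satisfies a Gaussian upper bound admits a bounded $H^\infty$ functional calculus on $L^q(\DO)$ for every $1<q<\infty$, and in particular has maximal $L^p$-regularity on $L^q(\DO)$ for all $1<p<\infty$. Applied to $A=-{\bf\Delta}_\mu$ this gives the theorem in the full range $1<p,q<\infty$.

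The main obstacle is the kernel estimate of the second paragraph: one must control the Bessel factor $I_\mu$ uniformly, in particular across the transition region $z\approx1$, and ensure that no constant degenerates as $x$ or $y$ approaches $0$. Once this Gaussian bound is secured, the passage to maximal regularity is a direct citation of the abstract theory; I emphasize that this route yields all pairs $1<p,q<\infty$ simultaneously, whereas the singular integral estimate of Theorem \ref{Iteo5}(2) only covers the diagonal case $p=q$ of the mixed norm $L^p(\DO,L^q(\DO))$.
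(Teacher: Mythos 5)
Your proposal is correct and follows essentially the same route as the paper: both proofs consist of establishing a Gaussian upper bound for the Bessel heat kernel $W_t^\mu(x,y)$ from the asymptotics (\ref{P1})--(\ref{P2}) of $I_\mu$ (with $\mu>-1/2$ precisely what keeps the factor $(xy/t)^{\mu+1/2}$ bounded in the regime $xy\le t$), and then quoting the abstract ``Gaussian bounds imply maximal $L^p$-regularity on $L^q$'' theorem. The only cosmetic difference is that the paper proves the kernel bound for complex times $|\mathrm{Arg}\,z|<\pi/4$ so as to apply \cite[Theorem 3.3]{Monniaux}, whereas you prove the real-time bound and use positivity/selfadjointness of $-{\bf\Delta}_\mu$ on $L^2\DO$ (hence analyticity of the semigroup) so that the Hieber--Pr\"uss/Coulhon--Duong criterion \cite{CD}, \cite{HP} applies; these are interchangeable formulations of the same machinery.
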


Note that Theorem \ref{Iteo6} actually establishes mixed norm estimates for $\widetilde{{\bf R}_\mu}$.

In the next sections we will prove our Theorems. In order to show
our results we use two different ways. On the one hand, we employ
scalar and vectorial Calder\'on-Zygmund theory in the parabolic
context. Here we need to get estimates involving the kernels of the
integral operators. In order to do this, the properties of the
Bessel function $I_\mu$ plays a crucial role.  On the other hand, we
use a comparative approach. In this second way we take advantages
that Bessel operators $\Delta_\mu$ are nice (in a suitable sense)
perturbations of the Laplacian. Then, it is possible to deduce the
properties of our integral operators from the corresponding ones
associated to the Laplace operator established in \cite{PST}.

Throughout this paper we will denote by $C$ and $c$ positive
constants, not necessarily the same in each occurrence.

\vspace{3mm}

{\bf Acknowledgements.} The authors thank Professor Jos\'e Luis
Torrea (UAM, Madrid) for posing the problems studied in this paper
and for reading a first version of the manuscript. His comments have
allowed us to improve Theorem \ref{Iteo5} and its proof.

\section{Proof of Theorem \ref{teo1}.}

In this section and in the following ones we use some properties of
the modified Bessel function $I_\nu$ that can be found in the
Lebedev's monograph (\cite{Leb}) and we recall now. For every
$\nu>-1$, the modified Bessel function $I_\nu$ is defined by
$$
I_\nu(z)=\sum_{k=0}^\infty
\frac{z^{2k+\nu}}{2^{2k+\nu}k!\Gamma(\nu+1)}, \,\,\,z\in
\mathbb{C}\setminus (-\infty,0].
$$
The following properties hold
\begin{equation}\label{P1}
\lim_{z\to 0}\frac{I_\nu(z)}{z^\nu}=\frac{1}{2^\nu\Gamma(\nu+1)}.
\end{equation}
\begin{equation}\label{P2}
I_\nu(z)=\frac{e^z}{\sqrt{2\pi
z}}\Bigg(\sum_{k=0}^n(-1)^k[\nu,k](2z)^{-k}+O(|z|^{-n-1})\Bigg),
\,\,\,z\in \mathbb{C}\,\,\,\text{and}\,\,\,|Arg(z)|< \frac{\pi}{4}.
\end{equation}
where $[\nu,0]=1$ and
$$
[\nu,k]=\frac{(4\nu^2-1)(4\nu^2-9)...(4\nu^2-(2k-1)^2)}{2^{2k}\Gamma(k+1)},\,\,\,k\in
\mathbb{N}\,\,\,\text{and}\,\,\,k\ge 1,
$$
and
\begin{equation}\label{P3}
\frac{d}{dz}(z^{-\nu}I_\nu(z))=z^{-\nu}I_{\nu+1}(z),\,\,\,z\in
\mathbb{C}\setminus (-\infty,0].
\end{equation}

    Suppose that $f\in L^\infty(\R\times (0,\infty))$ is a complex function such that supp$f$ is compact on $\R\times\DO$. Since $\INT W_\tau^\mu(x,y)y^{\mu+1/2}dy=x^{\mu+1/2}$, $\tau,x\in\DO$,
    we can write
    $$
    \INT\INT W_\tau^\mu(x,y)|f(t-\tau,y)|dyd\tau\le C\|f\|_\infty x^{\mu+1/2}, \;\; x\in\DO\;\text{and} \;t\in\R.
    $$
    Here $C>0$ depends on the support of $f$.
    Hence, the integral defining
    $$
    u(t,x)=\INT\INT W_\tau^\mu(x,y)f(t-\tau,y) dy d\tau, \;\; x\in\DO \;\text{and} \; t\in\R,
    $$
     is absolutely convergent.

     Assume now that $f\in C^1( \R\times\DO)$ and it has compact support. By proceeding as above we can prove that
     \begin{align*}
        \frac{\partial u (t,x)}{\partial t}&=\INT\INT W_\tau^\mu(x,y)   \frac{\partial}{\partial t}f(t-\tau,y)dyd\tau\\
        &=-\INT\INT W_\tau^\mu(x,y) \frac{\partial}{\partial \tau}f(t-\tau,y)dyd\tau,\;\; x\in\DO, \;\text{and} \; t\in\R,
     \end{align*}
     where the integrals are absolutely convergent.

     We can write
     \begin{align}\label{A0}
        \frac{\partial u (t,x)}{\partial t}&=-\int_{x/2}^{2x}\INT( W_\tau^\mu(x,y)-W_\tau(x-y)) \frac{\partial}{\partial \tau}f(t-\tau,y)d\tau dy\\
        &-\int_0^{x/2}\INT W_\tau^\mu(x,y)  \frac{\partial}{\partial \tau}f(t-\tau,y)d\tau dy\nonumber\\
        &-\int_{2x}^\infty\INT W_\tau^\mu(x,y)  \frac{\partial}{\partial \tau}f(t-\tau,y)d\tau dy\nonumber\\
        &-\int_{x/2}^{2x}\INT W_\tau(x-y)   \frac{\partial}{\partial \tau}f(t-\tau,y)d\tau dy, \;\;x,t\in\DO.\nonumber
     \end{align}
     Here and in the sequel we denote by
$$
W_\tau(z)=
\frac{1}{\sqrt{4\pi\tau}}e^{-|z|^2/(4\tau)},\,\,\,\tau>0\,\,\,\text{and}\,\,\,z\in
\mathbb{R},
$$
the classical heat kernel.

     According to (\ref{P1}) and (\ref{P2}) (\cite[Lemma 3.1]{BHNV}), we have that
     \begin{equation}\label{V1}
     0\le W_\tau^\mu(x,y)\le  C\Bigg(1+\Bigg(\frac{xy}{\tau}\Bigg)^{\mu+1/2}\Bigg)\frac{e^{-\frac{(x-y)^2}{\tau}}}{\sqrt{\tau}},\,\,\, x,y,\tau\in\DO.
     \end{equation}

     Then, $$W_\tau^\mu(x,y)\le C \Bigg(1+\Bigg(\frac{xy}{\tau}\Bigg)^{\mu+1/2}\Bigg)\frac{e^{-cx^2/\tau}}{\sqrt{\tau}},\,\,\,0<y<x/2\,\,\,\text{and}\,\,\, \tau\in\DO.$$

     By partial integration we get
     \begin{equation}\label{A1}
    \int_0^{x/2}\INT W_\tau^\mu(x,y)    \frac{\partial}{\partial \tau}f(t-\tau,y)d\tau dy=-\int_0^{x/2}\INT\frac{\partial}{\partial \tau} W_\tau^\mu(x,y)f(t-\tau,y)d\tau dy,\;\; t,x\in\DO.
     \end{equation}
     Also, we have that
     \begin{equation}\label{A2}
        \int_{2x}^\infty\INT W_\tau^\mu(x,y)    \frac{\partial}{\partial \tau}f(t-\tau,y)d\tau dy=- \int_{2x}^\infty\INT\frac{\partial}{\partial \tau} W_\tau^\mu(x,y)f(t-\tau,y)d\tau dy,\;\; t,x\in\DO.
     \end{equation}
      By using (\ref{P2}), it follows that
     \begin{align*}
     |W_\tau^\mu(x,y)-W_\tau(x-y)|&=\left|\frac{(xy)^{1/2}}{2\tau}e^{-\frac{x^2+y^2}{4\tau}}I_\mu\bigg(\frac{xy}{2\tau} \bigg) -\frac{1}{2\sqrt{\pi}}\frac{e^{-(x-y)^2/4\tau}}{\sqrt{\tau}}\right|\\
     &\le  C\frac{\sqrt{\tau}}{xy}e^{-(x-y)^2/(4\tau)}, \;\; \tau,x,y\in\DO.
         \end{align*}
     Partial integration leads to
     \begin{align}\label{A3}
     \int_{x/2}^{2x}\INT( W_\tau^\mu(x,y)&-W_\tau(x-y)) \frac{\partial}{\partial \tau}f(t-\tau,y)d\tau dy\nonumber\\
     &=- \int_{x/2}^{2x}\INT\frac{\partial}{\partial \tau}( W_\tau^\mu(x,y)-W_\tau(x-y))f(t-\tau,y)d\tau dy, \;\; t,x\in\DO.
     \end{align}

     We are going to see that the integrals on the right hand side of \eqref{A1}, \eqref{A2} and \eqref{A3} are absolutely convergent.
     By (\ref{P1}), (\ref{P2}), and (\ref{P3}) (\cite[pages 128-131]{BHNV}) we have that, for every $0<y<x/2$,
     \begin{equation}\label{T3}
      \bigg|\frac{\partial}{\partial \tau} W_\tau^\mu(x,y)\bigg|\le C \left\{ \begin{array}{c}
    \frac{1}{\tau^{3/2}}e^{-c x^2/\tau},\;\; 0<\tau<xy,\\
    \\
    \frac{(xy)^{\mu+1/2}}{\tau^{\mu+2}}e^{-cx^2/\tau}, \;\; \tau\ge xy.
     \end{array} \right.
     \end{equation}
     Let $x\in (0,\infty)$ and $t\in \mathbb{R}$. Since supp$f$ is compact, there exist $0<a<x/2$, $2x<b$ and $c>0$, such that $f(t-\tau,y)=0$, $(\tau,y)\notin (-\infty,c)\times (a,b)$. Then,
     $$
     \int_0^{x/2}\INT\bigg|\frac{\partial}{\partial \tau} W_\tau^\mu(x,y) \bigg||f(t-\tau,y)|d\tau dy\le C  \int_a^{x/2}\bigg(\int_0^{xy}\frac{d\tau}{x^2\sqrt{\tau}}d\tau+\int_{xy}^\infty\frac{(xy)^{\mu+1/2}}{\tau^{\mu+2}}d\tau\bigg)dy<\infty.
$$

     In a similar way we can see that
$$
     \int_{2x}^\infty\INT \bigg|\frac{\partial}{\partial \tau} W_\tau^\mu(x,y) \bigg||f(t-\tau,y)|d\tau dy<\infty.
         $$

    Again, according to (\ref{P1}), (\ref{P2}) and (\ref{P3}) (\cite[pages 128-130]{BHNV}) we have that
    \begin{equation}\label{T2}
     \bigg|\frac{\partial}{\partial \tau} W_\tau^\mu(x,y)-\frac{\partial}{\partial \tau} W_\tau(x-y)\bigg|\le C\frac{e^{-c\frac{(x-y)^2}{\tau}}}{xy\sqrt{\tau}} \le \frac{C}{xy\sqrt{\tau}},\;\;\tau,x,y\in\DO.
    \end{equation}
     Then

      \begin{align*}
      \int_{x/2}^{2x}\INT\bigg|\frac{\partial}{\partial \tau} W_\tau^\mu(x,y)-\frac{\partial}{\partial \tau} W_\tau(x-y)\bigg||f(t-\tau,y)|d\tau dy\le C \int_{a}^{b}\int_0^{c}\frac{d\tau}{xy\sqrt{\tau}}dy<\infty.
     \end{align*}

     On the other hand,
     \begin{align}\label{A4}
     &\int_{x/2}^{2x}\int_0^\infty W_\tau(x-y) \frac{\partial f}{\partial \tau}(t-\tau,y)d\tau dy=\displaystyle\lim_{\epsilon\to 0^+}\int_{x/2}^{2x}\int_{\epsilon}^{\infty}W_\tau(x-y)\frac{\partial f}{\partial \tau}(t-\tau,y)d\tau dy\nonumber\\
     &=-\lim_{\epsilon\to 0^+}\left(\int_{x/2}^{2x}W_\epsilon(x-y)f(t-\epsilon,y) dy+\int_{x/2}^{2x}\int_\epsilon^\infty  \frac{\partial }{\partial \tau}W_\tau(x-y)f(t-\tau,y)d\tau dy\right)\nonumber\\
     &=-\lim_{\epsilon\to 0^+}\int_{\epsilon}^\infty\int_{x/2}^{2x}  \frac{\partial }{\partial \tau}W_\tau(x-y)f(t-\tau,y)d\tau dy-f(t,x),\,\,\,t\in \mathbb{R}\,\,\,\text{and}\,\,\,x\in (0,\infty).
     \end{align}
      In the last equality we have taken into account that
$$
\lim_{s\to 0^+}\int_{x/2}^{2x}W_s(x-y)f(t-s,y)dy=f(t,x), \,\,\,t\in
\mathbb{R}\,\,\,\text{and}\,\,\,x\in (0,\infty).
$$
Indeed, let $t\in \mathbb{R}$ and $x\in (0,\infty)$. Since $f\in
C^1(\mathbb{R}\times (0,\infty))$ with compact support, by using
mean value theorem we deduce that $|f(t-s,y)-f(t,y)|\le Cs$, $s,y\in
(0,\infty)$. Then, we can write
$$
\Bigg|\int_{x/2}^{2x}W_s(x-y)f(t-s,y)dy-\int_{x/2}^{2x}W_s(x-y)f(t,y)dy\Bigg|\le
Cs\int_{x/2}^{2x}W_s(x-y)dy\le Cs.
$$
On the other hand, for a certain $a>0$ such that $2/a<x<a/2$ and
$f(t,y)=0$, $y\notin (1/a,a)$. It follows, with the obvious
extension of $f$, that
$$
\Bigg|\int_{-\infty}^{x/2}W_s(x-y)f(t,y)dy\Bigg|\le
C\int_{1/a}^{x/2}W_s(x-y)dy\le Cs^{-1/2}e^{-cx^2/s},\,\,\,s>0,
$$
and
$$
\Bigg|\int_{2x}^{\infty}W_s(x-y)f(t,y)dy\Bigg|\le
C\int_{2x}^{a}W_s(x-y)dy\le Cs^{-1/2}e^{-cx^2/s},\,\,\,s>0.
$$
Moreover, it is well known that
$$
\lim_{s\to 0^+}\int_{-\infty}^\infty W_s(x-y)f(t,y)dy=f(t,x).
$$
Putting together all the above estimates we obtain
\begin{align*}
\Bigg|\int_{x/2}^{2x}W_s(x-y)f(t-s,y)dy&-f(t,x)\Bigg|\le \Bigg|\int_{x/2}^{2x}W_s(x-y)f(t-s,y)dy-\int_{x/2}^{2x}W_s(x-y)f(t,y)dy\Bigg|\\
&+ \Bigg|\int_{x/2}^{2x}W_s(x-y)f(t,y)dy-\int_{-\infty}^\infty W_s(x-y)f(t,y)dy\Bigg|\\
&+ \Bigg|\int_{-\infty}^\infty W_s(x-y)f(t,y)dy-f(t,x)\Bigg|\\
&\le C\Big(s+s^{-1/2}e^{-cx^2/s}\Big)+\Bigg|\int_{-\infty}^\infty
W_s(x-y)f(t,y)dy-f(t,x)\Bigg|,\,\,\,s>0.
\end{align*}
We conclude that
$$
\lim_{s\to 0^+}\int_{x/2}^{2x}W_s(x-y)f(t-s,y)dy=f(t,x).
$$

      From \eqref{A0}, \eqref{A1}, \eqref{A2}, \eqref{A3} and \eqref{A4} we deduce that

      \begin{align*}
      \frac{\partial}{\partial t}u(t,x)&=\lim_{\epsilon\to 0^+}\bigg(\int_{\epsilon}^\infty\int_{x/2}^{2x}\frac{\partial}{\partial \tau} (W_\tau^\mu(x,y)- W_\tau(x-y))f(t-\tau,y)dy d\tau\\
    &+\int_{\epsilon}^\infty\int_0^{x/2}\frac{\partial}{\partial \tau}W_\tau^\mu(x,y)f(t-\tau,y)dy d\tau+\int_{\epsilon}^\infty\int_{2x}^\infty\frac{\partial}{\partial \tau}W_\tau^\mu(x,y)f(t-\tau,y)dy d\tau\\
&+\int_{\epsilon}^\infty\int_{x/2}^{2x}\frac{\partial}{\partial
\tau}W_\tau(x-y)f(t-\tau,y)dy d\tau\bigg)+f(t,x), \;\; t,x>0.
      \end{align*}
      We conclude that
      \begin{equation}\label{A5}
\partial_t u(t,x)=\displaystyle\lim_{\epsilon\to 0^+}\int_{\epsilon}^\infty\INT\frac{\partial}{\partial \tau}W_\tau^\mu(x-y)f(t-\tau,y)dy d\tau+f(t,x), \;\; t,x\in\DO.
      \end{equation}

      Assume now that $f\in C^2(\R\times\DO)$ and it has compact support. We consider the function
      $$
      H(t,x)=\INT\INT (W_\tau^\mu(x,y)- W_\tau(x-y))f(t-\tau,y)dy d\tau, \;\; t,x\in\DO.
      $$
      Note that there exists $0<a<b<\infty$ such that
      $$
      H(t,x)=\INT\int_a^b (W_\tau^\mu(x,y)- W_\tau(x-y))f(t-\tau,y)dy d\tau, \;\; t,x\in\DO.
      $$
      Let $t\in\R$. There exists $\tau_0\in\DO$ for which
      $$
          H(t,x)=\int_0^{\tau_0}\int_a^b (W_\tau^\mu(x,y)- W_\tau(x-y))f(t-\tau,y)dy d\tau, \;\; x\in\DO.
      $$
      By (\ref{P3}) we have that
      \begin{align}\label{T1}
      \frac{\partial}{\partial x}W_\tau^\mu(x,y)&=\frac{e^{-\frac{x^2+y^2}{4\tau}}}{(2\tau)^{1/2}}\bigg(x\left( \frac{y}{2\tau}\right)^2\left(\frac{xy}{2\tau}\right)^{-1/2}I_{\mu+1}\left(\frac{xy}{2\tau}\right)-\frac{x}{2\tau}\left(\frac{xy}{2\tau}\right)^{1/2}I_\mu\left(\frac{xy}{2\tau}\right)\nonumber\\
      &+(\mu+1/2)y(xy)^{-1/2}\frac{1}{\sqrt{2\tau}}I_\mu\left(\frac{xy}{2\tau}\right)\bigg), \;\; \tau,x,y\in\DO.
      \end{align}
       Then, from (\ref{P1}) it follows that
       \begin{align}\label{A5bis}
       &\left|\frac{\partial}{\partial x}W_\tau^\mu(x,y)- \frac{\partial}{\partial x}W_\tau(x-y) \right|\nonumber\\
&\le C\bigg[ \frac{e^{-\frac{x^2+y^2}{4\tau}}}{\tau^{3/2}}\left(\frac{xy}{2\tau}\right)^\mu\left(\left(\frac{xy}{2\tau}\right)^{3/2} y+\left(\frac{xy}{2\tau}\right)^{1/2}x+\left(\frac{xy}{2\tau}\right)^{-1/2} y \right)+\frac{|x-y|e^{-\frac{(x-y)^2}{4\tau}}}{\tau^{3/2}}\bigg]\nonumber\\
       &\le C \frac{e^{-\frac{x^2+y^2}{4\tau}}}{\tau^{3/2}}\left(x+y+\left(\frac{xy}{2\tau}\right)^{-1/2}x+\left(\frac{xy}{2\tau}\right)^{-3/2}y\right),
       \;\; \tau,x,y\in\DO,\; xy\le \tau,
       \end{align}
       and (\ref{P2}) implies that
       \begin{align}\label{A6bis}
        &\left|\frac{\partial}{\partial x}W_\tau^\mu(x,y)- \frac{\partial}{\partial x}W_\tau(x-y) \right|\nonumber\\
&=\bigg|\frac{e^{-\frac{(x-y)^2}{4\tau}}}{\sqrt{4\pi\tau}}\bigg\{\Bigg(x\Bigg(\frac{y}{2\tau}\Bigg)^2\Bigg(\frac{xy}{2\tau}\Bigg)^{-1}-\frac{x}{2\tau}+(\mu+1/2)y(xy)^{-1}
        \bigg)\left(1+O\left(\frac{\tau}{xy}\right)\right)\nonumber\\
&+\frac{x-y}{2\tau}\bigg\}\bigg|\nonumber\\
       &\le Ce^{-\frac{(x-y)^2}{4\tau}}\left(\frac{1}{x\sqrt{\tau}}+\frac{1}{y\sqrt{\tau}}\right),\;\;\tau,x,y\in\DO, \; xy\geq \tau.
       \end{align}
      From \eqref{A5bis} and \eqref{A6bis} it follows that
      \begin{align*}
     \int_0^{\tau_0}\int_a^b&\left|\frac{\partial}{\partial x}W_\tau^\mu(x,y) - \frac{\partial}{\partial x}W_\tau(x-y) \right||f(t-\tau,y)|dy d\tau\\
&\le C\int_a^b\int_0^{xy}{e^{-\frac{(x-y)^2}{4\tau}}}\left(\frac{1}{x\sqrt{\tau}}+\frac{1}{y\sqrt{\tau}}\right)d\tau dy\\
      &+\int_a^b\int_{xy}^{\max\{xy,\tau_0\}}\frac{e^{-\frac{x^2+y^2}{4\tau}}}{\tau^{3/2}}\left(x+y+\left(\frac{xy}{\tau}\right)^{-1/2}x+\left(\frac{xy}{\tau}\right)^{-3/2}y\right)d\tau dy<\infty, \;\; x\in\DO.
      \end{align*}


Hence,
$$
\frac{\partial}{\partial x} H(t,x)=\INT\INT
\left(\frac{\partial}{\partial x}W_\tau^\mu(x,y)-
\frac{\partial}{\partial x}W_\tau(x-y) \right)f(t-\tau,y) dy d\tau,
\;\; x\in\DO,
$$
and the last integral is absolutely convergent.

On the other hand, for every $x,y\in (0,\infty)$,
\begin{equation}\label{V3}
\frac{\partial^2}{\partial
x^2}[W_\tau^\mu(x,y)-W_\tau(x-y)]=\frac{\mu^2-1/4}{x^2}W_\tau^\mu(x,y)+\frac{\partial}{\partial
\tau}[W_\tau^\mu(x,y)-W_\tau(x-y)]
\end{equation}

By proceeding as above we get that
$$
\frac{\partial^2}{\partial x^2}{H}(t,x)=\INT\INT
\frac{\partial^2}{\partial
x^2}[W_\tau^\mu(x,y)-W_\tau(x-y)]f(t-\tau,y)dy d\tau, \;\;x\in\DO,
$$
and the last integral is absolutely convergent.

We now consider the function
$$
\mathcal{H}(t,x)=\INT\INT W_\tau(x-y)f(t-\tau, y)dy d\tau, \;\;
t,x\in\DO.
$$

Note that, by extending $f$ in the obvious way,
$$
\mathcal{H}(t,x)=\INT\int_{-\infty}^\infty W_\tau(x-y)f(t-\tau, y)dy
d\tau=\INT\int_{-\infty}^\infty W_\tau(y)f(t-\tau,x-y)dy d\tau, \;\;
t,x\in\DO.
$$
Then,
\begin{align*}
\frac{\partial}{\partial x}\mathcal{H}(t,x)&=\INT\int_{-\infty}^\infty W_\tau(y)\frac{\partial}{\partial x}f(t-\tau,x-y)dy d\tau\\
&=-\INT\int_{-\infty}^\infty W_\tau(y)\frac{\partial}{\partial
y}f(t-\tau,x-y)dy d\tau, \;\; t,x\in\DO,
\end{align*}
 and the last integral is
absolutely convergent.

Partial integration leads to

\begin{align*}
\frac{\partial}{\partial x}\mathcal{H}(t,x)&=-\lim_{\epsilon\to 0^+}\int_\epsilon^\infty\int_{-\infty}^\infty W_\tau(y)\frac{\partial}{\partial y}f(t-\tau,x-y)dy d\tau\\
&=\lim_{\epsilon\to 0^+}\int_\epsilon^\infty\int_{-\infty}^\infty
\frac{\partial}{\partial y}W_\tau(y)f(t-\tau,x-y)dy d\tau,
t,x\in\DO,
\end{align*}

In a similar way we can see that
$$
\frac{\partial^2}{\partial x^2}\mathcal{H}(t,x)=\lim_{\epsilon\to
0^+}\int_\epsilon^\infty\int_{-\infty}^\infty
\frac{\partial^2}{\partial y^2}W_\tau(y)f(t-\tau,x-y)dy d\tau,
\;\;t,x\in\DO.
$$

We conclude that, for $i=1,2$,
\begin{equation}\label{A6}
\frac{\partial^i}{\partial x^i} u(t,x)=\lim_{\epsilon\to
0^+}\int_\epsilon^\infty\int_{0}^\infty \frac{\partial^i}{\partial
x^i}W_\tau^\mu(x,y)f(t-\tau,y)dy d\tau, \;\;t,x\in\DO.
\end{equation}
By combining \eqref{A5} and \eqref{A6} we obtain
\begin{align*}
&\frac{\partial}{\partial t} u(t,x)-\frac{\partial^2}{\partial x^2} u(t,x)+\frac{\mu^2-1/4}{x^2}u(t,x)\\
&=\lim_{\epsilon\to 0^+}\int_\epsilon^\infty\int_{0}^\infty \Bigg(\partial_\tau-\partial_x^2+\frac{\mu^2-1/4}{x^2}\Bigg)W_\tau^\mu(x,y)f(t-\tau,y)dy d\tau+f(t,x)\\
&=f(t,x), \;\;t,x\in\DO.
\end{align*}
The other representations of the derivatives of $u$ as principal
values can be proved by proceeding  as above and by taking into
account \cite[Theorem 1.3,(A)]{PST}.

\section{Proof of Theorem \ref{Iteo2}, (1) and (2).}

Assume that $\mu>-1$. If $f$ is a measurable complex function
defined on $\R\times\DO$, we define $L_\mu f$ as follows
$$
(L_\mu f)(t,x)=\INT\INT W_\tau^\mu(x,y)f(t-\tau,y)dy d\tau,
$$
provided that the last integral exists with $(t,x)\in\R\times\DO$.

In Theorem \ref{teo1} we established that if $f\in C^2(\R\times\DO)$
and it has compact support, then $(\partial_t-\triangle_\mu)L_\mu
f=f$. Also, in a similar way we can see that if $f\in
C^2(\R\times\DO)$ and it has compact support, then
$L_\mu(\partial_t-\triangle_\mu) f=f$. In other words, we can see
that for good enough functions,
$L_\mu=(\partial_t-\triangle_\mu)^{-1}$.

Suppose that $f\in C^2(\R\times\DO)$ and it has compact support.
According to \cite[page 134]{Leb} we have that
$|\sqrt{z}J_\nu(z)|\le C$, $z\in (1, \infty)$, and
$|\sqrt{z}J_\nu(z)|\le Cz^{\nu+1/2}$, $z\in (0,1)$, when $\nu>-1$.
Let $z\in (0,\infty)$ and $t\in \mathbb{R}$. There exist
$0<a<b<\infty$ and $c>0$ such that
\begin{align*}
\INT\INT\INT|\sqrt{xz}J_\mu(xz)|&|W_\tau^\mu(x,y)||f(t-\tau,y)|dyd\tau dx\\
&\le C\int_a^b\int_0^c\int_0^\infty
|\sqrt{xz}J_\mu(xz)||W_\tau^\mu(x,y)|dxd\tau dy.
\end{align*}

Let $\mu>-1/2$. From (\ref{P1}) and (\ref{P2}) we deduce that
$$
W_\tau^\mu(x,y)\le C\frac{1}{\sqrt{\tau}}e^{-\frac{(x-y)^2}{4\tau}},
\,\,\,x,\,y,\,\tau\in (0,\infty).
$$
We also have that $|\sqrt{xz}J_\mu(xz)|\le C$, $x,z\in (0,\infty)$.
Then, we get
$$
\int_0^\infty |\sqrt{xz}J_\mu(xz)||W_\tau^\mu(x,y)|dx\le
C\int_{\mathbb{R}}\frac{1}{\sqrt{\tau}}e^{-\frac{(x-y)^2}{4\tau}}dx\le
C,\,\,\,\tau>0.
$$
Hence,
$$
\INT\INT\INT|\sqrt{xz}J_\mu(xz)||W_\tau^\mu(x,y)||f(t-\tau,y)|dyd\tau
dx<\infty.
$$

Assume that now $-1<\mu\le -1/2$. By using again (\ref{P1}) and
(\ref{P2}) we obtain that
$$
 W_\tau^\mu(x,y)\le C \left\{ \begin{array}{c}
    \frac{1}{\sqrt{\tau}}e^{-c (x-y)^2/\tau},\;\; 0<\tau<xy,\\
    \,\\
    \frac{(xy)^{\mu+1/2}}{\tau^{\mu+1}}e^{-c(x^2+y^2)/\tau}, \;\; \tau\ge xy.
     \end{array} \right.
     $$
Then, it follows that
\begin{align*}
\int_0^\infty |\sqrt{xz}&J_\mu(xz)|W_\tau^\mu(x,y)dx\le
C\Bigg(\int_0^1(xz)^{\mu+1/2}W_\tau^\mu(x,y)dx+\int_1^\infty
W_\tau^\mu(x,y)dx\Bigg)\\
&\le
C\Bigg(\int_0^{min\{1,\tau/y\}}\frac{(xz)^{\mu+1/2}(xy)^{\mu+1/2}}{(x^2+y^2)^{\mu+1}}dx+\int_{min\{1,\tau/y\}}^1
(xz)^{\mu+1/2}\frac{e^{-c(x-y)^2/\tau}}{\sqrt{\tau}}dx\\
&+\int_1^{max\{1,\tau/y\}}\frac{(xy)^{\mu+1/2}}{(x^2+y^2)^{\mu+1}}dx+\int_{max\{1,\tau/y\}}^\infty
\frac{e^{-c(x-y)^2/\tau}}{\sqrt{\tau}}dx\Bigg)\\
&\le
C\Bigg(\int_0^1x^{2\mu+1}dx+\frac{1}{\sqrt{\tau}}\int_0^1x^{\mu+1/2}dx+1+\int_{\mathbb{R}}\frac{e^{-c(x-y)^2/\tau}}{\sqrt{\tau}}dx\Bigg)\\
&\le C\Bigg(1+\frac{1}{\sqrt{\tau}}\Bigg), \,\,\,y\in
(a,b)\,\,\,\text{and}\,\,\,\tau\in (0,c).
\end{align*}
Hence,
$$
\INT\INT\INT|\sqrt{xz}J_\mu(xz)||W_\tau^\mu(x,y)||f(t-\tau,y)|dyd\tau
dx<\infty.
$$

This fact justifies the interchanges in the orders of integration to
get
\begin{align*}
h_\mu((L_\mu f)(t,x);x\to z)&=\INT h_\mu\left( \INT W_t^\mu(x,y)f(t-\tau,y)dy; x\to z\right)d\tau\\
&=\INT e^{-z^2\tau}h_\mu(f(t-\tau,y);y\to z)d\tau,
\end{align*}
because (see \cite[p. 195]{Wat})
$$
W_\tau^\mu(x,y)=\int_0^\infty
e^{-z^2\tau}\sqrt{xz}J_\mu(xz)\sqrt{yz}J_\mu(yz)dz,\,\,\,x,y,\tau\in
(0,\infty).
$$

Also, we have that, for certain $0<a<b<+\infty$ and
$-\infty<c<d<+\infty$,
\begin{align*}
&\int_{\R}\INT\INT e^{-z^2\tau}|\sqrt{yz}J_\mu(yz)||e^{-it\rho}||f(t-\tau,y)|dyd\tau dt\\
&\le C\INT e^{-z^2\tau}\int_{c+\tau}^{d+\tau}\int_a^b |f(t-\tau, y)|
dy dt d\tau <\infty,\,\,\, z\in
(0,\infty)\,\,\,\text{and}\,\,\,\rho\in\R.
\end{align*}
Note that, fixed $z\in (0,\infty)$,  $|\sqrt{yz}J_\mu(yz)|\le C$,
$y\in (a,b)$.

We denote, as usual, by $\mathcal{F}$ the Fourier transformation
defined by, for every $\phi\in L^1(\mathbb{R})$, by
$$
\mathcal{F}(\phi)(\rho)=\int_\mathbb{R}e^{-i\rho
t}\phi(t)dt,\,\,\,\rho\in \mathbb{R}.
$$

Then,
\begin{align*}
\mathcal{F}(h_\mu&((L_\mu f)(t,x);x\to z), t\to \rho)=\INT e^{-(z^2+i\rho)\tau}d\tau \mathcal{F}(h_\mu(f(t,x), x\to z);t\to\rho)\\
&=\frac{1}{z^2+i\rho}\mathcal{F}(h_\mu(f)(t,x); x\to z); t\to \rho),
\;\;z\in\DO\;\;\text{and}\;\; \rho\in\R.
\end{align*}
We define the space of functions $S_\mu$ as follows. A smooth
function $f$ on $\R\times \DO$ is in $S_\mu$ if and only if, for
every $m,k,l\in\N$,
$$
\sup_{t\in\R,
x\in\DO}(1+x^2)^m(1+t^2)^m\left|\frac{\partial^k}{\partial
t^k}\left(\frac{1}{x}\frac{\partial}{\partial x}\right)^{l}
(x^{-\mu-1/2}f(t,x)) \right|<\infty.
$$

By proceeding as above we can see that if $f\in S_\mu$ then the
integral defining $L_\mu(f)(t,x)$ is absolutely convergent, for
every $x\in\DO$ and $t\in\R$, and
$$
\mathcal{F}(h_\mu((L_\mu f)(t,x); x\to z);
t\to\rho)=\frac{1}{z^2+i\rho}\mathcal{F}(h_\mu(f(t,x), x\to z); t\to
\rho), \;\; z\in\DO,\; \rho\in\R.
$$

We consider the function space $C_{c,0}^\infty(\R)$ that consists of
all those $C^\infty(\R)$-functions $\phi$ such that supp$\,\phi$ is
compact and $\phi(t)=0$, $t\in (-r,r)$, for some $r>0$.
$C_{c,0}^\infty(\R)$ is a dense subspace of $L^2(\R)$. We define
$Z=\mathcal{F}(C_{c,0}^\infty(\R))$ and
$$
Z\otimes C_c^\infty(0,\infty)=\Bigg\{ \sum_{i=1}^n
\alpha_i\beta_i,\;\;\alpha_i\in Z, \beta_i\in
C_c^\infty(0,\infty),\; i=1,\dots, n, \; n\in\N\Bigg\}.
$$
 Since the Fourier transform $\mathcal{F}$ is an isometry on $L^2(\R)$, $Z$ is a dense subspace of $L^2(\R)$.
 Then $Z\otimes C_c^\infty(0,\infty)$ is a dense subset of $L^2(\R\times (0,\infty))$. If $\alpha\in Z$ and $\beta\in C_c^\infty(0,\infty)$, for a certain $r>0$,
 $$
 \Bigg|\frac{1}{z^2+i\rho}\mathcal{F}(\alpha)(\rho)h_\mu(\beta)(z)\Bigg|\le \frac{1}{r}|\mathcal{F}(\alpha)(\rho)h_\mu(\beta)(z)|,\,\,\,\rho\in \R\,\,\,\text{and}\,\,\,z\in (0,\infty),
 $$
 and hence
 $$
 \frac{1}{z^2+i\rho}\mathcal{F}(\alpha)(\rho)h_\mu(\beta)(z)\in L^2(\R\times (0,\infty))\cap L^1(\R\times (0,\infty)).
 $$
 It follows that, for every $f\in Z\otimes C_c^\infty(0,\infty)$,
 $$
 (L_\mu f)(t,x)=\mathcal{F}^{-1}(h_\mu(\frac{1}{z^2+i\rho}\mathcal{F}(h_\mu(f(s,y); y\to z); s\to \rho); z\to x); \rho\to t ).
 $$

According to \cite{Ze}, we have that, for every $\beta\in H_\mu$,
$\delta_\mu h_\mu (\beta)=-h_{\mu+1}(z\beta),$ where
$\delta_\mu=x^{\mu+1/2}\frac{d}{dx}x^{-\mu-1/2}$. Here $H_\mu$
denotes the space introduced by Zemanian \cite[Chapter 5]{Ze}
consisting of all those $\phi\in C^\infty\DO$ such that, for every
$m,k\in\N$,
$$
\sup_{x\in\DO}\bigg|(1+x^2)^m\left(\frac{1}{x}\frac{d}{d x}\right)^k
(x^{-\mu-1/2}\phi(x))\bigg|<\infty.
$$
 Since $z\beta\in H_{\mu+1}$, for every $\beta\in H_\mu$, we can write
 \begin{align*}
 \delta_{\mu+1}&\delta_\mu L_\mu (f)(t,x)\\
 &=\mathcal{F}^{-1}\left(h_{\mu+2}\left( \frac{z^2}{z^2+i\rho} \mathcal{F}(h_\mu(f(s,y);y\to z); s\to \rho); z\to x\right); \rho\to t\right),\;\;
 t\in\R,\,\,\,x\in \DO,
 \end{align*}
for each $f\in Z\otimes C_c^\infty(0,\infty)$.

We define the Riesz transformation $R_\mu$ by $R_\mu
f=\mathcal{F}^{-1}h_{\mu+2}\left( \frac{z^2}{z^2+i\rho}
\mathcal{F}h_\mu(f)\right), \;\; f\in L^2(\R\times\DO).$

Thus, $R_\mu f= \delta_{\mu+1}\delta_\mu L_\mu f$, $f\in Z\otimes
C_c^\infty(0,\infty)$, and $R_\mu$ is bounded from
$L^2(\R\times\DO)$ into itself.

Also, for every $f\in Z\otimes C_c^\infty(0,\infty)$, we have that
\begin{align*}
\partial_tL_\mu& (f)(t,x)\\
&=\mathcal{F}^{-1}\left(h_{\mu}\left( \frac{-i\rho}{z^2+i\rho}\mathcal{F}(h_\mu(f(s,y);y\to z); s\to \rho); z\to x\right); \rho\to t\right), \;\; t\in\R, \; x\in\DO.
\end{align*}

We define the Riesz transformation $\widetilde{R_\mu} $ by
$\widetilde{R_\mu} f=\mathcal{F}^{-1}h_{\mu}\left(
\frac{-i\rho}{z^2+i\rho}\mathcal{F}h_\mu(f)\right)$, $f\in
L^2(\R\times\DO)$. Thus, $\widetilde{R_\mu} f=\partial_tL_\mu f$,
$f\in Z\otimes C_c^\infty(0,\infty)$, and $\widetilde{R_\mu}$ is
bounded from $L^2(\R\times\DO)$ into itself.

Suppose that $f(t,x)=\alpha(t)\beta(x)$, $t\in \R$ and $x\in
(0,\infty)$, where $\alpha\in Z$ and $\beta\in
C_c^\infty(0,\infty)$. We have that
\begin{align*}
\frac{\partial^2}{\partial x^2}&\int_0^\infty\int_0^\infty W_\tau(x-y)\alpha(t-\tau)\beta(y)dyd\tau=\frac{\partial^2}{\partial x^2}\int_{-\infty}^x\int_0^\infty W_\tau(y)\alpha(t-\tau)d\tau\beta(x-y)dy\nonumber\\
&=\int_{-\infty}^x\int_0^\infty
W_\tau(y)\alpha(t-\tau)d\tau\frac{\partial^2}{\partial
x^2}\beta(x-y)dy,\,\,\,t\in \R\,\,\,\text{and}\,\,\,x\in (0,\infty),
\end{align*}
and the last integral is absolutely convergent. Then, we can write,
for every $t\in \mathbb{R}$ and $x\in \DO$,
\begin{align*}
\frac{\partial^2}{\partial x^2}&\int_0^\infty\int_0^\infty
W_\tau(x-y)\alpha(t-\tau)\beta(y)dyd\tau=\lim_{\varepsilon\to
0^+}\int_{\Omega_\varepsilon}
W_\tau(y)\alpha(t-\tau)\frac{\partial^2}{\partial
x^2}\beta(x-y)dyd\tau,
\end{align*}
where $\Omega_\varepsilon=\{(\tau,y)\in (0,\infty)\times\mathbb{R}:|y|+\sqrt{\tau}>\varepsilon\}$. By partial
integration as in the proof of \cite[Theorem 2.3, (B)]{PST} we
obtain
\begin{align*}
\frac{\partial^2}{\partial x^2}&\int_0^\infty\int_0^\infty W_\tau(x-y)\alpha(t-\tau)\beta(y)dyd\tau=\lim_{\varepsilon\to 0^+}\int_{\Omega_\varepsilon} \frac{\partial^2}{\partial y^2}W_\tau(y)\alpha(t-\tau)\beta(x-y)dyd\tau\nonumber\\
&+f(t,x)\frac{1}{\sqrt{\pi}}\int_1^\infty e^{-w^2/4}dw, \,\,\,t\in
\R\,\,\,\text{and}\,\,\,x\in (0,\infty).
\end{align*}
By proceeding as in the proof of Theorem \ref{teo1} we can see that,
for every $f\in Z\otimes C_c^\infty(0,\infty)$,
\begin{align*}
R_\mu(f)(t,x)&=\lim_{\varepsilon\to 0^+}\int_{\Omega_\varepsilon(x)} \delta_{\mu+1}\delta_\mu W_\tau(x,y)\alpha(t-\tau)\beta(y)dyd\tau\nonumber\\
&+f(t,x)\frac{1}{\sqrt{\pi}}\int_1^\infty e^{-w^2/4}dw, \,\,\,t\in
\R\,\,\,\text{and}\,\,\,x\in (0,\infty),
\end{align*}
where $\Omega_\varepsilon(x)=\{(\tau,y)\in (0,\infty)\times
(0,\infty):\,|y-x|+\sqrt{\tau}>\varepsilon\}$, for every
$x\in (0,\infty)$.

In a similar way we can show that, for every $f\in Z\otimes
C_c^\infty(0,\infty)$,
\begin{align*}
\widetilde{R_\mu}(f)(t,x)&=\lim_{\varepsilon\to 0^+}\int_{\Omega_\varepsilon(x)} \frac{\partial}{\partial \tau} W_\tau^\mu(x,y)\alpha(t-\tau)\beta(y)dyd\tau\nonumber\\
&+f(t,x)\frac{1}{\sqrt{\pi}}\int_0^1 e^{-w^2/4}dw, \,\,\,t\in
\R\,\,\,\text{and}\,\,\,x\in (0,\infty).
\end{align*}

In order to prove Theorem \ref{Iteo2}, (ii), we use
Calder\'on-Zygmund theory on the space of homogeneous type
$(\R\times (0,\infty),m,d)$, where $m$ and $d$ denote the Lebesgue
measure and the parabolic distance, respectively, on $\R\times
(0,\infty)$. We now recall the definitions and results that will be
useful in the sequel. We describe now Calder\'on-Zygmund theory in
the more general vectorial setting because we will use it in the
proof of Theorem \ref{Iteo3}.

Suppose that $X$ and $Y$ are Banach spaces. By $\mathcal{L}(X,Y)$ we
denote the space of bounded operators from $X$ to $Y$. If $1\le
p<\infty$ we represent by $L^p(\R\times (0,\infty),X)$ and
$L^{p,\infty}(\R\times (0,\infty),X)$ the Bochner Lebesgue
$L^p$-space and weak Bochner Lebesgue $L^{p,\infty}$-space. Assume
that $T$ is a bounded operator from $L^p(\R\times (0,\infty),X)$
into $L^p(\R\times (0,\infty),Y)$, for some $1<p<\infty$, satisfying
that
\begin{equation}\label{N1}
T(f)(t,x)=\int_{\R\times
(0,\infty)}K(t,x;s,y)(f(s,y))dsdy,\,\,\,(t,x)\notin \text{supp}\,f,
\end{equation}
for every $f\in S$, where $S$ represents a linear space that is
dense in $L^q(\R\times (0,\infty),X)$, for every $1\le q<\infty$.
Here
$$
K:[(\R\times (0,\infty)\times(\R\times (0,\infty)]\setminus
D\longrightarrow \mathcal{L}(X,Y),
$$
is a strongly measurable function, being
$$
D=\{(t,x;s,y)\in (\R\times (0,\infty))\times(\R\times (0,\infty)):
(t,x)= (s,y)\}.
$$

We say that $K$ is a standard Calder\'on-Zygmund kernel in
$(\R\times (0,\infty),m,d)$ when the following properties hold

(a)$\|K(t,x;s,y)\|_{\mathcal{L}(X,Y)}\le
\frac{C}{d((t,x),(s,y))^{3}}$, $(t,x)\neq (s,y)$.

\vspace{2mm}

 (b) provided that
$d((t,x),(s_0,y_0))>d((s,y),(s_0,y_0))$,
$$
\|K(t,x;s,y)-K(t,x;s_0,y_0)\|_{\mathcal{L}(X,Y)}+\|K(s,y;t,x)-K(s_0,y_0;t,x)\|_{\mathcal{L}(X,Y)}\le
C\frac{d((s,y),(s_0,y_0))}{d((t,x),(s_0,y_0))^{4}}.
$$

If $1<p<\infty$, a weight $w$ on $\R\times (0,\infty)$ is in the
Muckenhoupt class $A_p^{*}(\R\times (0,\infty))$ when there exists
$C>0$ such that
$$
\frac{1}{|B|}\int_Bw(t,x)dtdx\Bigg(\frac{1}{|B|}\int_Bw(t,x)^{1/(1-p)}dtdx\Bigg)^{p-1}\le
C,
$$
for every ball (with respect to $d$) in $\R\times (0,\infty)$.

A weight $w$ is in $A_1^*(\R\times (0,\infty))$ when there exists
$C>0$ such that, for a.e. $(t,x)\in \R\times (0,\infty)$,
$$
\frac{1}{|B|}\int_Bw(s,y)dsdy\le Cw(t,x),
$$
for every ball $B$ (with respect to $d$) containing $(t,x)$.

The Calder\'on-Zygmund Theorem says that if $T$ satisfies the above
properties where $K$ in (\ref{N1}) is a standard Calder\'on-Zygmund
kernel, then the operator $T$ can be extended,

(a) for every $1<q<\infty$ and $w\in A_q^*(\R\times (0,\infty))$,
from $L^p(\R\times (0,\infty),X)\cap L^q(\R\times (0,\infty),w,X)$
to $L^q(\R\times (0,\infty),w,X)$ as a bounded operator from
$L^q(\R\times (0,\infty),w,X)$ into $L^q(\R\times (0,\infty),w,Y)$;

(b) for every $w\in A_1^*(\R\times (0,\infty))$, from $L^p(\R\times
(0,\infty),X)\cap L^1(\R\times (0,\infty),w,X)$ to $L^1(\R\times
(0,\infty),w,X)$ as a bounded operator from $L^1(\R\times
(0,\infty),w,X)$ into $L^{1,\infty}(\R\times (0,\infty),w,Y)$.

Moreover, the maximal operator given by
$$
T^*(f)(t,x)=\sup_{\varepsilon>0}\Bigg\|\int_{d((t,x),(s,y))>\varepsilon}K(t,x;s,y)(f(s,y))dsdy\Bigg\|_Y,
$$
defines a bounded operator from

(a) $L^q(\R\times (0,\infty),w,X)$ into $L^q(\R\times
(0,\infty),w)$, for every $1<q<\infty$ and $w\in A^*_q(\R\times
(0,\infty))$;

(b) $L^1(\R\times (0,\infty),w,X)$ into $L^{1,\infty}(\R\times
(0,\infty),w)$, for every $w\in A^*_1(\R\times (0,\infty))$.

A complete study about vector valued Calder\'on-Zygmund theory on
spaces of homogeneous type can be encountered in \cite{RRT},
\cite{RT1} and \cite{RT2}.

We have that, for every $f\in Z\otimes C_c^\infty(0,\infty)$,
$$
R_\mu f(t,x)=\INT\INT\delta_{\mu+1}\delta_{\mu} W_\tau^\mu(x,y)
f(t-\tau, y) d\tau dy, \;\; (t,x)\not\in \text{supp}{ f},$$ and
$$
\widetilde{R_\mu}f(t,x)=\INT\INT\partial_\tau W_\tau^\mu(x,y)
f(t-\tau, y)d\tau  dy\;\; (t,x)\not\in \text{supp}{ f}.
$$
We consider the kernel functions defined as follows

$$
K_\mu(t,x;\tau,y)=\delta_{{\mu+1}}\delta_\mu W_{t-\tau}^\mu (x,y)
\chi_{\DO}(t-\tau), \;\; x,y\in\DO, \;\; t,\tau\in\R,
$$
and

$$
\widetilde{K_\mu}(t,x;\tau,y)=-\partial_\tau W_{t-\tau}^\mu
(x,y)\chi_{\DO}(t-\tau), \;\; x,y\in\DO, \;\; t,\tau\in\R.
$$

It is clear that, for every $f\in Z\otimes C^\infty_c(0,\infty)$,
$R_\mu f(t,x)=\int_{\R}\INT K_\mu(x,t;y,\tau)f(\tau,y)dyd\tau,$ and
$\widetilde{R_\mu}f(t,x)=\int_{\R}\INT
\widetilde{K_\mu}(x,t;y,\tau)f(\tau,y)dyd\tau,$
$(t,x)\not\in\text{supp} f$.

We remark that $d((t,x), (s,y))=|x-y|+\sqrt{|t-s|}, $ $t,s\in\R$ and
$x,y\in\DO$.

\begin{proposition}
    Let $\mu>1/2$ or $\mu=-1/2$. The kernels $K_\mu$ and $\widetilde{K_\mu}$ are standard Calder\'on-Zygmund with respect to the homogeneous type space $(\R\times\DO,m,d)$.

\end{proposition}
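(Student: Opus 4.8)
The plan is to verify directly the two defining properties of a standard Calderón--Zygmund kernel, which in our scalar situation ($X=Y=\C$) reduce to pointwise estimates on $W_s^\mu$ and its derivatives. Writing $s=t-\tau>0$ and recalling that $d((t,x),(\tau,y))=s^{1/2}+|x-y|$, the homogeneity of the measure $ds\,dx$ (of order $3$ for $d$) dictates the target bounds. For the size condition (a) it suffices to show
\begin{equation*}
|\partial_s W_s^\mu(x,y)| + |\delta_{\mu+1}\delta_\mu W_s^\mu(x,y)| \le \frac{C}{\big(s^{1/2}+|x-y|\big)^{3}}, \qquad s,x,y\in\DO,
\end{equation*}
where $\delta_\mu,\delta_{\mu+1}$ act in $x$. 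For the regularity condition (b) it suffices to gain one power of $d$ per spatial derivative and two per time derivative, i.e., with $\Phi$ denoting either $\partial_s W_s^\mu$ or $\delta_{\mu+1}\delta_\mu W_s^\mu$,
\begin{equation*}
|\partial_x \Phi| + |\partial_y \Phi| \le \frac{C}{\big(s^{1/2}+|x-y|\big)^{4}}, \qquad |\partial_s \Phi| \le \frac{C}{\big(s^{1/2}+|x-y|\big)^{5}},
\end{equation*}
the time estimate being used together with $|s-s_0|\le d((\tau,y),(\tau_0,y_0))^2$ and the fact that $d((\tau,y),(\tau_0,y_0))<d((t,x),(\tau_0,y_0))$ in the regime of (b). The symmetry $W_s^\mu(x,y)=W_s^\mu(y,x)$ collapses the two halves of (b) into a single family of estimates, and the cut-off $\chi_{\DO}(t-\tau)$ causes no difficulty: in the regime of (b) the kernel is either identically zero near the relevant points or is evaluated at $s\to 0^+$ with $|x-y|$ large, where it is super-exponentially small.

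To produce these pointwise bounds I would \emph{not} expand $\delta_\mu=\partial_x-(\mu+\tfrac12)x^{-1}$ termwise, since this manufactures spurious singularities at $x=0$; instead I would use the factored form $\delta_\mu=x^{\mu+1/2}\frac{d}{dx}x^{-\mu-1/2}$ together with \eqref{P3}. A short computation then writes $\delta_\mu W_s^\mu$, $\delta_{\mu+1}\delta_\mu W_s^\mu$, $\partial_s W_s^\mu$ and all their first derivatives as finite combinations of $\frac{(xy)^{1/2}}{s^{k}}e^{-(x^2+y^2)/4s}$ times monomials in $x,y$ multiplying $I_\nu\!\left(\frac{xy}{2s}\right)$ with $\nu$ ranging over finitely many values near $\mu$ (all exceeding $-1/2$ when $\mu>1/2$); for instance $\delta_\mu W_s^\mu(x,y)=\frac{(xy)^{1/2}}{4s^{2}}e^{-(x^2+y^2)/4s}\big(yI_{\mu+1}(\tfrac{xy}{2s})-xI_\mu(\tfrac{xy}{2s})\big)$. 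One then splits the estimates according to the size of the Bessel argument: in the region $xy\le s$ one inserts the small-argument asymptotics \eqref{P1}, and in the region $xy\ge s$ the large-argument expansion \eqref{P2}. In each case the outcome is dominated by $s^{-a/2}e^{-c(x-y)^2/s}$ for the appropriate $a\in\{3,4,5\}$, which is bounded by $C(s^{1/2}+|x-y|)^{-a}$ via the elementary inequality $s^{-a/2}e^{-c(x-y)^2/s}\le C(s^{1/2}+|x-y|)^{-a}$. Several ingredients are already in Section 2 (compare \eqref{V1}, \eqref{T3}, \eqref{T2}, \eqref{A5bis}, \eqref{A6bis}, \eqref{V3}), and it is convenient to organise the $y$-integration as $\{y<x/2\}\cup\{x/2\le y\le 2x\}\cup\{y>2x\}$, treating the central band by comparison with the classical heat kernel $W_s(x-y)$ (whose parabolic kernels are standard Calderón--Zygmund by \cite{PST}) and the outer bands directly.

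The main obstacle will be condition (b) in the regions near the boundary $x\to 0^+$ or $y\to 0^+$, and this is precisely where the hypothesis $\mu>1/2$ is forced. In the small-argument region the kernels carry a factor $(xy)^{\mu+1/2}$ coming from \eqref{P1}; differentiating it in $y$ produces $y^{\mu-1/2}$, and for the resulting term to stay bounded by the required negative power of $d$ as $y\to 0^+$ one needs $\mu\ge 1/2$. The limiting value $\mu=-1/2$ must be handled apart: there the potential vanishes, $\Delta_{-1/2}=\partial_x^2$, and $W_s^{-1/2}(x,y)=\frac{1}{2\sqrt{\pi s}}\big(e^{-(x-y)^2/4s}+e^{-(x+y)^2/4s}\big)=W_s(x-y)+W_s(x+y)$ is the Neumann reflection of the Euclidean heat kernel, so both kernels inherit the standard estimates directly from the classical parabolic case of \cite{PST}. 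I expect the genuinely delicate points to be the bookkeeping of the Bessel indices $\nu$ under repeated use of \eqref{P3} and the verification that the singular coefficients generated by $\delta_\mu$, $\delta_{\mu+1}$ and by the potential $x^{-2}$ are absorbed by the vanishing factor $(xy)^{\mu+1/2}$ exactly when $\mu>1/2$; once the pointwise bounds displayed above are established, properties (a) and (b) follow at once.
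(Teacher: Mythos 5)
Your main line of attack is the same as the paper's: write $\mathbb{K}_\mu(x,y,s)=\delta_{\mu+1}\delta_\mu W_s^\mu(x,y)$ and $\mathcal{K}_\mu(x,y,s)=\partial_sW_s^\mu(x,y)$ explicitly through \eqref{P3}, prove the pointwise bounds $|K|\le C d^{-3}$, $|\partial_xK|+|\partial_yK|\le Cd^{-4}$, $|\partial_sK|\le Cd^{-5}$ by splitting into the regimes $xy\le s$ (insert \eqref{P1}) and $xy\ge s$ (insert \eqref{P2}), and then deduce conditions (a) and (b) from the mean value theorem; your identification of the obstruction — the factor $(xy)^{\mu+1/2}$ producing $y^{\mu-1/2}$ with coefficient $\mu+\tfrac12$ under $\partial_y$ — is exactly the point where the paper invokes the hypothesis. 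Two deviations, however, deserve comment. First, for the $s$-derivative bounds the paper does not differentiate the Bessel expressions termwise: it extends $\mathbb{K}_\mu(x,y,\cdot)$ and $\mathcal{K}_\mu(x,y,\cdot)$ holomorphically to the sector $|\mathrm{Arg}\,z|\le \pi/4$, proves there the same size bound $C|z|^{-3/2}e^{-c(x-y)^2/|z|}$, and then gains the extra power of $s$ by the Cauchy integral formula. Your termwise differentiation is feasible in principle, but in the regime $xy\ge s$ it forces you to re-verify, after differentiating, the cancellations among the leading terms of the expansions of $I_\mu$, $I_{\mu+1}$, $I_{\mu+2}$, $I_{\mu+3}$ (and likewise for the mixed derivative $\partial_s\partial_xW_s^\mu$ needed for $\widetilde{K_\mu}$); the Cauchy-formula trick inherits those cancellations for free from the undifferentiated bound, and this is what makes the paper's proof tractable.

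The genuine gap is your treatment of $\mu=-1/2$ for $K_\mu$. Expanding the first-order operators gives $\delta_{\mu+1}\delta_\mu=\partial_x^2-\frac{2(\mu+1)}{x}\partial_x+\frac{(\mu+1/2)(\mu+5/2)}{x^2}$, so at $\mu=-1/2$ the zeroth-order term vanishes but the drift does not: $\delta_{1/2}\delta_{-1/2}=\partial_x^2-\frac1x\partial_x\neq\partial_x^2$. Hence $K_{-1/2}$ is \emph{not} $\partial_x^2\big[W_s(x-y)+W_s(x+y)\big]$ and does not "inherit the standard estimates directly" from \cite{PST}: the extra term $\frac1x\partial_x\big[W_s(x-y)+W_s(x+y)\big]$ must be estimated separately, and the naive bound $\frac1x\cdot Cs^{-1}e^{-c(x-y)^2/s}$ is not dominated by $d^{-3}$ as $x\to0^+$; one must exploit the cancellation coming from the evenness of $W_s^{-1/2}(\cdot,y)$ in $x$ (its $x$-derivative vanishes at $x=0$), which is a genuine additional argument. (For $\widetilde{K}_{-1/2}$ your reflection shortcut is fine, since $|x-y|\le x+y$ makes the reflected kernel satisfy the same bounds.) The simplest repair is the one implicit in the paper: run the general Bessel computation for all admissible $\mu$ and observe that every term obstructing the range $-1/2<\mu\le 1/2$ carries the coefficient $\mu+\tfrac12$, which vanishes identically at $\mu=-1/2$; then no separate case is needed. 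A last small point: your claim that the symmetry $W_s^\mu(x,y)=W_s^\mu(y,x)$ collapses the two halves of condition (b) is valid only for $\widetilde{K_\mu}$; the kernel $K_\mu$ acts by $\delta_{\mu+1}\delta_\mu$ in $x$ alone and is not symmetric (indeed the paper notes $R_\mu$ is not selfadjoint on $L^2$), so the $\partial_x$ and $\partial_y$ estimates are genuinely distinct computations — which, to be fair, you do plan to carry out.
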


\begin{proof}
Firstly we analyze $K_\mu$. We consider the function,
 $$\mathbb{K}_\mu(x,y,s)=\delta_{\mu+1}\delta_\mu W_s^\mu(x,y)\chi_{(0,\infty)}(s),\;\; x,y\in\DO, \text{and} \;\; s\in\R.$$
 According to (\ref{P3}) we have that
\begin{align}\label{eq2.1}
\mathbb{K}&_\mu(x,y,s)=x^{\mu+5/2}\left(\frac{1}{x}\frac{\partial}{\partial x}\right)^{2}\left(\bigg(\frac{xy}{2s}\bigg)^{-\mu} I_\mu\bigg(\frac{xy}{2s}\bigg)e^{-\frac{x^2+y^2}{4s}}\right)\frac{y^{\mu+1/2}}{(2s)^{\mu+1}}\nonumber\\
&=\frac{x^{\mu+5/2}y^{\mu+1/2}}{(2s)^{\mu+1}}\left(\frac{1}{x}\frac{\partial}{\partial x}\right)\left[ \left(\frac{y}{2sx}\bigg(\frac{xy}{2s}\bigg)^{-\mu} I_{\mu+1}\bigg(\frac{xy}{2s}\bigg)-\frac{1}{2s}\bigg(\frac{xy}{2s}\bigg)^{-\mu}I_\mu\bigg(\frac{xy}{2s}\bigg)\right)e^{-\frac{x^2+y^2}{4s}} \right]\nonumber\\
&=\frac{x^{\mu+5/2}y^{\mu+1/2}}{(2s)^{\mu+1}}\bigg[ \left(\frac{y}{2s}\right)^3\bigg(\frac{xy}{2s}\bigg)^{-\mu-1}I_{\mu+2}\bigg(\frac{xy}{2s}\bigg)\frac{1}{x}-\frac{y}{4s^2x}\bigg(\frac{xy}{2s}\bigg)^{-\mu}I_{\mu+1}\bigg(\frac{xy}{2s}\bigg)\nonumber\\
&-\frac{1}{2s}\bigg(\frac{y}{2sx}\bigg(\frac{xy}{2s}\bigg)^{-\mu}I_{\mu+1}\bigg(\frac{xy}{2s}\bigg)
-\frac{1}{2s}\bigg(\frac{xy}{2s}\bigg)^{-\mu}I_{\mu}\bigg(\frac{xy}{2s}\bigg) \bigg)\bigg]e^{-\frac{x^2+y^2}{4s}}\nonumber\\
&=\frac{x^{\mu+5/2}y^{\mu+1/2}}{(2s)^{\mu+3}}\bigg(\frac{y^3}{2s x}\bigg(\frac{xy}{2s}\bigg)^{-\mu-1}I_{\mu+2}\bigg(\frac{xy}{2s}\bigg)-\frac{2y}{x}\bigg(\frac{xy}{2s}\bigg)^{-\mu}I_{\mu+1}\bigg(\frac{xy}{2s}\bigg)\nonumber\\
&\hspace{5mm}+\bigg(\frac{xy}{2s}\bigg)^{-\mu}I_{\mu}\bigg(\frac{xy}{2s}\bigg)
\bigg)e^{-\frac{x^2+y^2}{4s}}, \;\; x,y,s\in\DO.
\end{align}
By taking into account (\ref{P1}) we obtain
\begin{align}\label{X1}
|\mathbb{K}_\mu(x,y,s)|&\le C  \left( \frac{xy^3}{s^{7/2 }}+\frac{xy}{s^{5/2}}+\frac{x^2}{s^{5/2}}\right)e^{-\frac{x^2+y^2}{4s}}\nonumber\\
&\le   \frac{C}{(\sqrt{s}+|x-y|)^3}, \;\; x,y,s\in\DO, \;\; xy\le s.
\end{align}
On the other hand, we can write
\begin{align*}
\mathbb{K}_\mu&(x,y,s)=\frac{x^{\mu+5/2}y^{\mu+1/2}}{(2s)^{\mu+2}}\bigg(\frac{y^3}{4s^2 x}\bigg(\frac{xy}{2s}\bigg)^{-\mu-3/2}\sqrt{\frac{xy}{2s}}I_{\mu+2}\bigg(\frac{xy}{2s}\bigg)\\
&-\frac{y}{sx}\bigg(\frac{xy}{2s}\bigg)^{-\mu-1/2}\sqrt{\frac{xy}{2s}}I_{\mu+1}\bigg(\frac{xy}{2s}\bigg)+\frac{1}{2s}\bigg(\frac{xy}{2s}\bigg)^{-\mu-1/2}\sqrt{\frac{xy}{2s}}I_{\mu}\bigg(\frac{xy}{2s}\bigg)
\bigg)e^{-\frac{x^2+y^2}{4s}}, \;\; x,y,s\in\DO.
\end{align*}
By (\ref{P2}) we deduce that
 \begin{align}\label{eq2.2}
 \mathbb{K}&_\mu(x,y,s)=\frac{x^{\mu+5/2}y^{\mu+1/2}}{\sqrt{2\pi}(2s)^{\mu+2}}\bigg(\frac{xy}{2s}\bigg)^{-\mu}\bigg( \frac{y^3}{4s^2x}\bigg(\frac{xy}{2s}\bigg)^{-3/2}-\frac{y}{sx}\bigg(\frac{xy}{2s}\bigg)^{-1/2}\nonumber\\
 &+\frac{1}{2s}\bigg(\frac{xy}{2s}\bigg)^{-1/2}\bigg)\left(1+O\left(\frac{s}{xy}\right) \right) e^{-\frac{(x-y)^2}{4s}}\nonumber\\
 &=\frac{1}{\sqrt{2\pi}(2s)^2}\left(\frac{y^2-2yx+x^2}{(2s)^{1/2}}+O\left(\frac{y\sqrt{s}}{x} \right)+O(\sqrt{s})+O\left( \frac{x\sqrt{s}}{y}\right) \right)e^{-\frac{(x-y)^2}{4s}}\nonumber\\
&=\frac{1}{\sqrt{2\pi}(2s)^{3/2}}\left( \frac{(y-x)^2}{2s}+O\left(
\frac{y}{x}\right)+O(1)+O\left(\frac{x}{y}\right)\right)e^{-\frac{(x-y)^2}{4s}},
\;\; s,x,y\in\DO.
 \end{align}
 Hence, if $s,x,y\in\DO,$ and $xy\ge s$, then
 \begin{align}\label{AMI1}
 |\mathbb{K}_\mu(x,y,s)|&\le \frac{Ce^{-\frac{c(x-y)^2}{s}}}{s^{3/2}}\le \frac{C}{(\sqrt{s}+|x-y|)^3},\;\; x/2<y<2x,
 \end{align}
 and
 \begin{align}\label{AMI2}
 | \mathbb{K}_\mu(x,y,s)|&\le \frac{C}{s^{3/2}}\left( 1+\frac{x^2+y^2}{xy}\right) e^{-c\frac{(x-y)^2}{s}}\nonumber\\
 &\le \frac{C}{s^{3/2}}\left(1+\frac{\max\{x,y\}^2}{s}\right)e^{-c\frac{\max\{x,y\}^2}{s}}e^{-c\frac{(x-y)^2}{s}}\nonumber\\
&\le \frac{C}{(\sqrt{s}+|x-y|)^3},\;\; 0<y<x/2,\;\;\text{or}\;\;
2x<y<\infty.
 \end{align}

 We conclude that
 $$
  |K_\mu(x,t;y,\tau)|\le \frac{C}{(\sqrt{|t-\tau|}+|x-y|)^3},\;\; x,y\in\DO, \text{and} \;\; t, \tau\in\R.
 $$

 According to \eqref{eq2.2} we have that $\displaystyle\lim_{s\to 0^+}\mathbb{K}_\mu(x,y,s)=0$, $x,y\in \DO$, $x\neq y$. Hence, $K_\mu$ is a continuous function on
$[(\DO\times\R)\times(\DO\times\R)]\setminus D$.

 By using that $\frac{\partial}{\partial x}=\delta_{\mu+2}+\frac{\mu+5/2}{x}$ and (\ref{P3}), from \eqref{eq2.1} it follows that

 \begin{align}\label{X2}
  \partial&_x \mathbb{K}_\mu(x,y,s)=\frac{x^{\mu+7/2}y^{\mu+1/2}}{(2s)^{\mu+3}}\bigg(-\frac{y^3}{4s^2 x}\bigg( \frac{xy}{2s}\bigg)^{-\mu-1}I_{\mu+2}\bigg( \frac{xy}{2s}\bigg)
  +\frac{y}{sx}\bigg( \frac{xy}{2s}\bigg)^{-\mu } I_{\mu+1}\bigg(
  \frac{xy}{2s}\bigg)\nonumber\\
  &-\frac{1}{2s}\bigg( \frac{xy}{2s}\bigg)^{-\mu}I_\mu\bigg( \frac{xy}{2s}\bigg)
  +\frac{y^5}{(2s)^3x}\bigg( \frac{xy}{2s}\bigg)^{-\mu-2}I_{\mu+3}\bigg(
  \frac{xy}{2s}\bigg)-\frac{y^3}{2s^2x}\bigg( \frac{xy}{2s}\bigg)^{-\mu-1}I_{\mu+2}\bigg(
  \frac{xy}{2s}\bigg)\nonumber\\
 &
  +\frac{y}{2sx}\bigg( \frac{xy}{2s}\bigg)^{-\mu} I_{\mu+1}\bigg( \frac{xy}{2s}\bigg)\bigg)e^{-\frac{x^2+y^2}{4s}}+\frac{\mu+\frac{5}{2}}{x}\frac{x^{\mu+5/2}y^{\mu+1/2}}{(2s)^{\mu+3}}\bigg(\frac{y^3}{2s x}\bigg(\frac{xy}{2s}\bigg)^{-\mu-1}I_{\mu+2}\bigg(\frac{xy}{2s}\bigg)\nonumber\\
  &-\frac{2y}{x}\bigg(\frac{xy}{2s}\bigg)^{-\mu}I_{\mu+1}\bigg(\frac{xy}{2s}\bigg)+\bigg(\frac{xy}{2s}\bigg)^{-\mu}I_{\mu}\bigg(\frac{xy}{2s}\bigg) \bigg)e^{-\frac{x^2+y^2}{4s}}\nonumber\\
 &\,\nonumber\\
  &=\frac{x^{\mu+5/2}y^{\mu+1/2}}{(2s)^{\mu+3}} e^{-\frac{x^2+y^2}{4s}}\bigg[ \frac{y^5}{(2s)^3}\left(\frac{x}{2s} \right)^{-\mu-2}I_{\mu+3}\bigg(\frac{xy}{2s}\bigg)+\bigg(\frac{xy}{2s}\bigg)^{-\mu-1}I_{\mu+2}\bigg(\frac{xy}{2s}\bigg)\bigg( -\frac{3y^3}{4s^2}
  +\frac{(\mu+\frac{5}{2}) y^3}{2sx^2}\bigg) \nonumber\\
  &+\bigg(\frac{xy}{2s}\bigg)^{-\mu}
  I_{\mu+1}\bigg(\frac{xy}{2s}\bigg)\bigg(\frac{3y}{2s}-2\frac{(\mu+\frac{5}{2})y}{x^2}\bigg)+\bigg(\frac{xy}{2s}\bigg)^{-\mu} I_\mu\bigg(\frac{xy}{2s}\bigg)\bigg(-\frac{x}{2s}+\frac{\mu+\frac{5}{2}}{x}\bigg) \bigg],\;\; x,y,s\in\DO,
  \end{align}
 and since $\partial_y=\delta_\mu+\frac{\mu+1/2}{y}$,

 \begin{align}\label{eq2.3}
  \partial_y \mathbb{K}_\mu(x,y,s)&=\frac{x^{\mu+5/2}y^{\mu+1/2}}{(2s)^{\mu+2}}\bigg[\frac{4y^3}{(2s)^3}\bigg(\frac{xy}{2s}\bigg)^{-\mu-2}I_{\mu+2}\bigg(\frac{xy}{2s}\bigg)+\frac{y^4x}{(2s)^4}\bigg(\frac{xy}{2s}\bigg)^{-\mu-2}I_{\mu+3}\bigg(\frac{xy}{2s}\bigg)\nonumber\\
  &-\frac{y}{s^2}\bigg(\frac{xy}{2s}\bigg)^{-\mu-1}I_{\mu+1}\bigg(\frac{xy}{2s}\bigg)
  -\frac{y^2x}{4s^3}\bigg(\frac{xy}{2s}\bigg)^{-\mu-1}I_{\mu+2}\bigg(\frac{xy}{2s}\bigg)+\frac{x}{(2s)^2}\bigg(\frac{xy}{2s}\bigg)^{-\mu}I_{\mu+1}\bigg(\frac{xy}{2s}\bigg)\nonumber\\
  &+\bigg(\frac{y^4}{(2s)^3}\bigg(\frac{xy}{2s}\bigg)^{-\mu-2}I_{\mu+2}\bigg(\frac{xy}{2s}\bigg)-\frac{y^2}{2s^2}\bigg(\frac{xy}{2s}\bigg)^{-\mu-1}I_{\mu+1}\bigg(\frac{xy}{2s}\bigg)+\frac{1}{2s}\bigg(\frac{xy}{2s}\bigg)^{-\mu}I_{\mu}\bigg(\frac{xy}{2s}\bigg)\bigg)\nonumber\\
  &\cdot\left(-\frac{y}{2s}+\frac{\mu+1/2}{y} \right) \bigg] e^{-\frac{x^2+y^2}{4s}}\nonumber\\
&\,\nonumber\\
  &=\frac{x^{\mu+5/2}y^{\mu+1/2}}{(2s)^{\mu+3}}e^{-\frac{x^2+y^2}{4s}}\bigg[\frac{xy^4}{(2s)^3}\bigg(\frac{xy}{2s}\bigg)^{-\mu-2}I_{\mu+3}\bigg(\frac{xy}{2s}\bigg)\nonumber\\
  &+\bigg(\frac{xy}{2s}\bigg)^{-\mu-1}I_{\mu+2}\bigg(\frac{xy}{2s}\bigg)\left(\frac{4y^2}{2sx}-\frac{xy^2}{2s^2}+\frac{y^3}{2sx}\left(-\frac{y}{2s}+\frac{\mu+1/2}{y}\right)\right)\nonumber\\&+\bigg(\frac{xy}{2s}\bigg)^{-\mu}I_{\mu+1}\bigg(\frac{xy}{2s}\bigg)\left(-\frac{4}{x}+\frac{x}{2s}-\frac{2y}{x}\left(-\frac{y}{2s}+\frac{\mu+1/2}{y}\right)\right)\nonumber\\
  &+\bigg(\frac{xy}{2s}\bigg)^{-\mu}I_{\mu}\bigg(\frac{xy}{2s}\bigg)\left(-\frac{y}{2s}+\frac{\mu+1/2}{y}\right)\bigg],\;\; x,y,s\in\DO.
 \end{align}
 It is clear that \begin{equation} \label{eq2.4}
 \partial_x \mathbb{K}_\mu(x,y,s)=\partial_y \mathbb{K}_\mu(x,y,s)=0,\;\; x,y\in\DO,\;\; s\in(-\infty,0].
 \end{equation}

 Now, we estimate $\partial_x\mathbb{K}_\mu(x,y,s)$. By (\ref{P1}), \eqref{X2} leads that
 \begin{align*}
 |\partial&_x\mathbb{K}_\mu(x,y,s)|\le C \frac{x^{\mu+5/2}y^{\mu+1/2}}{s^{\mu+3}} e^{-\frac{x^2+y^2}{4s}}\bigg( \frac{y^5}{(2s)^3}\frac{xy}{2s}+\left( \frac{y^3}{s^2}+\frac{y^3}{sx^2}\right)\frac{xy}{2s}+\left(\frac{y}{s}+\frac{y}{x^2}\right)\frac{xy}{2s}+\frac{x}{s}+\frac{1}{x}\bigg)\\
 &\le C \bigg(\frac{y^{\mu+13/2}x^{\mu+7/2}}{\left(s+ x^2+y^2\right)^{\mu+7}}+ \frac{y^{\mu+9/2}x^{\mu+7/2}}{\left(s+ x^2+y^2\right)^{\mu+6}}+ \frac{y^{\mu+9/2}x^{\mu+3/2}}{\left(s+ x^2+y^2\right)^{\mu+5}}\\
 &+ \frac{y^{\mu+5/2}x^{\mu+7/2}}{\left(s+ x^2+y^2\right)^{\mu+5}}
 + \frac{y^{\mu+5/2}x^{\mu+3/2}+y^{\mu+1/2}x^{\mu+7/2}}{\left(s+ x^2+y^2\right)^{\mu+4}}+ \frac{y^{\mu+1/2}x^{\mu+3/2}}{\left(s+ x^2+y^2\right)^{\mu+3}}\bigg) \\
 &\le \frac{C}{(\sqrt{s}+x+y)^4}\le\frac{C}{(\sqrt{s}+|x-y|)^4},\;\; s,x,y\in\DO \;\;\text{and}\;\; \frac{xy}{s}\le 1.
 \end{align*}

 On the other hand, (\ref{P2}) allows us to deduce from \eqref{X2}
 that

 \begin{align*}
 \partial_x&\mathbb{K}_\mu (x,y,s)=\frac{x^2}{(2s)^{5/2}}\frac{e^{-\frac{(x-y)^2}{4s}}}{\sqrt{2\pi}}\bigg[ \bigg\{ \frac{y^5}{(2s)^3}\left(\frac{xy}{2s}\right)^{-2}+\left(\frac{xy}{2s}\right)^{-1}\left(-\frac{3y^3}{4s^2}+\frac{\mu+5/2}{2s}\frac{y^3}{x^2}\right)\\
    &+\frac{3y}{2s}-\frac{2(\mu+5/2)y}{x^2}-\frac{x}{2s}+\frac{\mu+5/2}{x}\bigg\}   \left(1+ O\left(\frac{s}{xy}\right)\right)\bigg]\\
     &=\frac{x^2}{(2s)^{5/2}}\frac{e^{-\frac{(x-y)^2}{4s}}}{\sqrt{2\pi}}\left[\frac{y^3}{2sx^2}-\frac{3y^2}{2sx}+\frac{3y}{2s}-\frac{x}{2s}+(\mu+5/2)\left(\frac{y^2}{x^3}-\frac{2y}{x^2}+\frac{1}{x} \right)\right]O(1)\\
    &=\frac{x^2}{(2s)^{5/2}}\frac{e^{-\frac{(x-y)^2}{4s}}}{\sqrt{2\pi}}\left[(\mu+5/2)\frac{(x-y)^2}{x^3}+\frac{(y-x)^3}{2sx^2}\right] O(1), \;\; x,y,s\in\DO,\;\text{and}\;\frac{xy}{s}\ge 1.
 \end{align*}
 Then,  $ | \partial_x\mathbb{K}_\mu (x,y,s)|\le C\frac{e^{-c\frac{(x-y)^2}{s}}}{s^{5/2}}\left(\frac{(x-y)^2}{x}+\frac{|x-y|^3}{s}\right)$, $x,y,s\in\DO$ and $\frac{xy}{s}\geq 1$.
 We have that, for every $x,y,s\in\DO$, and $xy\ge s$,
 \begin{equation}\label{eq2.6}
  \frac{1}{x}\le C \left\{ \begin{array}{c}
  \frac{1}{\sqrt{xy}}\le \frac{C}{\sqrt{s}},\;\;\;\;\;\;\quad \quad\;\; x>y/2,\\
  \,\\
  \frac{y}{s}\le \frac{|y-x|+x}{s}\le\frac{2|y-x|}{s}, \;\;0<x<\frac{y}{2}.
  \end{array} \right.
 \end{equation}
 Hence, we get
 \begin{align*}
 |\partial_x\mathbb{K}_\mu(x,y,s)|\le C e^{-\frac{(x-y)^2}{4s}}\left( \frac{(x-y)^3}{s^{7/2}}+\frac{(x-y)^2}{s^3}\right)\le \frac{C}{(\sqrt{s}+|x-y|)^4}, \;\;x,y,s\in\DO, \; xy\ge s.
 \end{align*}
We conclude that  \begin{equation}\label{eq2.7}
|\partial_x\mathbb{K}_\mu(x,y,s)|\le  \frac{C}{(\sqrt{s}+|x-y|)^4},
\;\;x,y,s\in\DO.
\end{equation}

We now estimate $\partial_y \mathbb{K}_\mu(x,y,s)$. By (\ref{P1}),
from \eqref{eq2.3} we deduce that
\begin{align*}
 |\partial_y\mathbb{K}_\mu(x,y,s)|&\le  Ce^{-\frac{x^2+y^2}{4s}}\bigg(  \frac{x^{\mu+9/2}y^{\mu+4/2}}{s^{\mu+7}}+ \frac{x^{\mu+5/2}y^{\mu+7/2}}{s^{\mu+5}}+ \frac{x^{\mu+5/2}y^{\mu+11/2}}{s^{\mu+6}}
+\frac{x^{\mu+5/2}y^{\mu+3/2}}{s^{\mu+4}}\\&+ \frac{x^{\mu+9/2}y^{\mu+3/2}}{s^{\mu+5}}+ \frac{x^{\mu+9/2}y^{\mu+7/2}}{s^{\mu+6}}+ (\mu+\frac{1}{2})\frac{x^{\mu+5/2}y^{\mu-1/2}}{s^{\mu+3}}\bigg)\\
&\le\frac{C}{(\sqrt{s}+|x-y|)^4}, \;\; s,x,y\in\DO, \; xy\le s.
\end{align*}
Note that in the last estimate we use that $\mu>1/2$ or $\mu=-1/2$.

By \eqref{eq2.3} we can write
\begin{align*}
\partial_y\mathbb{K}_\mu(x,y,s)&= e^{-\frac{x^2+y^2}{4s}}\frac{x^2}{(2s)^{5/2}}\bigg(\sqrt{\frac{xy}{2s}}I_{\mu+3}\left(\frac{xy}{2s}\right)\frac{y^2}{2sx}\\
&+\sqrt{\frac{xy}{2s}}I_{\mu+2}\left(\frac{xy}{2s}\right)\left(\frac{4y}{x^2}-\frac{y}{s}+\frac{y^2}{x^2}\left(-\frac{y}{2s}+\frac{\mu+1/2}{y}\right)\right)\\
&+\sqrt{\frac{xy}{2s}}I_{\mu+1}\left(\frac{xy}{2s}\right)\left(-\frac{4}{x}+\frac{x}{2s}-\frac{2y}{x}\left(-\frac{y}{2s}+\frac{\mu+1/2}{y}\right)\right)\\
&+\sqrt{\frac{xy}{2s}}I_{\mu}\left(\frac{xy}{2s}\right)\left(-\frac{y}{2s}+\frac{\mu+1/2}{y}\right)
\bigg), \;\;x,y,s\in\DO.
\end{align*}
Then, (\ref{P2}) leads to
\begin{align*}
\partial_y&\mathbb{K}_\mu(x,y,s)=\frac{e^{-\frac{(x-y)^2}{4s}}}{\sqrt{2\pi}}\frac{x^2}{(2s)^{5/2}}\left(1+O\left(\frac{s}{xy}\right)\right)\bigg(\frac{y^2}{2sx}+\frac{4y}{x^2}-\frac{y}{s}-\frac{y^3}{x^22s}-\frac{4}{x}+\frac{x}{2s}+\frac{y^2}{xs}-\frac{y}{2s}\\
&+(\mu+1/2)\left(\frac{y}{x^2}-\frac{2}{x}+\frac{1}{y}\right)\bigg)\\
&={e^{-\frac{(x-y)^2}{4s}}}O(1)\frac{x^2}{s^{5/2}}\bigg[
(\mu+1/2)\frac{(y-x)^2}{x^2 y}+\frac{(x-y)^3}{x^2
2s}+\frac{4(y-x)}{x^2}\bigg],\;\; s,x,y\in\DO,\; xy\ge s.
\end{align*}
Hence, by \eqref{eq2.6} we obtain
\begin{align*}
 |\partial_y\mathbb{K}_\mu(x,y,s)|&\le C{e^{-\frac{(x-y)^2}{4s}}} \bigg(\frac{|y-x|^2}{s^{5/2} y}+\frac{|x-y|^3}{s^{7/2}}+\frac{|y-x|}{s^{5/2}}\bigg)\\
 &\le C{e^{-\frac{(x-y)^2}{4s}}} \bigg(\frac{|y-x|^2}{s^{3} }+\frac{|x-y|^3}{s^{7/2}}+\frac{|y-x|}{s^{5/2}}\bigg)\\
 &\le\frac{C}{(\sqrt{s}+|x-y|)^4}, \;\;x,y,s\in\DO, \;\; xy\ge s.
\end{align*}
We conclude that
\begin{align}\label{eq2.8}
|\partial_y\mathbb{K}_\mu(x,y,s)|\le\frac{C}{(\sqrt{s}+|x-y|)^4},
\;\;&x,y,s\in\DO.
\end{align}

Now we estimate $\partial_s\mathbb{K}_\mu(x,y,s)$. Let $x,y\in
(0,\infty)$. We define the function
$\varphi_{x,y}(z)=\mathbb{K}(x,y,z),\,\,\,z\in
\mathbb{C},\,\,\,\text{Re}\,z>0 $. Thus, $\varphi_{x,y}$ is an
holomorphic function in $\{z\in \mathbb{C}:Re\,z>0\}$. Note that, if
$a>0$, $Arg\,\frac{a}{z}=-Arg(z)$ and
$Re\Bigg(\frac{a}{z}\Bigg)=\frac{a}{|z|^2}Re\,z$, $z\in \mathbb{C}$.
Note that $Re z\ge\frac{\sqrt{2}}{2}|z|$ provided that   $|Arg
z|\le\frac{\pi}{4}.$ Hence, $\left|e^{-\frac{(x-y)^2}{4z}}
\right|=e^{\frac{-Re z(x-y)^2}{4|z|^2}}\le
e^{-\frac{\sqrt{2}}{8}\frac{(x-y)^2}{|z|}},$ and
$\left|e^{-\frac{x^2+y^2}{4z}} \right|\le
e^{-\frac{\sqrt{2}}{8}\frac{x^2+y^2}{|z|}}$, when $|Arg
z|\le\frac{\pi}{4}.$

According (\ref{P1}) and (\ref{P2}) as in (\ref{X1}) and
(\ref{eq2.2}), we can obtain
\begin{align*}
|\varphi_{x,y}(z)|&\le C |z|^{-3/2}e^{-cRe\frac{|x-y|^2}{z}}\le C |z|^{-3/2}e^{-c\frac{|x-y|^2Re\,z}{|z|^2}}\le C |z|^{-3/2}e^{-c\frac{|x-y|^2}{|z|}} ,\,\,\,z\in
\mathbb{C},\,\,\,|Arg(z)|<\frac{\pi}{4}.
\end{align*}
By using Cauchy integral formula we get
$$
|\partial_s\mathbb{K}_\mu(x,y,s)|\le
C\frac{1}{s^{5/2}}e^{-c\frac{(x-y)^2}{s}},\,\,\,s\in (0,\infty).
$$
Here $C$ and $c$ do not depend on $x,y\in (0,\infty)$. Then, we
obtain
\begin{equation}\label{eq2.9}
|\partial_s\mathbb{K}_\mu(x,y,s)| \le\frac{C}{(\sqrt{s}+|x-y|)^5},
\;\;x,y,s\in\DO.
\end{equation}

Hence, for every $x,y\in\DO$, $x\neq y$, $\displaystyle\lim_{s\to
0^+}\partial_s  \mathbb{K}_\mu(x,y,s)=0$.

Then, we deduce that $\mathbb{K}_\mu$ is in $C^1(
\DO\times\DO\times\R\setminus\{(x,x,0): \; x\in\DO\})$, and $K_\mu$
is in $C^1((\R\times\DO\times\R\times\DO)\setminus D)$.

    According to \eqref{eq2.7}, \eqref{eq2.8} and \eqref{eq2.9}, we have that
    $$
    |\partial_t{K}_\mu(x,t;y,\tau)|=|\partial_\tau K_\mu(x,t;y,\tau)|\le\frac{C}{(\sqrt{|t-\tau|}+|x-y|)^5},
    $$
    $$
        |\partial_x{K}_\mu(x,t;y,\tau)|\le\frac{C}{(\sqrt{|t-\tau|}+|x-y|)^4}, \;\;\text{and} \;\;|\partial_y{K}_\mu(x,t;y,\tau)|\le\frac{C}{(\sqrt{|t-\tau|}+|x-y|)^4},
    $$
    for every $(x,t;y,\tau)\in[(\DO\times\R)\times(\DO\times \R)]\setminus D$, where $D=\{(x,t;x,t):\; x\in\DO\;\; \text{and}\;\; t\in\R\}.$

    Let now $x,y,y_0\in\DO$ and $t,\tau,\tau_0\in\R$ such that $d((x,t);(y_0,\tau_0))=\sqrt{|t-\tau_0|}+|x-y_0|>2(\sqrt{|\tau-\tau_0|}+|y-y_0|)=2d((y,\tau); (y_0,\tau_0)).$
    Then, $s(x,t;y_0,\tau_0)+(1-s)(x,t;y,\tau)\not\in D$, for every $s\in(0,1)$. Indeed, suppose that $s\in(0,1)$ and that $s(x,t;y_0,\tau_0)+(1-s)(x,t;y,\tau)\in D$. We have that $x=sy_0+(1-s)y$ and $t=s\tau_0+(1-s)\tau$. It follows that
    $$
\sqrt{|t-\tau_0|}+|x-y_0|=\sqrt{(1-s)|\tau_0-\tau|}+|1-s||y-y_0|\le
\sqrt{|\tau_0-\tau|}+|y-y_0|,
    $$
    and this is not possible.

    By using the mean value theorem, we can write

    \begin{align*}
    K_\mu(x,&t;y,\tau)-K_\mu(x,t;y_0,\tau_0)\\
    &=\partial_zK_\mu(x,t;z,\theta)_{\big| \substack{z=sy+(1-s)y_0\\\theta=s\tau+(1-s)\tau_0}}(y-y_0)+\partial_\theta K_\mu(x,t;z,\theta)_{\big| \substack{z=sy+(1-s)y_0\\\theta=s\tau+(1-s)\tau_0}}(\tau-\tau_0),
    \end{align*}
    for certain $s\in(0,1)$.
    Then, \begin{align*}
    |K_\mu(x,t;y,\tau)-K_\mu(x,t;y_0,\tau_0)|&\le C\bigg(\frac{|y-y_0|}{\left(\sqrt{|t-s\tau-(1-s)\tau_0|}+|x-sy-(1-s)y_0|\right)^4}\\
    & +\frac{|\tau-\tau_0|}{\left(\sqrt{|t-s\tau-(1-s)\tau_0|}+|x-sy-(1-s)y_0|\right)^5} \bigg).
    \end{align*}
    Note that $|\tau-\tau_0|\le
    \sqrt{|\tau-\tau_0|}(\sqrt{|\tau-\tau_0|}+|y-y_0|)<\frac{1}{2}\sqrt{|\tau-\tau_0|}(\sqrt{|t-\tau_0|}+|x-y_0|),$
    and
    \begin{align*}
    d((x,t),(z,\theta))&=|x-z|+\sqrt{|t-\theta|}\geq |x-y_0|-|z-y_0|+\sqrt{|t-\tau_0|}-\sqrt{|\theta-\tau_0|}\\
    &=|x-y_0|+\sqrt{|t-\tau_0|}-(s|y-y_0|+\sqrt{s}\sqrt{|\tau-\tau_0|})\\
    &\geq |x-y_0|+\sqrt{|t-\tau_0|}-(|y-y_0|+\sqrt{|\tau-\tau_0|})\\
    &>\frac{1}{2}( |x-y_0|+\sqrt{|t-\tau_0|})=\frac1{2}d((x,t);(y_0,\tau_0)).
    \end{align*}
 We get $|   K_\mu(x,t;y,\tau)-K_\mu(x,t;y_0,\tau_0)|\le C\frac{|y-y_0|+\sqrt{|\tau-\tau_0|}}{(\sqrt{|t-\tau_0|}+|x-x_0|)^4}.$

 By proceeding in a similar way we also obtain
    $   |   K_\mu(y,\tau;x,t)-K_\mu(y_0,\tau_0;x,t)|\le C\frac{|y-y_0|+\sqrt{|\tau-\tau_0|}}{(\sqrt{|t-\tau_0|}+|x-x_0|)^4}.$
    We have just proved that $K_\mu$ is a standard Calder\'on-Zygmund kernel with respect to the homogeneous type space $(\R\times (0,\infty),m,d)$.

\vspace{5mm}

Now we prove that $\widetilde{K_\mu}$ is a standard
Calder\'on-Zygmund kernel with respect to the homogeneous type space
$(\R\times (0,\infty),m,d)$.

We define
$$
\mathcal{K}_\mu(x,y,s)=\frac{\partial}{\partial
s}(W_s^\mu(x,y))\chi_{(0,\infty)}(s), \,\,\,s\in
\R\,\,\,and\,\,\,x,y\in (0,\infty).
$$

By (\ref{P3}) we get
\begin{align*}
\mathcal{K}_\mu(x,y,s)&=(xy)^{\mu+1/2}e^{-\frac{x^2+y^2}{4s}}\bigg(-\bigg(\frac{xy}{2s}\bigg)^{-\mu}I_{\mu+1}\bigg(\frac{xy}{2s}\bigg)\frac{2xy}{(2s)^{\mu+3}}\\
&+\bigg(\frac{xy}{2s}\bigg)^{-\mu}I_{\mu}\bigg(\frac{xy}{2s}\bigg)\frac{2(\mu+1)}{(2s)^{\mu+2}}+\bigg(\frac{xy}{2s}\bigg)^{-\mu}I_{\mu}\bigg(\frac{xy}{2s}\bigg)\frac{x^2+y^2}{(2s)^{\mu+3}}\bigg),\,\,\,x,y,s\in
(0,\infty).
\end{align*}

According to (\ref{P1}), we can write
        \begin{align*}
        \mathcal{K}_\mu(x,y,s)&\le C   \left( \frac{(xy)^2}{s^{\mu+4}}+\frac{1}{s^{\mu+2}}+\frac{x^2+y^2}{s^{\mu+3}}\right)(xy)^{\mu+1/2}e^{-\frac{x^2+y^2}{4s}}\\
        &\le C\left( \frac{xy}{s}\right)^{\mu+1/2}\left( \frac{x^2+y^2}{(s+x^2+y^2)^{5/2}}+\frac{1}{(s+x^2+y^2)^{3/2}}\right)\\
        &\le \frac{C}{(\sqrt{s}+|x-y|)^3}, \;\; s,x,y\in\DO \;\; \text{and} \;\; \frac{xy}{s}\le 1.
        \end{align*}
        Note that $\mu\ge -1/2$.

        Also, by (\ref{P2}), we have that
        \begin{align}\label{eq2.11}
        \mathcal{K}_\mu(x,y,s)&=\bigg[ -\frac{2xy}{(2s)^{5/2}} \left(\frac{xy}{2s} \right)^{1/2}I_{\mu+1}\left(\frac{xy}{2s}\right)-\frac{2( \mu+1)}{(2s)^{3/2}}\left(\frac{xy}{2s} \right)^{1/2}I_{\mu}\left(\frac{xy}{2s}\right)\nonumber\\
        &+\left(\frac{xy}{2s} \right)^{1/2}I_{\mu}\left(\frac{xy}{2s}\right) \frac{x^2+y^2}{(2s)^{5/2}}\bigg]e^{-\frac{x^2+y^2}{4s}}\nonumber\\
        &=\frac{e^{-\frac{(x-y)^2}{4s}}}{\sqrt{2\pi}}\left[\frac{(x-y)^2}{(2s)^{5/2}}-\frac{2(\mu+1)}{2s^{3/2}}+ O\left(\frac{1}{s^{3/2}}\right) \right], \;\; s,x,y\in\DO, \;\; \text{and}\;\; xy\ge s.
        \end{align}
        Then, $$
            |\mathcal{K}_\mu(x,y,s)|\le \frac{C}{({s}+|x-y|^2)^{3/2}}\le \frac{C}{(\sqrt{s}+|x-y|)^3}, \;\;s,x,y\in\DO \;\; \text{and} \;\; {xy}\ge s.
        $$
        We conclude that $$  |\mathcal{K}_\mu(x,y,s)|\le\frac{C}{(\sqrt{s}+|x-y|)^3}, \;\;s,x,y\in\DO.$$

        By proceeding in a similar way, (\ref{P1}) and (\ref{P2}) lead to
        \begin{equation}\label{F1}
        |\mathcal{K}_\mu(x,y,z)|\le C\frac{e^{-c\frac{(x-y)^2}{|z|}}}{|z|^{3/2}},\;\; x,y\in\DO, \; |Arg z|\le \frac{\pi}{4}.
    \end{equation}
    By using Cauchy integral formula we deduce that
    $$
    \left|\frac{\partial}{\partial s}\mathcal{K}_\mu(x,y,s)\right|\le C\frac{e^{-c\frac{(x-y)^2}{s}}}{s^{5/2}},\;\; s,x,y\in\DO.
    $$
    Then, we obtain
    $$
      \left|\frac{\partial}{\partial s}\mathcal{K}_\mu(x,y,s)\right|\le\frac{C}{(\sqrt{s}+|x-y|)^5}, \;\;s,x,y\in\DO.
      $$

On the other hand, by using (\ref{P3}) (see (\ref{T1})), we have
that
    \begin{align*}
    &\frac{\partial}{\partial x}W_s^\mu(x,y)=  \frac{\partial}{\partial x}\left(\left( \frac{xy}{2s}\right)^{-\mu}I_\mu\left( \frac{xy}{2s}\right)\left( \frac{xy}{2s}\right)^{\mu+1/2}\frac1{\sqrt{2s}}e^{-\frac{x^2+y^2}{4s}} \right)\\
    &=\bigg[\frac{y}{2s}\left( \frac{xy}{2s}\right)^{-\mu} I_{\mu+1}\left( \frac{xy}{2s}\right)\left( \frac{xy}{2s}\right)^{\mu+1/2}\frac{1}{\sqrt{2s}}+\left( \frac{xy}{2s}\right)^{-\mu} I_{\mu}\left( \frac{xy}{2s}\right)\frac{1}{(2s)^{\mu+1}}y^{\mu+1/2}(\mu+1/2)x^{\mu-1/2}\\&-\frac{x}{2s}\left( \frac{xy}{2s}\right)^{-\mu} I_{\mu}\left( \frac{xy}{2s}\right) \left( \frac{xy}{2s}\right)^{\mu+1/2}\frac{1}{\sqrt{2s}}\bigg]e^{-\frac{x^2+y^2}{4s}}, \;\; x,y,s\in\DO.
    \end{align*}
    Then, (\ref{P1}) implies that
    $$
    \left| \frac{\partial}{\partial x}W_s^\mu(x,y) \right|\le C \left(\frac{y}{s^{3/2}}+\frac{x}{s^{3/2}} \right) e^{-\frac{x^2+y^2}{4s}}\le \frac{C}{s} e^{-c\frac{(x-y)^2}{s}}, \;\; s,x,y\in\DO, \; xy\le s,
    $$
    provided that $\mu>1/2$ or $\mu=-1/2$. Also, (\ref{P2}) leads to
    \begin{align*}
    &\frac{\partial}{\partial x}W_s^\mu(x,y)=\left(\frac{y}{(2s)^{3/2}}+ \frac{1}{x\sqrt{2s}}-\frac{x}{(2s)^{3/2}}\right) \left(1+ O\left(\frac{s}{xy}\right)\right)  e^{-\frac{(x-y)^2}{4s}}\\
    &=\frac{x-y}{(2s)^{3/2}}e^{-\frac{(x-y)^2}{4s}}+ \left( \frac{1}{x\sqrt{s}}+\frac{1}{y\sqrt{s}}\right)e^{-\frac{(x-y)^2}{4s}}O(1), \;\; s,x,y\in \DO, \; xy\ge s.
    \end{align*}
    We obtain,
    $$
    \left|\frac{\partial}{\partial x}W_s^\mu(x,y)\right|\le C\left(\frac{1}{s}+\frac{1}{\sqrt{xys}}\right)e^{-\frac{(x-y)^2}{4s}}\le \frac{C}{s}e^{-c\frac{(x-y)^2}{s}}, \;\; xy\ge s, \;\; \text{and} \;\; x/2\le y\le 2x.
    $$
    and, by taking $z=\max\{x,y\}$,
        \begin{align*}
        \left|\frac{\partial}{\partial x}W_s^\mu(x,y)\right|&\le C\frac{y+x}{s^{3/2}}e^{-\frac{(x-y)^2}{4s}}\le C\frac{z}{s^{3/2}}e^{-\frac{cz^2}{s}}\le\frac{C}{s}e^{-c\frac{z^2}{s}}\\
        &\le \frac{C}{s}e^{-c\frac{(x-y)^2}{s}}, \;\; xy\ge s, \;\; \text{and} \;\; 0<y<x/2 \;\; \text{or}\;\; 2x<y.
        \end{align*}
    We conclude that
    $$
    \left|\frac{\partial}{\partial x}W_s^\mu(x,y)\right|\le C \frac{e^{-c\frac{(x-y)^2}{s}}}{s},\;\; x,y,s\in\DO.
    $$
    The same arguments, by using again (\ref{P1}), (\ref{P2}) and (\ref{P3}), allows us to obtain
    $$
    \left|\frac{\partial}{\partial x}W_s^\mu(x,y)\right|\le C \frac{e^{-c\frac{(x-y)^2}{|z|}}}{|z|},\;\; x,y\in\DO, \; |Arg z|\le \frac{\pi}{4},
    $$
    and Cauchy integral formula leads to
        \begin{equation}\label{F2}
    \left|\frac{\partial}{\partial s}\frac{\partial}{\partial x}W_s^\mu(x,y)\right|\le C \frac{e^{-c\frac{(x-y)^2}{s}}}{s^2},\;\; x,y,s\in\DO.
    \end{equation}
    Symmetries imply that
        $$
        \left|\frac{\partial}{\partial s}\frac{\partial}{\partial y}W_s^\mu(x,y)\right|\le C \frac{e^{-c\frac{(x-y)^2}{s}}}{s^2},\;\; x,y,s\in\DO.
        $$
    We get
    $$
     \left|\frac{\partial}{\partial x}\mathcal{K}_\mu(x,y,s)\right|+    \left|\frac{\partial}{\partial y}\mathcal{K}_\mu(x,y,s)\right|\le \frac{C}{(|x-y|+\sqrt{s})^4}, \;\; x,y,s\in\DO.
    $$
    Putting together the above estimates and proceeding as in the $K_\mu$-case we can prove that $\widetilde{K_\mu}$ is a standard Calder\'on-Zygmund kernel with respect to the homogeneous type space $(\R\times\DO,m,d)$.

    Thus, the proof of this proposition is finished.

\end{proof}

Now the statements in Theorem \ref{Iteo2}, (ii), follows from Calder\'on-Zygmund theorem.

\section{Proof of Theorem \ref{Iteo2}, (3), (4) and (5)}

In order to prove the parts (3), (4) and (5) in Theorem \ref{Iteo2}
we use a procedure which is different from the one employed in
Section 3 to prove Theorem \ref{Iteo2}, (2). As it was mentioned in
the introduction, Ping, Stinga and Torrea \cite{PST} investigated
$L^p$-boundedness properties of the Riesz transformations associated
with the parabolic equation (\ref{I1.1}). They studied, when a one
dimensional spatial variable is considered, the following two
operators
\begin{equation}\label{4.1}
R(f)(t,x)=\lim_{\varepsilon\to
0}\int_{\Omega_\varepsilon}\partial^2_{yy}W_s(y)f(t-s,x-y)dsdy
\end{equation}
and
\begin{equation}\label{4.2}
\widetilde{R}(f)(t,x)=\lim_{\varepsilon\to
0}\int_{\Omega_\varepsilon}\partial_{s}W_s(y)f(t-s,x-y)dsdy,
\end{equation}
where, for every $\varepsilon>0$, $\Omega_\varepsilon=\{(s,y)\in
(0,\infty)\times \mathbb{R},\sqrt{s}+y>\varepsilon\}$. Our
procedure consists, roughly speaking, in studying the
$L^p$-boundedness properties of the difference operators $R_\mu-R$
and $\widetilde{R}_\mu-\widetilde{R}$. Then, $L^p$-boundedness properties of
$R_\mu$ and $\widetilde{R}_\mu$ are deduced from the corresponding ones
of $R$ and $\widetilde{R}$, respectively, established in \cite[Theorem
2.3, (B)]{PST}.

Calder\'on-Zygmund theorem employed in the proof of Theorem
\ref{Iteo2}, (ii), in the previous section allows us to consider
weighted $L^p$-spaces but the parameter $\mu$ is restricted to
$\mu=-1/2$ or $\mu>1/2$. This comparative approach applies to the
full range of values of $\mu>-1$.

\subsection{Riesz transformation $\widetilde{R_\mu}$}
 We consider the operator
$$
\widetilde{R_\mu}(f)(t,x)=\displaystyle\lim_{\epsilon\to
0^+}\int_{\Omega_\epsilon(x)}\mathcal{K}_\mu(s,x,y)f(t-s,y)ds dy,
\;\; f\in C^\infty_c(\R\times\DO),
$$
where we name $\mathcal{K}_\mu(s,x,y)=\partial_sW_s^\mu(x,y),$
$s,x,y\in\DO$.

We shall also fix our attention in the operator
$$
\widetilde{R}(f)(t,x)=\lim_{\epsilon\to
0^+}\int_{\Omega_\epsilon(x)}\mathcal{K}(s,x,y)f(t-s,y)ds dy, \;\;
f\in C^\infty_c(\R^2),
$$
where $\mathcal{K}(s,x,y)=\partial_s W_s(x-y)$, $s\in\DO$ and
$x,y\in\R$.

Ping, Stinga and Torrea in \cite{PST} studied $L^p$-boundedness
properties for the operator $\widetilde{R}$. Our objective is to
prove $L^p$-boundedness properties for the operator
$\widetilde{R_\mu}$ by using the corresponding ones for
$\widetilde{R}$.


According to (\ref{T2}) we have that

%
\begin{align}\label{eq3.1}
&\mathcal{K}_\mu(s,x,y)-\mathcal{K}(s,x,y)
=e^{-\frac{(x-y)^2}{16s}}O\left(\frac{1}{\sqrt{s}xy} \right), \;\;
s,x,y\in\DO\;\; \text{and} \;\; s\le xy.
\end{align}


By (\ref{T3}) we obtain
\begin{align}\label{eq3.2}
&|\mathcal{K}_\mu(s,x,y)-\mathcal{K}(s,x,y) |\le|\mathcal{K}_\mu(s,x,y)|+|\mathcal{K}(s,x,y)|\nonumber\\
&\le C\left(\frac{(xy)^{\mu+1/2}}{s^{\mu+2}}
+\frac{1}{s^{3/2}}\right)e^{-\frac{x^2+y^2}{8s}}, \;\; s,x,y\in\DO,
\;\; xy\le s.
\end{align}
From \eqref{eq3.1} and \eqref{eq3.2} we deduce that
\begin{align*}
\int_0^\infty&\int_{x/2}^{3x/2}|\mathcal{K}_\mu(s,x,y)-\mathcal{K}(s,x,y) |dyds\nonumber\\
&\le C\bigg(\int_{x/2}^{3x/2}\int_0^{xy}\frac{e^{-c\frac{(x-y)^2}{s}}}{\sqrt{s}xy}dsdy+\int_{x/2}^{3x/2}\int_{xy}^\infty e^{-c\frac{x^2+y^2}{s}}\left(\frac{1}{s^{3/2}}+\frac{(xy)^{\mu+1/2}}{s^{\mu+2}}\right)dsdy \\
&\le
C\bigg(\int_{x/2}^{3x/2}\frac{\sqrt{xy}}{xy}dy+\int_{x/2}^{3x/2}\left(\frac{1}{(xy)^{1/2}}+\frac{(xy)^{\mu+1/2}}{(xy)^{\mu+1}}\right)dy
\bigg)\le C, \;\; x\in\DO.
\end{align*}

By proceeding as in \eqref{T3} we get
$$
|\mathcal{K}_\mu(s,x,y)|\le
C\frac{(xy)^{\mu+1/2}}{s^{\mu+2}}e^{-c\frac{x^2+y^2}{s}}, \;\;
s,x,y\in\DO, \; s\ge xy.
$$
Also, from (\ref{F1}), it follows that
$$
|\mathcal{K}_\mu(s,x,y)|\le
C\frac{e^{-c\frac{(x-y)^2}{s}}}{s^{3/2}}, \;\; s,x,y\in\DO, \;\;
s\le xy.
$$
Then,
\begin{align}\label{eq3.3}
\INT\int_0^{x/2}|\mathcal{K}_\mu(s,x,y)|dyds&\le C\bigg(\int_0^{x/2}\int_0^{xy}\frac{e^{-c\frac{(x-y)^2}{s}}}{s^{3/2}}dsdy+\int_0^{x/2}\int_{xy}^\infty\frac{(xy)^{\mu+1/2}}{s^{\mu+2}}e^{-c\frac{x^2+y^2}{s}}dsdy \bigg)\nonumber\\
&\le C\left(\int_0^{x/2}\int_0^{xy}\frac{e^{-c\frac{x^2}{s}}}{s^{3/2}}dsdy+\int_0^{x/2} \frac{(xy)^{\mu+1/2}}{(xy)^{\mu+1}}dy\right) \nonumber\\
&\le C \left( \int_0^{x/2}\frac{(xy)^{1/2}}{x^2}dy+1\right)\le C,
\;\; x\in\DO,
\end{align}
and when $\mu>-1/2$,

\begin{align}\label{eq3.4}
\INT \int_{3x/2}^\infty|&\mathcal{K}_\mu(s,x,y)|dyds\le C\left( \int_{3x/2}^\infty\int_0^{xy}\frac{e^{-c\frac{(x-y)^2}{s}}}{s^{3/2}}dsdy+\int_{3x/2}^\infty\int_{xy}^\infty\frac{(xy)^{\mu+1/2}}{s^{\mu+2}}e^{-c\frac{x^2+y^2}{s}}dsdy \right) \nonumber\\
&\le C\left( \int_{3x/2}^\infty\int_0^{xy}\frac{e^{-c\frac{y^2}{s}}}{s^{3/2}}dsdy+\int_{3x/2}^\infty\int_{xy}^\infty\frac{(xy)^{\mu+1/2}}{s^{\mu+2}}e^{-c\frac{x^2+y^2}{s}}dsdy \right) \nonumber\\
&\le C\left( \int_{3x/2}^\infty\int_0^{xy}\frac{y^{-2}}{s^{1/2}}dsdy+\int_{3x/2}^\infty\int_{xy}^\infty\frac{(xy)^{\mu+1/2}}{s^{\mu+2}}\frac{s^{\frac{2\mu+3}{4}}}{(x^2+y^2)^{\frac{2\mu+3}{4}}}dsdy \right) \nonumber \\
&\le
C\left(x^{1/2}\int_{3x/2}^\infty\frac{dy}{y^{3/2}}+x^{\frac{2\mu+1}{4}}\int_{3x/2}^\infty\frac{dy}{y^{\frac{2\mu+5}{4}}}\right)\le
C, \;\; x\in\DO.
\end{align}
Observe that this estimate can not be improved for $-1<\mu\le -1/2$.

We now suppose that $g$ is a complex valued continuous function with
compact support in $\DO$. We define $g_0$ as the odd extension of
$g$ to $\R$. We can write

\begin{align*}
\int_{\R}&\partial_tW_t(x-y)g_0(y)dy=\INT\partial_tW_t(x-y)g(y)dy+\int_{-\infty}^0\partial_tW_t(x-y)g(-y)dy\\
&=\INT\partial_t(W_t(x-y)-W_t(x+y))g(y)dy,\;\; x\in\R,\,\,\,
\text{and}\;\; t\in\DO.
\end{align*}
This fact and the following estimate will be useful in the sequel.

Note that
\begin{align*}
\partial_t(W_t(x-y)-W_t(x+y))&=\frac{\partial}{\partial t}\bigg[ \frac{1}{\sqrt{4\pi t}}\left(e^{-\frac{(x-y)^2}{4t}}-e^{-\frac{(x+y)^2}{4t}}\right)\bigg]=\frac{\partial}{\partial t}\left[W_t(x-y)\left(1-e^{-\frac{xy}{t}}\right)\right]\\
&=\frac{\partial}{\partial
t}\left(W_t(x-y)\right)\left(1-e^{-\frac{xy}{t}}\right)+W_t(x-y)\frac{xy}{t^2}e^{-\frac{xy}{t}},
\;\; t,x,y\in\DO.
\end{align*}
Then,
\begin{align}\label{eq3.5}
|\partial_t(W_t(x-y)-W_t(x+y))|&\le C\left(\left| \partial_tW_t(x-y)\right|\frac{xy}{t}+|W_t(x-y)|\frac{xy}{t^2}\right)\nonumber\\
&\le C \frac{e^{-c\frac{(x-y)^2}{t}}}{t^{5/2}}xy\le
C\frac{e^{-c\frac{x^2+y^2}{t}}}{t^{5/2}}xy, \;\; t,x,y \in\DO, \;\;
xy\le t.
\end{align}

Let $f\in C_c^\infty(\R\times\DO)$. We define
$$
 f_0(t,x)= \left\{ \begin{array}{c}
    f(t,x),\;\;t\in\R, x\ge 0,\\
    \,\\
 f(t,-x), \;\;t\in\R, x< 0.
 \end{array} \right.
 $$

 Thus, $f_0\in C_c^\infty(\R^2)$. We can write
 \begin{align}\label{eq3.6}
 \INT\int_{\R}
 \partial_s&W_s(x-y)f_0(t-s,y)dyds\nonumber\\&=\INT\INT(\partial_sW_s(x-y)-\partial_sW_s(x+y))f(t-s,y)dyds,\,\,\,
(t,x)\not\in \text{supp}f_0
 \end{align}
 This fact suggests the following analysis. From \eqref{eq3.5} we deduce that
 \begin{align}\label{eq3.7}
 \INT&\int_0^{x/2}|\partial_sW_s(x-y)-\partial_sW_s(x+y)|dyds\nonumber\\
  &\le C\left(\int_0^{x/2}\int_{0}^{xy}\Bigg(|\partial_sW_s(x-y)|+|\partial_sW_s(x+y)|\Bigg)dsdy
  +\int_0^{x/2}\int_{xy}^\infty \frac{xye^{-c\frac{x^2+y^2}{s}} }{s^{5/2}}dsdy \right) \nonumber\\
  &\le C\left(\int_0^{x/2}\int_{0}^{xy} \frac{e^{-c\frac{(x-y)^2}{s}} }{s^{3/2}}dsdy +\int_0^{x/2}\int_{xy}^\infty \frac{xy}{s^{5/2}}dsdy \right) \nonumber\\
   &\le C\left(\int_0^{x/2}\int_{0}^{xy} \frac{e^{-c\frac{x^2}{s}} }{s^{3/2}}dsdy +\int_0^{x/2} \frac{1}{(xy)^{1/2}}dy \right)\le C, \;\; x\in\DO,
 \end{align}
 and
 \begin{align}\label{eq3.8}
  \INT\int_{3x/2}^\infty&|\partial_sW_s(x-y)-\partial_sW_s(x+y)|dyds\nonumber\\
  &\le C\left(\int_{3x/2}^\infty\int_{0}^{xy} \frac{e^{-c\frac{(x-y)^2}{s}} }{s^{3/2}}dsdy+\int_{3x/2}^\infty\int_{xy}^\infty \frac{xye^{-c\frac{x^2+y^2}{s}} }{s^{5/2}}dsdy \right) \nonumber\\
     &\le C\left(\int_{3x/2}^\infty\int_{0}^{xy} \frac{e^{-c\frac{y^2}{s}} }{s^{3/2}}dsdy+\int_{3x/2}^\infty\int_{xy}^\infty \frac{xys}{s^{5/2}(x^2+y^2)}dsdy \right)\nonumber\\
     & \le \left( \int_{3x/2}^\infty \int_0^{xy}\frac{1}{y^3}dsdy+\int_{3x/2}^\infty\frac{\sqrt{xy}}{y^2}dy\right)\le C, \;\; x\in\DO.
 \end{align}

  Finally, we have that
  \begin{align}  \label{eq3.9}
  \INT\int_{x/2}^{3x/2}|\partial_sW_s(x+y)|dyds&\le \int_{x/2}^{3x/2} \INT\frac{e^{-c\frac{(x+y)^2}{s}} }{s^{3/2}}dsdy\le C\int_{x/2}^{3x/2}\frac{dy}{x+y}\nonumber\\
  &\le C(\log(x+3x/2)-log(x+x/2))\le C, \;\;x\in\DO.
  \end{align}

 Fix $\mu>-1/2$. Let $f\in C_c^\infty(\R\times\DO)$. We define $f_0$ as above. According to \eqref{eq3.6} and \cite[Theorem 2.3, (B)]{PST}, we can write, for every $t\in \mathbb{R}$ and $x\in (0,\infty)$,
$$
\widetilde{R_\mu}(f)(t,x)-\widetilde{R}(f_0)(t,x)=\displaystyle\lim_{\epsilon\to
0^+}\Big(\int_{\Omega_\epsilon(x)}\mathcal{K}_\mu(\tau,x,y)f(t-\tau)d\tau
dy-\int_{W_\epsilon(x)}\mathcal{K}(\tau,x,y))f(t-\tau,y)d\tau
dy\Big),
$$
where $W_\varepsilon(x)=\{(\tau,y)\in (0,\infty)\times
\mathbb{R}:\,\sqrt{\tau}+|x-y|>\varepsilon\}$, for every $x\in
\mathbb{R}$ and $\varepsilon>0$.

Then,
 \begin{small}
 \begin{align*}
 &\widetilde{R_\mu}(f)(t,x)-\widetilde{R}(f_0)(t,x)
 =\lim_{\epsilon\to 0^+}\bigg[\INT\int_{x/2, \; (\tau,y)\in\Omega_\epsilon(x)}^{2x} (\mathcal{K}_\mu(\tau,x,y)-\mathcal{K}(\tau,x,y))f(t-\tau,y) dyd\tau\\
 &+ \INT \int_{0, \; (\tau,y)\in\Omega_\epsilon(x)}^{x/2}\mathcal{K}_\mu(\tau,x,y)f(t-\tau,y) dyd\tau+\INT \int_{3x/2, \; (\tau,y)\in\Omega_\epsilon(x)}^\infty\mathcal{K}_\mu(\tau,x,y)f(t-\tau,y) dyd\tau\\
 &-\INT\int_{0, \; (\tau,y)\in\Omega_\epsilon(x)}^{x/2}(\mathcal{K}(\tau,x,y)-\mathcal{K}(\tau,x,-y))f(t-\tau,y)dy d\tau+\INT\int_{x/2, \; (\tau,y)\in\Omega_\epsilon(x)}^{2x}\mathcal{K}(\tau,x,-y)f(t-\tau,y)dy d\tau\\
 &-\INT\int_{3x/2, \; (\tau,y)\in\Omega_\epsilon(x)}^{\infty}(\mathcal{K}(\tau,x,y)-\mathcal{K}(\tau,x,-y))f(t-\tau,y)dy d\tau \bigg] ,\;\; t\in \mathbb{R},\,\,\,x\in\DO.
 \end{align*}
 \end{small}
 Note that from \eqref{eq3.3}, \eqref{eq3.4}, \eqref{eq3.5}, \eqref{eq3.7}, \eqref{eq3.8} and \eqref{eq3.9} we deduce that the last six integrals are absolutely convergent. We get
 $$
\widetilde{R_\mu}(f)(t,x)-\widetilde{R}(f_0)(t,x)=\INT\INT
T_\mu(\tau,x,y)f(t-\tau,y)dyd\tau, \;\; t\in
\mathbb{R},\,\,\,x\in\DO,
 $$
where $T_\mu(t,x,y)=
\mathcal{K}_\mu(t,x,y)-\mathcal{K}(t,x,y)+\mathcal{K}(t,x,-y)$,
$t\in \mathbb{R}$ and $x,y\in\DO$.

Note that \begin{equation}\label{eq3.10} \sup_{x\in\DO}\INT\INT
|T_\mu(\tau,x,y)|d\tau dy<\infty.
\end{equation}

We consider the  operator $\widetilde{T_\mu}(f)(t,x)=\INT\INT
T_\mu(\tau,x,y)f(t-\tau,y)d\tau dy $, $t\in\R$, $x\in\DO$.

 According to \eqref{eq3.10} we obtain
  \begin{itemize}
    \item There exists  a constant $C>0$ such that, for every $f\in L^1(\R\times\DO)$,
    \begin{align*}
    \|\widetilde{T_\mu}(f)\|_{L^1(\R\times\DO)}&=\int_{\R}\INT\left| \INT\INT T_\mu(\tau,x,y)f(t-\tau,y)d\tau dy\right|dxdt\\
    &\le \int_{\R}\INT\int_{\R}\INT \left|T_\mu(t-\tau,x,y)\right||f(\tau,y)|\chi_{\DO}(t-\tau)dyd\tau dxdt\\
            &= \int_{\R}\INT|f(\tau,y)|\INT\INT \left|T_\mu(u,x,y)\right|dxdud\tau dy\le C\|f\|_{L^1(\R\times\DO)}.
    \end{align*}
    We have used that $T_\mu(u,x,y)=T_\mu(u,y,x)$, $u,x,y\in\DO$.
    \item There exists  a constant $C>0$ such that, for every $f\in L^\infty(\R\times\DO)$,
    $$
        \|\widetilde{T_\mu}(f)\|_{L^\infty(\R\times\DO)}\le C\|f\|_{L^\infty(\R\times\DO)}.
    $$
  \end{itemize}

 Marcinkiewicz interpolation theorem allows us to conclude that $\widetilde{T_\mu}$ is a bounded operator from $L^p(\R\times \DO)$ into itself, for every $1\le p\le\infty$.

 By \cite[Theorem 2.3, (B)]{PST} we deduce that, for every $1\le p<\infty$, the operator $\widetilde{R_\mu}$ can be extended to $L^p(\R\times\DO)$ as a bounded operator from  $L^p(\R\times \DO)$ into itself when $1< p<\infty$ and from $L^1(\R\times \DO)$ into $L^{1,\infty}(\R\times \DO)$.

 As it was established in \cite[Theorem 2.3, (B)]{PST}, the maximal opertor
 $$
 \tilde{R}_*(f)(t,x)=\sup_{\epsilon>0}\left| \int_{\Omega_{\epsilon}(x)}\tilde{R}(\tau,x,y)f(t-\tau,y)d\tau dy\right|, \;\; t,x\in\R,
 $$
 is bounded from $L^p(\R^2)$ into itself, for every $1<p<\infty,$ and from $L^1(\R^2)$ into $L^{1,\infty}(\R^2)$.
 Then, the above results imply that the maximal operator
 $$
  \widetilde{R_{\mu,*}}(f)(t,x)=\sup_{\epsilon>0}\left| \int_{\Omega_{\epsilon}(x)}\widetilde{R_\mu}(\tau,x,y)f(t-\tau,y)d\tau dy\right|, \;\; t\in\R,\,\,\,\text{and} \; \; x\in\DO,
 $$
 is bounded from $L^p(\R\times \DO)$ into itself, for every $1<p<\infty$, and from $L^1(\R\times \DO)$ into $L^{1,\infty}(\R\times \DO)$.

 Since the principal value $\displaystyle\lim_{\epsilon\to 0^+} \int_{\Omega_{\epsilon}(x)}\widetilde{R_\mu}(\tau,x,y)f(t-\tau,y)d\tau dy$ exists, for every $f\in C_c^\infty(\R\times\DO)$ and $(t,x)\in\R\times\DO$, and as $C_c^\infty(\R\times\DO)$ is a dense subspace of  $L^p(\R\times \DO)$, $1\le p<\infty$, we have that, for every $f\in  L^p(\R\times \DO)$, $1\le p<\infty$, there exists the principal value $\displaystyle\lim_{\epsilon\to 0^+} \int_{\Omega_{\epsilon}(x)}\widetilde{R_\mu}(\tau,x,y)f(t-\tau,y)d\tau dy$ for almost all $(t,x)\in\R\times\DO$. Also, the operator $\widehat{R_\mu}$ defined on  $L^p(\R\times \DO)$, $1\le p<\infty$, as follows
 $$
 \widehat{R_\mu}(f)(t,x)=\displaystyle\lim_{\epsilon\to 0^+} \int_{\Omega_{\epsilon}(x)}\widetilde{R_\mu}(\tau,x,y)f(t-\tau,y)d\tau dy,\;\; \text{a.e.}\; (t,x)\in\R\times\DO,
 $$
 is bounded from  $L^p(\R\times \DO)$ into itself, for every $1<p<\infty$, and from  $L^1(\R\times \DO)$ into $L^{1,\infty}(\R\times \DO)$.

Our objective is to study the $L^p$-boundedness properties for $\widetilde{R_\mu}$ when $-1<\mu\le -1/2$. We have that
    \begin{align*}
    \widetilde{R_\mu}(f)&=\mathcal{F}^{-1}h_\mu\left( \frac{-i\rho}{z^2+i\rho}\mathcal{F}h_\mu(f)\right)=h_\mu h_{\mu+2}\mathcal{F}^{-1}h_{\mu+2}\left( \frac{-i\rho}{z^2+i\rho}\mathcal{F}h_{\mu+2}h_{\mu+2}h_\mu(f)\right)\\
    &=S_\mu \widetilde{R_{\mu+2}}S_\mu^*f, \;\; f\in L^2(\R\times\DO),
    \end{align*}
    where $S_\mu=h_\mu h_{\mu+2}$ and $S_\mu^*=h_{\mu+2}h_\mu$.

    The composition operators $h_\mu h_\nu$ are named transplantation operators associated with Hankel transforms.

 According to \cite[Theorem 2.1]{NS}, if $\mu>-1$, $1<p<\infty$ and $v$ is  a nonnegative measurable function such that
 $$
 \sup_{r>0}\left(\int_0^rv(x)^px^{p(\mu+1/2)} dx\right)^{1/p}\left( \int_r^\infty v(x)^{-p'}x^{-p'(\mu+3/2)}dx\right)^{1/p'}<\infty
 $$
 then, the operator $S_\mu$ can be extended to $L^p(v)$ as a bounded operator from $L^p(v)$ into
 itself. Here, as usual, $p'$ denotes the conjugated of $p$, that is,
 $p'=\frac{p}{p-1}$.

 Since
 \begin{align*}
 \left(\int_0^rx^{p(\mu+1/2)} dx\right)^{1/p}\left( \int_r^\infty x^{-p'(\mu+3/2)}dx\right)^{1/p'}&=\frac{r^{\frac{1}{p}(p(\mu+1/2)+1)+\frac{1}{p'}(1-p'(\mu+3/2))}}{(p(\mu+1/2)+1)(-1+p'(\mu+3/2))}\\
 &=\frac{1}{(p(\mu+1/2)+1)(-1+p'(\mu+3/2))}, \;\; r>0,
 \end{align*}
 provided that $p( \mu+1/2)+1>0$ and $1-p'(\mu+3/2)<0$, $S_\mu$ defines a bounded operator from $L^p(\DO)$ into itself when $1<p<\infty$ and $\mu>-1/2-1/p$ and $\mu>-1$. Then $S_\mu^*$ is bounded from $L^p(\DO)$ into itself when $1<p<\infty$, $\mu>-1$ and $\mu>-1/2-(1-1/p)=-3/2+1/p$. Hence, since $\widetilde{R_{\mu+2}}$ is bounded from  $L^p(\DO)$ into itself when $1<p<\infty$ and $\mu>-1$, $\widetilde{R_\mu}$ defines a bounded operator from $L^p(\R\times\DO)$ into itself provided that $-1<\mu\le 1/2$ and $-1/2-\mu<1/p<3/2+\mu$.

 Our results about $\widetilde{R_\mu}$ can be summarized as follows:
 \begin{itemize}
    \item $\widetilde{R_\mu}$ is bounded from $L^p(\R\times\DO)$ into itself when
    \begin{enumerate}
        \item $\mu>-1/2$ and $1<p<\infty$.
        \item $-1<\mu\le -1/2$ and $-1/2-\mu<1/p<3/2+\mu$.
            \end{enumerate}
    \item $\widetilde{R_\mu}$ is bounded from $L^1(\R\times\DO)$ into $L^{1,\infty}(\R\times\DO)$, when $\mu>-1/2$.
     \end{itemize}

\begin{center}

\begin{tikzpicture}

    \draw[gray,->](-2.2,0)--(3.5,0);
        \node [right] at (3.5,0) {$\mu$};

    \draw[gray,->](0,-0.5)--(0,3);
    \node [left] at (0,3) {$\frac{1}{p}$};

    \draw[dashed,black](-2.4,0.6)--(-0.8,2.2);

    \draw[dashed,black](-2,1)--(-0.1,-0.9);
    \draw[dashed,black](-1,2)--(3,2);
    \node[above] at (1.7,2) {weak};
    \node at (1.7,1) {strong};
\node[below right] at (-0.2,-0.8) {$\mu=-\frac{1}{2}-\frac{1}{p}$};
\node[below right] at (-4.65,1) {$\mu=-\frac{3}{2}+\frac{1}{p}$};
\draw [thick] (-.1,1) node[right]{$\frac{1}{2}$} -- (0.3,1); \draw
[thick] (-.1,2) node[right]{${1}$} -- (0.3,2); \draw [thick]
(-1,-0.1) node[below]{$\frac{-1}{2}$} -- (-1,0.1); \draw [thick]
(-2,-0.1) node[below]{${-1}$} -- (-2,0.1);
    \end{tikzpicture}

\end{center}

\subsection{Riesz transformation $R_\mu$}

We consider the operator $R_\mu$ defined by
$$
R_\mu(f)(t,x)=\lim_{\varepsilon\to
0}\int_{\Omega_\varepsilon(x)}\mathbb{K}_\mu(x,y,s)f(t-s,y)dsdy,
$$
for every $f\in C_c^\infty((\R\times (0,\infty))$.  We recall that
$$
\mathbb{K}_\mu
(x,y,s)=\delta_{\mu+1}\delta_{\mu}W_s^\mu(x,y),\,\,\,s,x,y\in
(0,\infty).
$$
We also write
$$
\mathbb{K}(x,y,s)=\partial^2_{xx}W_s(x-y),\,\,\,x,y\in
\R\,\,\,\text{and}\,\,\,s\in (0,\infty).
$$

According to \eqref{eq2.1} we have that
\begin{align*}
\mathbb{K}_\mu(x,y,s)&=\frac{x^{\mu+5/2}y^{\mu+1/2}}{(2s)^{\mu+3}}\bigg(\frac{y^3}{2s x}\bigg(\frac{xy}{2s}\bigg)^{-\mu-1}I_{\mu+2}\bigg(\frac{xy}{2s}\bigg)-\frac{2y}{x}\bigg(\frac{xy}{2s}\bigg)^{-\mu}I_{\mu+1}\bigg(\frac{xy}{2s}\bigg)\\
&+\bigg(\frac{xy}{2s}\bigg)^{-\mu}I_{\mu}\bigg(\frac{xy}{2s}\bigg)
\bigg)e^{-\frac{x^2+y^2}{4s}}, \;\; x,y,s\in\DO.
\end{align*}
 Also, we get
  \begin{align*}
 \mathbb{K}(x,y,s)=\frac{\partial^2}{\partial x^2}\left(\frac{ e^{-\frac{(x-y)^2}{4s}}}{\sqrt{4\pi s}}\right)=\frac{ e^{-\frac{(x-y)^2}{4s}}}{\sqrt{4\pi s}}\left( \frac{(x-y)^2}{(2s)^2}-\frac{1}{2s}\right), \;\; x,y\in\R, \; s\in\DO.
 \end{align*}
 We define $T_\mu(x,y,s)=\mathbb{K}_\mu(x,y)-\mathbb{K}(x,y,s)+\mathbb{K}(x,-y,s), \;\; s,x,y\in\DO.$

 Our first objective is to study when
 \begin{equation}\label{eq4.1}
 \displaystyle\sup_{x\in\DO}\INT\INT |T_\mu(x,y,s)|dyds<\infty,
 \end{equation}
 is true.

 We decompose the proof of \eqref{eq4.1} in several steps.   According to (\ref{P1}), we  can write
 \begin{align}\label{eq4.2}
 |\mathbb{K}&_\mu(x,y,s)|\le C\frac{x^{\mu+5/2}y^{\mu+1/2}}{s^{\mu+2}}\left(\frac{y^4}{s^3}+\frac{y^2}{s^2}+\frac{1}{s} \right)e^{-\frac{x^2+y^2}{4s}}\nonumber\\
 &\le  C\left(\frac{xy}{s}\right)^{\mu+1/2}\frac{x^2}{s^{5/2}}\left(\frac{y^4}{s^{2}}+\frac{y^2}{s}+1 \right)e^{-\frac{x^2+y^2}{4s}}, \;\; s,x,y\in\DO\,\,\,\text{and}\,\,\,xy\le s.
 \end{align}
  By (\ref{P2})  we get
  \begin{align}\label{eq4.3}
  |\mathbb{K}_\mu(x,y,s)|&\le C\frac{x^{\mu+5/2}y^{\mu+1/2}}{s^{\mu+2}}\left(\frac{y^2}{sx^2}+\frac{y}{sx}+\frac{1}{s} \right)\left(\frac{xy}{s}\right)^{-\mu-1/2}e^{-\frac{(x-y)^2}{4s}}\nonumber\\
  &\le C\frac{x^2}{s^{5/2}}e^{-\frac{(x-y)^2}{4s}}\left(\frac{y^2}{x^2}+\frac{y}{x}+1 \right), \;\; s,x,y\in\DO, \; xy\ge s.
  \end{align}

 From \eqref{eq4.3} it follows that, for every $s,x,y\in\DO$ such that $xy\ge s$,
 \begin{equation}\label{eq4.4}
  |\mathbb{K}_\mu(x,y,s)|\le C\frac{x^2}{s^{5/2}}\left(\frac{y^2}{x^2}+\frac{y}{x}+1 \right)e^{-c\frac{\max\{x,y\}^2}{s}}, \;\; 0<y<x/2\,\,\,\text{or}\,\,\, \;y>2x.
 \end{equation}
 By using \eqref{eq4.2} and \eqref{eq4.4} we obtain
 \begin{small}

 \begin{align}\label{eq4.5}
 \int_0^{x/2}&\INT|\mathbb{K}_\mu(x,y,s)|dsdy\le C\bigg(\int_0^{x/2}\int_0^{xy}\frac{x^2}{s^{5/2}}\left( \frac{y^2}{x^2}+\frac{y}{x}+1 \right)e^{-c\frac{x^2}{s}}dsdy\nonumber\\
 &+\int_0^{x/2}\int_{xy}^\infty\frac{(xy)^{\mu+1/2}}{s^{\mu+3}}x^2\left( \frac{y^4}{s^2}+\frac{y^2}{s}+1 \right)e^{-\frac{x^2+y^2}{4s}}dsdy\bigg)\nonumber\\
 &\le C x^2\bigg( \int_0^{x/2}\int_0^{xy}\frac{e^{-c\frac{x^2}{s}}}{s^{5/2}}dsdy\nonumber\\
 &+\int_0^{x/2}(xy)^{\mu+1/2}\int_{xy}^\infty\left(\frac{y^4}{(x^2+y^2)^2}+\frac{y^2}{x^2+y^2}+1 \right)\frac{1}{s^{\mu+3}}\left( \frac{s}{x^2+y^2}\right)^{\frac{2\mu+5}{4}}dsdy\bigg) \nonumber\\
 &\le Cx^2\left( \int_0^{x/2}\int_{x/y}^\infty\frac{e^{-cu}\sqrt{u}}{x^3}dudy+x^{\mu+1/2}\int_0^{x/2}\frac{y^{\mu+1/2}}{(x^2+y^2)^{\frac{2\mu+5}{4}}(xy)^{\mu/2+3/4}}dy\right) \nonumber\\
 &\le C\left(\frac{1}{x} \int_0^{x/2}dy\INT e^{-cu}\sqrt{u} du+x^{2+\mu+1/2-\frac{2\mu+5}{2}-\frac{\mu}{2}-3/4}\int_0^{x/2}y^{\mu+1/2-\mu/2-3/4}dy \right)\nonumber \\
 &\le C\left( 1+x^{-\mu/2-3/4}\int_0^{x/2}y^{\mu/2-1/4}dy\right) \le C, \;\; x\in\DO,
 \end{align}

  \end{small}
 and

 \begin{align}\label{eq4.6}
 \int_{3x/2}^\infty&\INT |\mathbb{K}_\mu(x,y,s)|dsdy\le C\bigg(\int_{3x/2}^\infty\int_0^{xy}\frac{x^2}{s^{5/2}}\left( \frac{y^2}{x^2}+\frac{y}{x}+1\right)e^{-c\frac{y^2}{s}}dsdy\nonumber \\&+\int_{3x/2}^\infty\int_{xy}^\infty\frac{(xy)^{\mu+1/2}}{s^{\mu+3}}x^2\left(\frac{y^4}{s^2}+\frac{y^2}{s}+1 \right) e^{-\frac{x^2+y^2}{4s}}dsdy\bigg)\nonumber \\
 &\le C\left(\int_{3x/2}^\infty y^2\int_0^{xy}\frac{e^{-c\frac{y^2}{s}}}{s^{5/2}}dsdy+\int_{3x/2}^\infty(xy)^{\mu+1/2}x^2 \int_{xy}^\infty\left(\frac{y^4}{s^2}+\frac{y^2}{s}+1 \right)\frac{e^{-\frac{y^2}{4s}}}{s^{\mu+3}}dsdy\right)\nonumber \\
 &\le C\left( \int_{3x/2}^\infty y^2\int_{y/x}^\infty\frac{e^{-cu}\sqrt{u}}{y^3}dudy+\int_{3x/2}^\infty(xy)^{\mu+1/2}\frac{x^2}{(xy)^{\mu+2}}dy \right)\nonumber \\
 &\le C\left( x\int_{3x/2}^\infty\frac{y}{2x}\frac{e^{-c\frac{y}{2x}}}{y^2}dy+1\right)\le C, \;\; x\in\DO.
 \end{align}

We have that
\begin{align*}
\mathbb{K}&(x,y,s)-\mathbb{K}(x,-y,s)=\frac{e^{-\frac{(x-y)^2}{4s}}}{\sqrt{4\pi}(2s)^{3/2}}\left(1-e^{-\frac{(x+y)^2-(x-y)^2}{4s}} \right)\left(\frac{(x-y)^2}{2s}-1 \right)-\frac{4xye^{-\frac{(x+y)^2}{4s}}}{\sqrt{4\pi}(2s)^{5/2}}\\
&=\frac{1}{\sqrt{4\pi}(2s)^{3/2}}e^{-\frac{(x-y)^2}{4s}}\left(1-e^{-\frac{xy}{s}}\right)\left(\frac{(x-y)^2}{2s}-1
\right)-\frac{4xye^{-\frac{(x+y)^2}{4s}}}{\sqrt{4\pi}(2s)^{5/2}},
\;\; s,x,y\in\DO.
\end{align*}

 Then,
 \begin{equation}\label{eq4.7}
 \big|\mathbb{K}(x,y,s)-\mathbb{K}(x,-y,s)\big|\le C  \frac{xye^{-c\frac{(x-y)^2}{s}}}{s^{5/2}}, \;\; s,x,y\in\DO, \; xy\le s,
 \end{equation}
 and
  \begin{equation}\label{eq4.8}
  \big|\mathbb{K}(x,y,s)-\mathbb{K}(x,-y,s)\big|\le C  \frac{e^{-c\frac{(x-y)^2}{s}}}{s^{3/2}}, \;\; s,x,y\in\DO.
  \end{equation}
 From \eqref{eq4.7} and \eqref{eq4.8} we deduce that

 \begin{align}\label{eq4.9}
 \int_0^{x/2}\INT \big|\mathbb{K}(x,y,s)-\mathbb{K}(x,-y,s)\big|dsdy&\le C \int_0^{x/2}\INT\frac{e^{-c\frac{(x-y)^2}{s}}}{s^{3/2}}dsdy\le C, \;\; x\in\DO;
 \end{align}
 and

 \begin{align}\label{eq4.10}
  \int_{3x/2}^\infty\INT \big|\mathbb{K}(x,y,s)-\mathbb{K}(x,-y,s)\big|dsdy&\le C\bigg( \int_{3x/2}^\infty\int_0^{xy}\frac{e^{-c\frac{(x-y)^2}{s}}}{s^{3/2}}dsdy
  +\int_{3x/2}^\infty\int_{xy}^\infty\frac{xye^{-c\frac{(x-y)^2}{s}}}{s^{5/2}}dsdy \bigg)\nonumber\\
    &\le C\left( \int_{3x/2}^\infty\frac{1}{y}\int_{y/x}^\infty \frac{e^{-cu}}{\sqrt{u}}du+ \int_{3x/2}^\infty xy\int_0^{y/x}\frac{e^{-cu}\sqrt{u}}{y^3}dudy\right)\nonumber\\
  &\le C\left( \int_{3x/2}^\infty \frac{e^{-c\frac{y}{x}}}{y}\INT \frac{e^{-cu}}{\sqrt{u}}du+x \int_{3x/2}^\infty \frac{1}{y^2}\INT e^{-cu}\sqrt{u}dudy\right)\nonumber\\
  &\le C x \int_{3x/2}^\infty\frac{dy}{y^2}\le C, \;\; x\in\DO.
 \end{align}
 On the other hand, we have that
$$\big|\mathbb{K}(x,-y,s)\big|\le C \frac{e^{-c\frac{(x+y)^2}{s}}}{s^{3/2}}, \;\; s,x,y\in\DO.
$$
We can write
\begin{align}\label{eq4.11}
\int_{x/2}^{3x/2}\INT|\mathbb{K}(x,-y,s)| ds dy&\le C
\int_{x/2}^{3x/2}\INT\frac{e^{-c\frac{(x+y)^2}{s}}}{s^{3/2}}dsdy\nonumber\\
&\le C
\int_{x/2}^{3x/2}\frac{1}{x+y}\INT\frac{e^{-u}}{\sqrt{u}}dudy\le C
\;\; x\in\DO.
\end{align}
 Finally we are going to estimate $\displaystyle\int_{x/2}^{3x/2}\INT\big|\mathbb{K}_\mu(x,y,s)-\mathbb{K}(x,y,s)\big|dsdy$.

 According to \eqref{eq4.2} we obtain
 \begin{align}\label{eq4.12}
 &\int_{x/2}^{3x/2}\int_{xy}^\infty\big|\mathbb{K}_\mu(x,y,s)-\mathbb{K}(x,y,s)\big|dsdy\le\int_{x/2}^{3x/2}\int_{xy}^\infty\bigg|\mathbb{K}_\mu(x,y,s)\bigg|dsdy+ \int_{x/2}^{3x/2}\int_{xy}^\infty\bigg|\mathbb{K}(x,y,s)\bigg|dsdy \nonumber\\
 &\le C \bigg( \int_{x/2}^{3x/2}\int_{xy}^\infty x^2(xy)^{\mu+1/2}\left( \frac{y^4}{(x^2+y^2)^2}+\frac{y^2}{x^2+y^2}+1\right) \frac{e^{-c\frac{x^2+y^2}{s}}}{s^{\mu+3}}dsdy+ \int_{x/2}^{3x/2}\int_{xy}^\infty\frac{e^{-c\frac{x^2+y^2}{s}}}{s^{3/2}}dsdy \bigg)\nonumber\\
  &\le C \left( x^2\int_{x/2}^{3x/2}\frac{(xy)^{\mu+1/2}}{(xy)^{\mu+2}}dy+\int_{x/2}^{3x/2}\frac{dy}{\sqrt{xy}}\right) \le C, \;\; x\in\DO.
 \end{align}
 By using \eqref{eq2.2} we get
 $$
 \;\bigg|\mathbb{K}_\mu(x,y,s)-\mathbb{K}(x,y,s)\bigg|\le C\frac{e^{-\frac{(x-y)^2}{8s}}}{s^{3/2}}, \;\; 0<s\le xy\,\,\,\text{and}\,\,\,x/2<y<3x/2,
 $$
 but at this moment it is not sufficient. We need to improve the last estimates.

Since  $I_\nu(z)=\frac{e^z}{\sqrt{2\pi z}}\left(1+
\frac{1-4\nu^2}{8z}+O\left(\frac{1}{z^2}\right)\right)$, for $z>0$
and $\nu>-1$ (see (\ref{P2})), we can write

 \begin{align*}
 &\mathbb{K}_\mu(x,y,s)=\frac{x^2}{\sqrt{2\pi}(2s)^{3/2}}\bigg( \frac{y^3}{(2s)^2x}\frac{2s}{xy}\left( 1+\frac{1-4(\mu+2)^2}{4}\frac{s}{xy}+ O\left( \left(\frac{s}{xy}\right)^2\right)\right)\\
 &-\frac{y}{sx}\left( 1+\frac{1-4(\mu+1)^2}{4}\frac{s}{xy}+ O\left( \left(\frac{s}{xy}\right)^2\right)\right)+\frac{1}{2s}\left( 1+\frac{1-4\mu^2}{4}\frac{s}{xy}+ O\left( \left(\frac{s}{xy}\right)^2\right)\right)\bigg)e^{-\frac{(x-y)^2}{4s}}\\
 &=\frac{x^2}{\sqrt{2\pi}(2s)^{3/2}}\bigg( \frac{y^2}{2sx^2}-\frac{y}{sx}+\frac{1}{2s}+\frac{1-4\mu^2}{4}\left(\frac{y}{2x^3}-\frac{1}{x^2}+\frac{1}{2xy} \right)-2\mu\left( \frac{y}{x^3}-\frac{1}{x^2}\right)-\frac{2y}{x^3}+\frac{1}{x^2}\\
 &+O\left(\frac{s}{x^4} \right)+O\left(\frac{s}{x^3y} \right)+O\left(\frac{s}{x^2y^2} \right)\bigg)e^{-\frac{(x-y)^2}{4s}}\\
 &=\frac{x^2}{\sqrt{2\pi}(2s)^{3/2}}e^{-\frac{(x-y)^2}{4s}}\bigg(  \frac{(y-x)^2}{2sx^2}+\frac{1-4\mu^2}{4}\frac{(y-x)^2}{2x^3y}-2\mu\frac{y-x}{x^3}-\frac{2y-x}{x^3}+O\left( \frac{s}{x^4}\right)\bigg),\\
 & \hspace{10cm}\;\; 0<x/2<y<3x/2 \;\;\text{and}\;\; 0<s\le xy.
 \end{align*}

 We get
 \begin{align*}
 \mathbb{K}_\mu(x,y,s)-\mathbb{K}(x,y,s)&=\frac{e^{-\frac{(x-y)^2}{4s}}}{\sqrt{2\pi}}\bigg[ \frac{(y-x)^2}{(2s)^{5/2}}+\frac{1-4\mu^2}{8}\frac{(y-x)^2}{(2s)^{3/2}xy}-2\mu\frac{y-x}{(2s)^{3/2}x}\\
 &-\frac{2y-x}{(2s)^{3/2}x}-\frac{(x-y)^2}{(2s)^{5/2}}+\frac{1}{(2s)^{3/2}}+ O\left(\frac{1}{x^2\sqrt{s}} \right)\bigg]\\
 &=\frac{e^{-\frac{(x-y)^2}{4s}}}{\sqrt{2\pi}}\bigg[\frac{1-4\mu^2}{8}\frac{(y-x)^2}{(2s)^{3/2}xy}-2(\mu+1)\frac{y-x}{(2s)^{3/2}x}\\
 &+ O\left(\frac{1}{x^2\sqrt{s}} \right) \bigg], \;\; x/2<y<3x/2, \;\;  \text{and}\;\; 0<s\le xy.
 \end{align*}

 By using the last equality we deduce that
 \begin{align} \label{eq4.13}
 &\int_{x/2}^{3x/2}\int_0^{xy}\bigg|\mathbb{K}_\mu(x,y,s)-\mathbb{K}(x,y,s)\bigg|dsdy\nonumber\\
 &\le C \bigg(\int_{x/2}^{3x/2}\int_0^{xy}\frac{e^{-c\frac{(x-y)^2}{s}}}{xy\sqrt{s}}dsdy+ \int_{x/2}^{3x/2}\int_0^{xy}\frac{e^{-c\frac{(x-y)^2}{s}}\sqrt{|x-y|}}{s^{5/4}x}dsdy+\int_{x/2}^{3x/2}\int_0^{xy}\frac{e^{-c\frac{(x-y)^2}{s}}}{x^2\sqrt{s}}dsdy\bigg)\nonumber\\
 &\le \left( \int_{x/2}^{3x/2}\frac{\sqrt{xy}}{xy}dy+\int_{x/2}^{3x/2}\frac{\sqrt{|x-y|}}{x}\INT\frac{e^{-u}}{u^{3/4}\sqrt{|x-y|}}dudy+\int_{x/2}^{3x/2}\frac{\sqrt{xy}}{x^2}dy\right)\nonumber\\
 &\le C\int_{x/2}^{3x/2}\frac{dy}{x}\le C, \;\; x\in\DO.
 \end{align}
 By combining \eqref{eq4.5}, \eqref{eq4.6}, \eqref{eq4.9}, \eqref{eq4.10}, \eqref{eq4.11},  \eqref{eq4.12} and  \eqref{eq4.13} we obtain
 $$
 \sup_{x\in\DO}\INT\INT |T_\mu(x,y,s)|dsdy<\infty.
 $$
 This property implies that the operator
 $$
 T_\mu (f)(t,x)=\INT\INT T_\mu(x,y,s)f(t-s,y)dsdy
 $$
 is bounded from $L^\infty(\R\times\DO)$ into itself, for every $\mu>-1$.

 The operator $R_\mu$ is bounded from $L^2(\R\times\DO)$ into itself and the operator $R$ is bounded from $L^2(\R^2)$ into itself, see \cite[Theorem 2.3, (B)]{PST}.
 From these facts we deduce that $T_\mu$ is bounded from $L^2(\R\times\DO)$ into itself. Hence, interpolation theorem implies that $T_\mu$ is bounded from $L^p(\R\times\DO)$
 into itself, for every $2\le p\le \infty$. By using again \cite[Theorem 2.3, (B)]{PST} we conclude that $R_\mu$ defines a bounded operator from $L^p(\R\times\DO)$ into itself for every $2\le p<\infty$ and  $\mu>-1$.

It is remarkable that the operator $R_\mu$ is not selfadjoint in
$L^2(\R\times\DO)$.

 To simplify we now consider the function
\begin{align*}
M_\mu(x,y,s)&=\frac{y^{\mu+5/2}x^{\mu+1/2}}{(2s)^{\mu+2}}\bigg(\frac{x^3}{4s^2 y}\bigg(\frac{xy}{2s}\bigg)^{-\mu-1}I_{\mu+2}\bigg(\frac{xy}{2s}\bigg)-\frac{x}{sy}\bigg(\frac{xy}{2s}\bigg)^{-\mu}I_{\mu+1}\bigg(\frac{xy}{2s}\bigg)\\
&+\frac{1}{2s}\bigg(\frac{xy}{2s}\bigg)^{-\mu}I_{\mu}\bigg(\frac{xy}{2s}\bigg)
\bigg)e^{-\frac{x^2+y^2}{4s}}-\frac{\partial^2}{\partial x^2}\left(
\frac{e^{-\frac{(x-y)^2}{4s}}}{\sqrt{4\pi s}}\right), \;\;
x,y,s\in\DO.
\end{align*}
 We are going to see that $\displaystyle\sup_{x\in\DO}\INT\INT |M_\mu(x,y,s)|dyds<\infty.$

 We define
 \begin{align*}
 K_\mu(x,y,s)&=\frac{y^{\mu+5/2}x^{\mu+1/2}}{(2s)^{\mu+2}}\bigg(\frac{x^3}{4s^2 y}\bigg(\frac{xy}{2s}\bigg)^{-\mu-1}I_{\mu+2}\bigg(\frac{xy}{2s}\bigg)-\frac{x}{sy}\bigg(\frac{xy}{2s}\bigg)^{-\mu}I_{\mu+1}\bigg(\frac{xy}{2s}\bigg)\\
 &+\frac{1}{2s}\bigg(\frac{xy}{2s}\bigg)^{-\mu}I_{\mu}\bigg(\frac{xy}{2s}\bigg) \bigg)e^{-\frac{x^2+y^2}{4s}}, \;\; x,y,s\in\DO.
 \end{align*}

 From \eqref{eq4.2}, \eqref{eq4.3} and \eqref{eq4.4} we deduce that
 \begin{equation}\label{eq4.14}
| K_\mu(x,y,s)|\le C
\bigg(\frac{xy}{s}\bigg)^{\mu+1/2}\frac{y^2}{s^{5/2}}\left(\frac{x^4}{s^2}+\frac{x^2}{s}+1
\right)e^{-\frac{x^2+y^2}{4s}}, \;\; s,x,y\in\DO,\;\; \text{and} \;
xy\le s;
 \end{equation}

  \begin{equation}\label{eq4.15}
  | K_\mu(x,y,s)|\le C \frac{y^2}{s^{5/2}}e^{-\frac{(x-y)^2}{4s}}\left(\frac{x^2}{y^2}+\frac{x}{y}+1 \right), \;\; s,x,y\in\DO,\;\; \text{and} \; xy\ge s;
  \end{equation}
 and, for every $x,y,s\in (0,\infty)$ and $xy\ge s$,

   \begin{equation}\label{eq4.16}
   | K_\mu(x,y,s)|\le C \frac{y^2}{s^{5/2}}e^{-c\frac{\max\{x,y\}^2}{s}}\left(\frac{x^2}{y^2}+\frac{x}{y}+1 \right), \;\;  0<y<x/2,\;\; \text{or} \;\; y>3x/2.
   \end{equation}

 By \eqref{eq4.14} and \eqref{eq4.16} it follows that
 \begin{align}\label{eq4.17}
 &\int_0^{x/2}\INT | K_\mu(x,y,s)|dsdy\le C \bigg( \int_0^{x/2} \int_0^{xy}\frac{y^2}{s^{5/2}}\left(\frac{x^2}{y^2}+\frac{x}{y}+1 \right)e^{-c\frac{x^2}{s}}dsdy\nonumber \\
 &+  \int_0^{x/2}\int_{xy}^\infty \frac{(xy)^{\mu+1/2}}{s^{\mu+3}}y^2\left(\frac{x^4}{s^2}+\frac{x^2}{s}+1 \right)e^{-\frac{x^2+y^2}{4s}}dsdy\bigg)\nonumber \\
 &\,\nonumber\\
 &\le C \bigg(  \int_0^{x/2}(x^2+xy+y^2) \int_0^{xy}\frac{e^{-c\frac{x^2}{s}}}{s^{5/2}}dsdy \nonumber\\
 &+\int_0^{x/2} \left( \frac{x^4}{(x^2+y^2)^2}+ \frac{x^2}{x^2+y^2}+1\right)\frac{(xy)^{\mu+1/2} y^2}{(x^2+y^2)^{\mu+2}}dy\INT e^{-u}u^{\mu+1}du\bigg)\nonumber \\
 &\,\nonumber\\
 &\le C \bigg(\int_0^{x/2}\frac{x^2+xy+y^2}{x^3}\INT e^{-u}\sqrt{u}du+x^{\mu+1/2}\int_0^{x/2}\frac{y^{\mu+5/2}}{(x+y)^{2\mu+4}}dy\INT e^{-u}u^{\mu+1}du \bigg)\nonumber \\
 &\hspace{4mm}\le C \left( \frac{1}{x}\int_0^{x/2} dy+\frac{1}{x^{\mu+7/2}}\int_0^{x/2}y^{\mu+5/2}dy\right) \le C, \;\; x\in\DO;
 \end{align}
 and
 \begin{align}\label{eq4.18}
  &\int_{3x/2}^\infty\INT | K_\mu(x,y,s)|dsdy\le C \bigg( \int_{3x/2}^\infty \int_0^{xy}\frac{y^2}{s^{5/2}}\left(\frac{x^2}{y^2}+\frac{x}{y}+1 \right)e^{-c\frac{y^2}{s}}dsdy\nonumber \\
  &+  \int_{3x/2}^\infty\int_{xy}^\infty \frac{(xy)^{\mu+1/2}}{s^{\mu+3}}y^2\left(\frac{x^4}{s^2}+\frac{x^2}{s}+1 \right)e^{-\frac{x^2+y^2}{4s}}dsdy\bigg)\nonumber \\
  &\le C \bigg(  \int_{3x/2}^\infty\frac{x^2+xy+y^2}{y^3} \int_{y/x}^\infty e^{-cu}\sqrt{u} dudy+ \int_{3x/2}^\infty \int_{xy}^\infty\frac{x^{\mu+1/2} y^{\mu+5/2}}{s^{\mu+3}} e^{-c\frac{x^2+y^2}{s}}dsdy\bigg) \nonumber \\
      &\le C \left(x\int_{3x/2}^\infty\frac{e^{-c\frac{y}{x}}}{y^2}dy +x^{\mu+1/2}\int_{3x/2}^\infty\frac{dy}{y^{\mu+3/2}} \right) \le C, \;\; x\in\DO,
 \end{align}
 provided that $\mu>-1/2$.

 Also, for $ x\in\DO$, we have
 \begin{align}\label{eq4.19}
 \int_{x/2}^{3x/2}\int_{xy}^\infty | K_\mu(x,y,s)|dsdy&\le  C  \int_{x/2}^{3x/2}\int_{xy}^\infty y^2(xy)^{\mu+1/2}\frac{e^{-c\frac{x^2+y^2}{s}}}{s^{\mu+3}}dsdy\nonumber\\
 &\le C  \int_{x/2}^{3x/2}y^2\frac{(xy)^{\mu+1/2}}{(xy)^{\mu+2}}dy\le C,
 \end{align}
and, as in \eqref{eq4.13},
  \begin{align}\label{eq4.20}
   &\int_{x/2}^{3x/2}\int_0^{xy}
   \bigg|K_\mu(x,y,s)-\mathbb{K}(x,y,s)
   \bigg|dsdy \le C \bigg(\int_{x/2}^{3x/2}\int_0^{xy} \frac{e^{-\frac{(x-y)^2}{8s}}}{\sqrt{s}xy}dsdy\nonumber \\ &+\int_{x/2}^{3x/2}\int_0^{xy} \frac{e^{-\frac{(x-y)^2}{8s}}\sqrt{|x-y|}}{s^{5/4} y}dsdy+ \int_{x/2}^{3x/2}\int_0^{xy} \frac{e^{-\frac{(x-y)^2}{8s}}}{y^2\sqrt{s} }dsdy\bigg) \le C.
  \end{align}

 From  \eqref{eq4.17}, \eqref{eq4.18},  \eqref{eq4.19} and  \eqref{eq4.20} and by taking into account the other above estimates, we deduce that
$$
 \sup_{x\in\DO}\INT\INT |M_\mu(x,y,s)|dsdy<\infty.
 $$
Then, the operator $T_\mu$ is bounded from $L^1(\R\times
(0,\infty))$ into itself, provided that $\mu>-1/2$.

By invoking interpolation theorem we infer that $T_\mu$ is bounded
from $L^p(\R\times (0,\infty)$ into itself, for every $1\le p\le
\infty$ when $\mu>-1/2$. By using again \cite[Theorem 2.3, (B)]{PST}
we conclude that $R_\mu$ defines,  for every $1< p<\infty$, a
bounded operator from $L^p(\R\times\DO)$ into itself and from
$L^1(\R\times (0,\infty))$ into  $L^{1,\infty}(\R\times (0,\infty))$
provided that $\mu>-1/2$.

By following the same argument as in the previous section, the use
of the maximal operator associated to the singular integral $R_\mu$
and \cite[Theorem 2.3, (B)]{PST} allow us to conclude that, for
every $f\in L^p(\R\times (0,\infty))$, $1\le p<\infty$, the limit
$$
\lim_{\varepsilon\to
0^+}\int_{\Omega_\varepsilon(x)}\mathbb{K}(x,y,s)f(t-s,y)dsdy,
$$
exists, for a.e. $(t,x)\in \R\times (0,\infty)$, when $\mu>-1/2$.
Moreover, the operator $\mathbb{R}_\mu$ defined by
$$
\mathbb{R}_\mu(f)(t,x)=\lim_{\varepsilon\to
0^+}\int_{\Omega_\varepsilon(x)}\mathbb{K}(x,y,s)f(t-s,y)dsdy,
$$
is bounded, for every $1< p<\infty$, from $L^p(\R\times\DO)$ into
itself and from $L^1(\R\times (0,\infty)$ into
$L^{1,\infty}(\R\times (0,\infty)$ provided that $\mu>-1/2$.

 Our next objective is to complete the study of the boundedness of the operator $R_\mu$ when $-1<\mu\le -1/2$.

  We have that, for every $f\in L^2(\R\times\DO)$, $R_\mu f=\mathcal{F}^{-1}h_{\mu+2}\left(\frac{z^2}{z^2+i\rho}\mathcal{F}h_\mu(f) \right)$.

 Then, the adjoint $R_\mu^*$ of $R_\mu$ is given by $$R_\mu^*f=\mathcal{F}^{-1}h_{\mu}\left(\frac{z^2}{z^2-i\rho}h_{\mu+2}\mathcal{F}f \right)=\left[ \mathcal{F}^{-1}h_{\mu}\left(\frac{z^2}{z^2+i\rho}h_{\mu+2}\mathcal{F}\tilde{f} \right)\right]^{\sim}, \;\; f\in L^2(\R\times\DO),$$
 where $\tilde{f}(t,x)=f(-t,x)$, $t\in\R$ and $x\in\DO$.

 We consider the operator
 $\mathcal{H}_\mu f=\mathcal{F}^{-1}h_{\mu}\left(\frac{z^2}{z^2+i\rho}h_{\mu+2}\mathcal{F}{f} \right)$, $f\in L^2(\R\times\DO)$.
 Since $h_\alpha^2=I$ in $L^2(\DO)$, for every $\alpha>-1$ we can write
 $$
 \mathcal{H}_\mu=h_\mu h_{\mu+2} \mathcal{F}^{-1}h_{\mu+2}\left(\frac{z^2}{z^2+i\rho}h_{\mu}\mathcal{F}h_\mu h_{\mu+2}  \right)=S_\mu R_\mu S_\mu,
 $$
 where $S_\mu=h_\mu h_{\mu+2}$.

According to \cite[Theorem 2.1]{NS}, $S_\mu$ defines a bounded operator from $L^p(\DO)$ into itself when $1<p<\infty$, $\mu>-1/2-1/p$ and $\mu>-1$. By taking into account that $R_\mu$ is bounded from $L^p(\R\times\DO)$ into itself when $2\le p<\infty$, we conclude that $R_\mu^*$ is bounded from $L^p(\R\times\DO)$ into itself provided that $\mu>-1/2-1/p$ and $2\le p<\infty$. Duality implies that $R_\mu$ defines a bounded operator from $L^p(\R\times\DO)$ into itself provided that $1<p\le 2$ and $\mu>1/p-3/2$.

 Our results about $R_\mu$ can be summarized as follows:
 \begin{itemize}
    \item $R_\mu$ is bounded from $L^p(\R\times\DO)$ into itself when
    \begin{enumerate}
        \item $\mu>-1/2$ and $1<p<\infty$.
        \item $-1<\mu\le -1/2$ and $p>\frac{1}{\mu+3/2}$.
            \end{enumerate}
    \item $R_\mu$ is bounded from $L^1(\R\times\DO)$ into $L^{1,\infty}(\R\times\DO)$, when $\mu>-1/2$.
     \end{itemize}

    \begin{center}

\begin{tikzpicture}

    \draw[gray,->](-2.2,0)--(3.5,0);
        \node [right] at (3.5,0) {$\mu$};

    \draw[gray,->](0,-0.5)--(0,3);
    \node [left] at (0,3) {$\frac{1}{p}$};

    \draw[dashed,black](-2.4,0.6)--(-0.8,2.2);

    \draw[dashed,black](-2,1)--(-2,0);
    \draw[dashed,black](-1,2)--(3,2);
    \node[above] at (1.7,2) {weak};
    \node at (1.7,1) {strong};
\node[below right] at (-4.65,1) {$\mu=-\frac{3}{2}+\frac{1}{p}$};
\draw [thick] (-.1,1) node[right]{$\frac{1}{2}$} -- (0.2,1); \draw
[thick] (-.1,2) node[right]{${1}$} -- (0.2,2); \draw [thick]
(-1,-0.1) node[below]{$\frac{-1}{2}$} -- (-1,0.1); \draw [thick]
(-2,-0.1) node[below]{${-1}$} -- (-2,0.1);
    \end{tikzpicture}

\end{center}

\section{Proof of Theorem \ref{Iteo3}}

    We firstly consider the Riesz transform $R_\mu$ defined by
    \begin{align*}
    R_\mu (f)(t,x)&=\lim_{\epsilon\to 0^+}\int_{\Omega_\epsilon(x)}\mathbb{K}_\mu(s,x,y)f(t-s,y)dsdy\\
    &+f(t,x)\frac{1}{\sqrt{\pi}}\int_1^\infty e^{-s^2/4}ds,\,\,\,a.e. \;\;(t,x)\in\R\times \DO,
    \end{align*}
    for every $f\in L^p(\R\times (0,\infty))$ $1\le p<\infty$, where $\mathbb{K}_\mu(s,x,y)=\delta_{\mu+1}\delta_\mu W_s^\mu(x,y)$, $s,x,y\in (0,\infty)$. We have that, for each $f\in L^p(\R\times (0,\infty))$, $1\le p<\infty$,
    $$
    R_\mu (f)(t,x)=\INT\INT \mathbb{K}_\mu(s,x,y)f(t-s,y)dsdy, \;\;(t,x)\not\in\text{supp}(f).
    $$
    \begin{proposition}\label{Prop1}
        Let $\mu>-1/2$ and $1<p<\infty$. The operator $R_\mu$ can be extended to $L^q(\R, L^p(\DO))$  as a bounded operator from $L^q(\R, L^p(\DO))$ into itself, for $1<q<\infty$ and
        from  $L^1(\R, L^p(\DO))$  into $L^{1,\infty}(\R, L^p(\DO))$.
    \end{proposition}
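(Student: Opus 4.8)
The plan is to recognise $R_\mu$ as a Calder\'on--Zygmund singular integral operator on the real line (the time axis), with values in the fixed Banach space $X=Y=L^p(\DO)$, and then to invoke the vector-valued Calder\'on--Zygmund theory recalled in Section~3, applied now to the space of homogeneous type $(\R,dt,|\cdot|)$ (of homogeneous dimension $1$). For each $s>0$ introduce the integral operator $\mathbb{K}_\mu(s)$ on $L^p(\DO)$ given by $[\mathbb{K}_\mu(s)g](x)=\INT\mathbb{K}_\mu(s,x,y)g(y)dy$. As noted at the beginning of this section, for $f\in C_c^\infty(\R\times\DO)$ and $(t,x)\notin\operatorname{supp}f$ one has $R_\mu(f)(t,\cdot)=\INT\mathbb{K}_\mu(s)[f(t-s,\cdot)]\,ds$, so that $R_\mu$ acts as a convolution in $t$ with the operator-valued kernel $\mathcal{K}(t,\tau)=\mathbb{K}_\mu(t-\tau)\chi_{\DO}(t-\tau)\in\mathcal{L}(L^p(\DO))$. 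The extra term $f(t,x)\frac{1}{\sqrt{\pi}}\int_1^\infty e^{-s^2/4}ds$ is a bounded multiplication operator, hence trivially bounded on every $L^q(\R,L^p(\DO))$, and may be set aside.

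First I would supply the base estimate: by Theorem~\ref{Iteo2}, (3), for $\mu>-1/2$ the operator $R_\mu$ is bounded on $L^p(\R\times\DO)=L^p(\R,L^p(\DO))$. This furnishes the $L^{p_0}$-boundedness (with $p_0=p$ and $X=L^p(\DO)$) needed to start the abstract machinery. Next I would translate the scalar kernel bounds already available into operator-norm bounds. From the pointwise estimate $|\mathbb{K}_\mu(s,x,y)|\le C(\sqrt{s}+|x-y|)^{-3}$ (assembled in \eqref{X1}, \eqref{eq2.2}, \eqref{AMI1} and \eqref{AMI2}, valid for every $\mu>-1$) one gets $\sup_x\INT|\mathbb{K}_\mu(s,x,y)|dy+\sup_y\INT|\mathbb{K}_\mu(s,x,y)|dx\le C/s$, whence Schur's test yields $\|\mathcal{K}(t,\tau)\|_{\mathcal{L}(L^p(\DO))}\le C/|t-\tau|$. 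Likewise the time-derivative estimate $|\partial_s\mathbb{K}_\mu(s,x,y)|\le C(\sqrt{s}+|x-y|)^{-5}$ from \eqref{eq2.9} gives, by Schur's test, $\|\partial_s\mathbb{K}_\mu(s)\|_{\mathcal{L}(L^p(\DO))}\le C/s^2$.

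With these in hand I would verify the smoothness (H\"ormander-type) condition. When $|t-\tau_0|>2|\tau-\tau_0|$ the numbers $t-\tau$ and $t-\tau_0$ have the same sign, so the cutoff $\chi_{\DO}$ is locally constant and the mean value theorem together with the bound on $\partial_s\mathbb{K}_\mu$ gives $\|\mathcal{K}(t,\tau)-\mathcal{K}(t,\tau_0)\|_{\mathcal{L}(L^p(\DO))}+\|\mathcal{K}(\tau,t)-\mathcal{K}(\tau_0,t)\|_{\mathcal{L}(L^p(\DO))}\le C|\tau-\tau_0|/|t-\tau_0|^2$. These are precisely the standard size and regularity conditions for a $\mathcal{L}(X,Y)$-valued standard kernel on $(\R,dt,|\cdot|)$. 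Taking $S=C_c^\infty(\R\times\DO)$, which is dense in $L^q(\R,L^p(\DO))$ for every $1\le q<\infty$ and on which the off-diagonal representation holds, the vector-valued Calder\'on--Zygmund theorem (here with the trivial weight on $\R$) applies and produces the boundedness of $R_\mu$ on $L^q(\R,L^p(\DO))$ for $1<q<\infty$ together with the weak type $(1,1)$ estimate from $L^1(\R,L^p(\DO))$ into $L^{1,\infty}(\R,L^p(\DO))$, as claimed.

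I expect the main obstacle to be Step~2: converting the scalar estimates into genuine $\mathcal{L}(L^p(\DO))$-operator-norm bounds through Schur's test and, especially, checking the smoothness across the discontinuity of $\chi_{\DO}$; once this operator-valued kernel calculus is set up the conclusion is a direct citation of the abstract theory. A secondary point requiring a line of justification is the density of $S$ in all the mixed-norm spaces $L^q(\R,L^p(\DO))$, $1\le q<\infty$, together with the validity of the off-support kernel representation on $S$, so that the extension provided by the theorem coincides with the principal-value operator defining $R_\mu$.
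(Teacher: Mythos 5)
Your proposal is correct and follows essentially the same route as the paper's own proof: the paper likewise treats $R_\mu$ as a convolution in the time variable with the operator-valued kernel $\mathbb{T}_{t-s}\in\mathcal{L}(L^p(\DO))$, obtains $\|\mathbb{T}_s\|_{L^p(\DO)\to L^p(\DO)}\le C/s$ and the H\"ormander-type regularity in $t$ from the scalar bounds \eqref{AMI3} and \eqref{eq2.9} via Schur's test and the mean value theorem, uses the $L^p(\R\times\DO)=L^p(\R,L^p(\DO))$ boundedness from Theorem \ref{Iteo2} as the base estimate, and concludes by vector-valued Calder\'on--Zygmund theory. One small correction: the size estimate \eqref{X1} (and hence \eqref{AMI3}) relies on $(xy/s)^{\mu+1/2}\le 1$ for $xy\le s$, so it holds for $\mu\ge -1/2$ rather than for every $\mu>-1$ as you claim; this is harmless here because the proposition assumes $\mu>-1/2$.
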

    \begin{proof}
     We define, for every $s\in\DO$,
        $$
        \mathbb{T}_s(F)(x)= \INT \mathbb{K}_\mu(s,x,y)F(y)dy, \;\; x\in\DO,
        $$
        for every $F\in L^p(\DO)$.

        Note that, according to  \eqref{X1}, \eqref{AMI1} and \eqref{AMI2},
        \begin{equation}\label{AMI3}
        | \mathbb{K}_\mu(s,x,y)|\le C\frac{e^{-c\frac{(x-y)^2}{s}}}{s^{3/2}},\,\,\, s,x,y\in\DO.
        \end{equation}
        Then,
        $$
        \INT| \mathbb{K}_\mu(s,x,y)| dy \le C\int_{\R}\frac{e^{-c\frac{(x-y)^2}{s}}}{s^{3/2}}dy\le \frac{C}{s}, \;\; s,x\in\DO.
        $$
        and also

        $$
        \INT| \mathbb{K}_\mu(s,x,y)| dx\le \frac{C}{s}, \;\; s,y\in\DO.
        $$
        It follows that, for every $s\in\DO$, $\mathbb{T}_s$ defines a bounded operator from $L^p(\DO)$ into itself and $\|\mathbb{T}_s\|_{p\to p}\le \frac{C}{s}$.

        For every $t,s\in\R$, we define
        $$
        \mathbb{H}(t,s)(F)=\left\{ \begin{array}{c} \mathbb{T}_{t-s}(F), \;\; t>s \\
        0, \;\;\;\;\;\; \;\;\;\;\;t\le s,
        \end{array} \right.
        $$
        for $F\in L^p(\DO)$.

        Thus, for every $t,s\in\R$, $\mathbb{H}(t,s)\in\mathcal{L}(L^p(\DO))$ and $\|\mathbb{H}(t,s)\|_{p\to p}\le \frac{C}{|t-s|}$.

        We consider, for every $g\in C_c^\infty(\mathbb{R}\times (0,\infty))\subset L^p(\mathbb{R},L^p((0,\infty)))$,
        $$
        \beta_\mu(g)(t)=\int_{\R}\mathbb{H}(t,s)(g(s))ds, \;\; t\not\in\text{supp}(g).
        $$

        Note that if $g\in C_c^\infty(\mathbb{R}\times (0,\infty))$ and $ t\not\in\text{supp}(g)$, then
        \begin{align}\label{eq5.1}
        \int_{\R}\|\mathbb{H}(t,s)(g(s))\|_{L^p( \DO)}ds&\le C\int_{\text{supp}(g)}\frac{\|g(s)\|_{L^p(\DO)}}{t-s}ds\nonumber \\&\le C\left( \int_{\text{supp}(g)}\frac{ds}{|t-s|^{q'}}\right)^{1/q'}{\|g\|_{L^q(\R,L^p(\DO))}}<\infty,
        \end{align}
        and the integral $\displaystyle\int_{\R}\mathbb{H}(t,s)(g(s))ds$ converges in the $L^q(\R)$-Bochner sense.

        We established in Theorem \ref{Iteo2} that $R_\mu$ is bounded from $L^p(\R\times\DO)=L^p(\R, L^p(\DO))$ into itself.

        Let $g\in C^\infty_c(\R\times\DO)$ and $\ell\in (L^p(\DO))'=L^{p'}(\DO)$. According to the well-known properties of Bochner integrals we have that

        \begin{align*}
        \langle\ell, \int_{\R}\mathbb{H}(t,s)(g(s))ds\rangle&=\int_{\R} \langle\ell,\mathbb{H}(t,s)(g(s))  \rangle ds=\int_{-\infty}^t\INT\ell(x) \INT\mathbb{K}_\mu(t-s,x,y)g(s,y)dydxds\\
        &=\INT \ell(x)\int_{-\infty}^t\INT\mathbb{K}_\mu(t-s,x,y)g(s,y)dydsdx, \;\; t\not\in \text{supp}g.
        \end{align*}
        The interchange of the order of integration is justified by \eqref{eq5.1}. Note that
        \begin{align*}
        &\left\| \int_{-\infty}^t\INT|\mathbb{K}_\mu(t-s,x,y)||g(s,y)|dyds \right\|_{L^p(\DO)}\\
        &\le C \left(\int_{\text{supp}g(\cdot)}\frac{ds}{|t-s|^{q'}}\right)^{\frac{1}{q'}}\|g\|_{L^q(\R,L^p(\DO))}, \;\; t\not\in\text{supp}(g).
        \end{align*}
        We conclude that, for every $t\not\in\text{supp}(g),$
        $$
        \left( \int_{\R}\mathbb{H}(t,s)(g(s))ds\right)(x)=\INT\INT\mathbb{K}_\mu(s,x,y)g(t-s,y)dyds, \;\; \text{a.e}\;\; x\in\DO.
        $$
        Suppose that $t_1,t_2,s\in\DO$, being $|t_1-s|>2|t_1-t_2|$. Then, $s>\max\{t_1,t_2\}$ or $s<\min\{t_1,t_2\}$. Let $g\in L^p(\DO)$. Assume that $s<\min\{t_1,t_2\}$. We have that
        $$
        [\mathbb{H}(t_1,s)(g)-\mathbb{H}(t_2,s)(g)](x)=\INT(\mathbb{K}_\mu(t_1-s,x,y)-\mathbb{K}_\mu(t_2-s,x,y))g(y)dy, \;\; x\in\DO.
        $$
        Note that $\mathbb{H}(t_1,s)(g)-\mathbb{H}(t_2,s)(g)=0$ when $s>\max\{t_1,t_2\}$. According to \eqref{eq2.9} we get
        \begin{align*}
        |\mathbb{K}_\mu(t_1-s,x,y)-\mathbb{K}_\mu(t_2-s,x,y) |\le \left|\frac{\partial}{\partial u}\mathbb{K}_\mu(u,x,y) \right||t_1-t_2|\le C\frac{|t_1-t_2|}{(\sqrt{u}+|x-y|)^5}, \;\; x, y\in\DO,
        \end{align*}
        for some $u\in (\min\{t_1,t_2\}-s, \max\{t_1,t_2\}-s)$. Suppose that $t_1<t_2$. Then, $u>t_1-s$ and $t_2-s=t_2-t_1+t_1-s<3(t_1-s)/2<3u/2$. We obtain
        \begin{equation}\label{AMI6}
        |\mathbb{K}_\mu(t_1-s,x,y)-\mathbb{K}_\mu(t_2-s,x,y) |\le C\frac{|t_1-t_2|}{(\sqrt{t_i-s}+|x-y|)^5}, \;\; i=1,2, \; x, y\in\DO.
        \end{equation}
        It follows that
        \begin{align*}
        \INT |\mathbb{K}_\mu(t_1-s,x,y)&-\mathbb{K}_\mu(t_2-s,x,y) |dy\\
        &\le C |t_1-t_2|\INT\frac{dy}{(\sqrt{t_1-s}+|x-y|)^5}\le C\frac{|t_1-t_2|}{|t_1-s|^2}, \;\; x\in\DO.
        \end{align*}
        Also,
        $$
        \INT |\mathbb{K}_\mu(t_1-s,y,x)-\mathbb{K}_\mu(t_2-s,y,x) |dy\le C\frac{|t_1-t_2|}{|t_1-s|^2}, \;\; x\in\DO.
        $$

        We conclude that $\mathbb{H}(t_1,s)-\mathbb{H}(t_2,s)\in \mathcal{L}( L^p(\DO))$ and
        $$
        \|\mathbb{H}(t_1,s)-\mathbb{H}(t_2,s)\|_{L^p(\DO)\to L^p(\DO)}\le C\frac{|t_1-t_2|}{|t_1-s|^2}.
        $$
        By using vector valued Calder\'on-Zygmund theory we deduce that the operator $R_\mu$ can be extended to $L^q(\R, L^p(\DO))$  as a bounded operator from
        \begin{itemize}
            \item $L^q(\R, L^p(\DO))$ into itself for every $1<q<\infty$,
            \item $L^1(\R, L^p(\DO))$  into $L^{1,\infty}(\R, L^p(\DO))$.
        \end{itemize}
    \end{proof}

     Let now $s\in\DO$. We consider again the operator
     $$
     \mathbb{T}_s(F)(x)= \INT \mathbb{K}_\mu(s,x,y)F(y)dy, \;\; x\in\DO, \; F\in L^p(\DO).
     $$
According to (\ref{AMI3}) we get
$$
|\mathbb{K}_\mu(s,x,y)|\le
C\frac{e^{-c\frac{(x-y)^2}{s}}}{s^{3/2}}\le \frac{C}{s|x-y|},
\;\;x\neq y, \;\; s,x,y\in\DO,
$$

Also by (\ref{eq2.7}) and (\ref{eq2.8}) we obtain
    \begin{align*}
    |\partial_x\mathbb{K}_\mu(x,y,s)|+&|\partial_y\mathbb{K}_\mu(x,y,s)|\le \frac{C}{(\sqrt{s}+|x-y|)^4}\\
    &\le \frac{C}{s|x-y|^2}, \;\; x\neq y, \;\; s,x,y\in\DO,
    \end{align*}
    provided that $\mu>1/2$ or $\mu=-1/2$.

    Since $\mathbb{T}_s$ is bounded from, for instance, $L^2(\DO)$ into itself and $\|\mathbb{T}_s\|_{2\to 2}\le \frac{C}{s}$, Calder\'on-Zygmund theory implies that,
    for every $1<p<\infty$ and $\omega\in A_p(\DO)$, the operator $\mathbb{T}_s$ can be extended to $L^p(\DO, \omega)$ as a bounded operator from  $L^p(\DO, \omega)$ into itself and $\|\mathbb{\mathbb{T}}_s\|_{L^p(\DO, \omega)\to L^p(\DO, \omega)}\le \frac{C}{s}, $ provided that $\mu>1/2$ or $\mu=-1/2$.

        In the sequel we assume that  $\mu>1/2$ or $\mu=-1/2$.

        Let $1<p<\infty$ and $\omega\in A_p(\DO)$. We define as above, for every $t,s\in\R$,
        $$
        \mathbb{H}(t,s)(F)=\left\{ \begin{array}{c} \mathbb{T}_{t-s}(F), \;\; t>s \\
        0, \;\;\;\;\;\; \;\;\;\;\;t\le s,
        \end{array} \right.
        $$
        for $F\in L^p(\DO)$. Also, for every $g\in L^p(\mathbb{R},L^p((0,\infty),\omega))$, we consider
         $$
        \beta_\mu(g)(t)=\int_{\R}\mathbb{H}(t,s)(g(s))ds, \;\; t\not\in\text{supp}(g).
        $$

        We have that, for every $1<p<\infty$ and $\omega\in A_p(\DO)$,
        $$
        \|\mathbb{H}(t,s)\|_{L^p(\DO,\omega)\to L^p(\DO,\omega) }\le \frac{C}{|t-s|}, \;\; t,s\in\DO, \; t\neq s.
        $$

        Then, we infer that, for every $g\in C_c^\infty(\mathbb{R}\times (0,\infty))$ and $t\notin supp\,g$,
        $$
        [\beta_\mu(g)(t)](x)=R_\mu(g)(t,x),\,\,\,a.e.\,\,\,x\in DO.
        $$

        According Theorem \ref{Iteo2}, (2), $R_\mu$ is bounded from
        $L^p(\mathbb{R}\times
        \DO,W)=L^p(\mathbb{R},L^p(\DO,\omega))$ into itself, where
        $W(t,x)=\omega(x)$, $(t,x)\in\mathbb{R}\times\DO$.

    Suppose that $t_1,t_2,s\in\DO$, being $|t_1-s|>2|t_1-t_2|$. We are going to see that,
    $$
    \|\mathbb{H}(t_1,s)-\mathbb{H}(t_2,s)\|_{L^p(\DO,\omega)\to L^p(\DO,\omega)}\le C\frac{|t_1-t_2|}{|t_1-s|^2}.
    $$
    We have proved that,
            $$
        \|\mathbb{H}(t_1,s)-\mathbb{H}(t_2,s)\|_{L^p(\DO)\to L^p(\DO)}\le C\frac{|t_1-t_2|}{|t_1-s|^2}.
        $$
    From (\ref{AMI6}) we deduce that

    \begin{align*}
    |\mathbb{K}_\mu(t_1-s,x,y)-\mathbb{K}_\mu(t_2-s,x,y)|&\le C\frac{|t_1-t_2|}{(\sqrt{t_1-s}+|x-y|)^5}\\
    &\le C \frac{|t_1-t_2|}{|t_1-s|^2}\frac{1}{|x-y|},\;\;x,y\in\DO,\,\,\, x\neq y.
    \end{align*}
    Our objective is to see that
        \begin{align*}
        \left|\frac{\partial}{\partial x}\left(\mathbb{K}_\mu(t_1-s,x,y)-\mathbb{K}_\mu(t_2-s,x,y)\right)\right|\le C \frac{|t_1-t_2|}{|t_1-s|^2}\frac{1}{|x-y|^2},\;\;x,y\in\DO,\,\,\, x\neq y,
        \end{align*}
    and
        \begin{align*}
        \left|\frac{\partial}{\partial y}\left(\mathbb{K}_\mu(t_1-s,x,y)-\mathbb{K}_\mu(t_2-s,x,y)\right)\right|\le C \frac{|t_1-t_2|}{|t_1-s|^2}\frac{1}{|x-y|^2},\;\;x,y\in\DO,\,\,\, x\neq y.
        \end{align*}
    Assume that $s<\min\{t_1,t_2\}$. We can write
    $$
    \left|\frac{\partial}{\partial x}\left(\mathbb{K}_\mu(t_1-s,x,y)-\mathbb{K}_\mu(t_2-s,x,y)\right)\right|\le C \left|\frac{\partial^2}{\partial u\partial x}\mathbb{K}_\mu(u,x,y) \right||t_1-t_2|,
    $$
    for some $u\in(\min\{t_1,t_2\}-s,\max\{t_1,t_2\}-s)$.

    We have that (see (\ref{X2}))
    \begin{align*}
    \partial_x\mathbb{K}_\mu(s,x,y)&=\frac{x^{\mu+5/2}y^{\mu+1/2}}{(2s)^{\mu+3}}e^{-\frac{x^2+y^2}{4s}}\bigg(\frac{xy^6}{(2s)^4}\left(\frac{xy}{2s} \right)^{-\mu-3}I_{\mu+3}\left(\frac{xy}{2s} \right)\\
    &+\left(\frac{xy}{2s} \right)^{-\mu-2}I_{\mu+2}\left(\frac{xy}{2s} \right)\frac{xy}{2s}\left(-\frac{3y^3}{(2s)^2}+\frac{\mu+5/2}{2s}\frac{y^3}{x^2}\right)  \\
    &+\left(\frac{xy}{2s} \right)^{-\mu-1}I_{\mu+1}\left(\frac{xy}{2s} \right)\frac{xy}{2s}\left(\frac{3y}{2s}-\frac{2(\mu+5/2)y}{x^2}\right)\\
    &+\left(\frac{xy}{2s} \right)^{-\mu}I_\mu\left(\frac{xy}{2s} \right)\left( -\frac{x}{2s}+\frac{\mu+5/2}{x}\right) \bigg),\;\; x,y,s\in\DO.
    \end{align*}

    Let $x,y\in\DO$. We define the function
        \begin{align*}
    F_{x,y}(z)&=\frac{x^{\mu+5/2}y^{\mu+1/2}}{(2z)^{\mu+3}}e^{-\frac{x^2+y^2}{4z}}\bigg(\frac{xy^6}{(2z)^4}\left(\frac{xy}{2z} \right)^{-\mu-3}I_{\mu+3}\left(\frac{xy}{2z} \right)\\
    &+\left(\frac{xy}{2z} \right)^{-\mu-2}I_{\mu+2}\left(\frac{xy}{2z} \right)\frac{xy}{2z}\left(-\frac{3y^3}{(2z)^2}+\frac{\mu+5/2}{2z}\frac{y^3}{x^2}\right)  \\
    &+\left(\frac{xy}{2z} \right)^{-\mu-1}I_{\mu+1}\left(\frac{xy}{2z} \right)\frac{xy}{2z}\left(\frac{3y}{2z}-\frac{2(\mu+5/2)y}{x^2}\right)\\
    &+\left(\frac{xy}{2z} \right)^{-\mu}I_\mu\left(\frac{xy}{2z} \right)\left( -\frac{x}{2z}+\frac{\mu+5/2}{x}\right) \bigg),\;\; z\in\C\setminus(-\infty,0].
    \end{align*}
    $F_{x,y}$ is holomorphic in $\C\setminus(-\infty,0]$.

According to (\ref{P1}) and (\ref{P2}), by proceeding as in the
proof of (\ref{eq2.7}), we obtain
    $$
    |F_{x,y}(z)|\le C\frac{e^{-c\frac{(x-y)^2}{|z|}}}{|z|^2}, \;\; |Arg z|\le\frac{\pi}{4}.
    $$
    By using Cauchy integral formula we obtain that
    $$
    \left| \frac{d}{dt}F_{x,y}(t)\right|\le \frac{C}{t^3}e^{-\frac{(x-y)^2}{32t}}\le \frac{C}{(\sqrt{t}+|x-y|)^6}, \;\; t>0.
    $$
    By proceeding as above we get
    \begin{align*}
    |{\partial_x}\left(\mathbb{K}_\mu(t_1-s,x,y)-\mathbb{K}_\mu(t_2-s,x,y)\right)|\le C \frac{|t_1-t_2|}{|t_1-s|^2}\frac{1}{|x-y|^2},
    \end{align*}
    and
        \begin{align*}
        |{\partial_y}\left(\mathbb{K}_\mu(t_1-s,x,y)-\mathbb{K}_\mu(t_2-s,x,y)\right)|\le C \frac{|t_1-t_2|}{|t_1-s|^2}\frac{1}{|x-y|^2},
        \end{align*}
        for $x,y\in\DO$, $x\neq y$.

        Calder\'on-Zygmund theory leads to
        $$
        \|\mathbb{H}(t_1,s)-\mathbb{H}(t_2,s)\|_{L^p(\DO,\omega)\to L^p(\DO,\omega)}\le C\frac{|t_1-t_2|}{|t_1-s|^2}.
        $$
        In a similar way we can see that
        $$
        \|\mathbb{H}(t,s_1)-\mathbb{H}(t,s_2)\|_{L^p(\DO,\omega)\to L^p(\DO,\omega)}\le C\frac{|s_1-s_2|}{|s_1-t|^2},
        $$
        provided $|s_1-t|\ge 2|s_1-s_2|$.

            Calder\'on-Zygmund theory implies that, for every $\omega\in A^p(\DO)$, $1<p<\infty$, $R_\mu$ defines a bounded operator
            \begin{itemize}
            \item from $L^q(\R, v,L^p( \DO,\omega))$ into itself, for every $1<q<\infty$ and $v\in A^q(\R)$,
                \item   from $L^1(\R, v,L^p( \DO,\omega))$ into $L^{1,\infty}(\R, v,L^p( \DO,\omega))$, for every $v\in A^1(\R)$,
            \end{itemize}
        provided that $\mu=-1/2$ and $\mu>1/2$.

        In particular, for every $1<p<\infty$, $v\in A^p(\R)$ and $\omega\in A^p(\DO)$, $R_\mu$ defines a bounded operator from $L^p( \R\times\DO,v\omega)$ into itself, when $\mu=-1/2$ or $\mu>1/2$.

        We had proved that, for every $1<p<\infty$ and $W\in A^p_*(\R\times\DO)$, $R_\mu$ defines a bounded operator from  $L^p( \R\times\DO,W)$ into itself.

The proof of Theorem \ref{Iteo3} for the Riesz transformation
$R_\mu$ is finished.

\vspace{5mm}

Our next objective is to get mixed norm inequalities for the Riesz transform $\widetilde{R_\mu}$ defined by, for every $f\in L^p(\R\times\DO)$, $1\le p<\infty$,
    $$
    \widetilde{R_\mu}(f)(t,x)=\lim_{\epsilon\to 0^+}\int_{\Omega_\epsilon(x)}\mathcal{K}_\mu(s,x,y)f(t-s,y)dyds+f(t,x)\frac{1}{\sqrt{\pi}}\int_0^1 e^{-s^2/4}ds, \;\; \text{a.e.}\;\; (t,x)\in\R\times\DO,
    $$
    where $\mathcal{K}_\mu(s,x,y)=\frac{\partial}{\partial s} (W_s^\mu(x,y))$,   $s,x,y\in\DO$.

 \begin{proposition}\label{Prop2}
        Let $\mu>-1/2$ and $1<p<\infty$. The operator $\widetilde{R_\mu}$ can be extended to $L^q(\R, L^p(\DO))$  as a bounded operator from $L^q(\R, L^p(\DO))$ into itself, for $1<q<\infty$ and
        from  $L^1(\R, L^p(\DO))$  into $L^{1,\infty}(\R, L^p(\DO))$.
    \end{proposition}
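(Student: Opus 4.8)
The plan is to follow verbatim the scheme of the proof of Proposition \ref{Prop1}, replacing the kernel $\mathbb{K}_\mu$ by $\mathcal{K}_\mu(s,x,y)=\partial_s W_s^\mu(x,y)$ and reducing the question to a vector-valued Calder\'on-Zygmund operator in the time variable with values in $\mathcal{L}(L^p(\DO))$. For every $s\in\DO$ I would first introduce the operator
\[
\mathbb{T}_s(F)(x)=\INT\mathcal{K}_\mu(s,x,y)F(y)dy,\qquad x\in\DO,\ F\in L^p(\DO).
\]
The size estimate $|\mathcal{K}_\mu(s,x,y)|\le C(\sqrt{s}+|x-y|)^{-3}$ established in the study of $\widetilde{K_\mu}$ (valid for every $\mu\ge-1/2$, hence in our range) yields $\INT|\mathcal{K}_\mu(s,x,y)|dy\le C/s$ and, by symmetry, $\INT|\mathcal{K}_\mu(s,x,y)|dx\le C/s$. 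Schur's test then gives $\|\mathbb{T}_s\|_{p\to p}\le C/s$ for every $1<p<\infty$.

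Next I would define the operator-valued kernel $\mathbb{H}(t,s)=\mathbb{T}_{t-s}$ when $t>s$ and $\mathbb{H}(t,s)=0$ otherwise, so that $\|\mathbb{H}(t,s)\|_{p\to p}\le C/|t-s|$. The associated integral operator $\beta_\mu(g)(t)=\int_\R\mathbb{H}(t,s)(g(s))ds$ is well defined for $g\in C_c^\infty(\R\times\DO)$ and $t\notin\mathrm{supp}\,g$, and as in Proposition \ref{Prop1} one checks, using Fubini and the decay of $\mathcal{K}_\mu$, that $[\beta_\mu(g)(t)](x)=\widetilde{R_\mu}(g)(t,x)$ for a.e. $x\in\DO$. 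Here the essential input is Theorem \ref{Iteo2}, $(3)$: since $\mu>-1/2$, the operator $\widetilde{R_\mu}$ is already bounded on $L^p(\R\times\DO)=L^p(\R,L^p(\DO))$, which provides the base (diagonal) boundedness required by Calder\'on-Zygmund theory. The smoothness of $\mathbb{H}$ in the time variable follows from the derivative estimate $|\partial_s\mathcal{K}_\mu(s,x,y)|\le C(\sqrt{s}+|x-y|)^{-5}$ (also available for every $\mu>-1/2$): by the mean value theorem, for $|t_1-s|>2|t_1-t_2|$ one may assume $s<\min\{t_1,t_2\}$ and obtains $|\mathcal{K}_\mu(t_1-s,x,y)-\mathcal{K}_\mu(t_2-s,x,y)|\le C|t_1-t_2|(\sqrt{t_i-s}+|x-y|)^{-5}$, so that integrating in $y$ (resp. $x$) and applying Schur's test gives $\|\mathbb{H}(t_1,s)-\mathbb{H}(t_2,s)\|_{p\to p}\le C|t_1-t_2|/|t_1-s|^2$.

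With the size and smoothness estimates in hand, $\mathbb{H}$ is a standard $\mathcal{L}(L^p(\DO))$-valued Calder\'on-Zygmund kernel on $(\R,dt,|\cdot|)$. The vector-valued Calder\'on-Zygmund theorem recalled in Section 3, together with the base case furnished by Theorem \ref{Iteo2}, $(3)$, then extends $\widetilde{R_\mu}$ to a bounded operator from $L^q(\R,L^p(\DO))$ into itself for every $1<q<\infty$, and from $L^1(\R,L^p(\DO))$ into $L^{1,\infty}(\R,L^p(\DO))$, which is the assertion. I do not expect a genuine obstacle: the argument is entirely parallel to Proposition \ref{Prop1}, and the only point that must be checked with care is that both scalar bounds for $\mathcal{K}_\mu$ hold on the full range $\mu>-1/2$ (rather than the more restrictive range $\mu>1/2$ or $\mu=-1/2$ needed for the weighted spatial estimates), which is indeed the case precisely because no bound on the $x$- or $y$-derivatives of $\mathcal{K}_\mu$ enters this unweighted mixed-norm statement.
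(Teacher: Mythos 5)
Your proposal is correct and is essentially the paper's own proof: the paper reduces Proposition \ref{Prop2} in exactly the same way to the vector-valued Calder\'on--Zygmund theorem in the time variable, using the size estimate \eqref{F3}, the bound $|\partial_s\mathcal{K}_\mu(s,x,y)|\le C(\sqrt{s}+|x-y|)^{-5}$ obtained from \eqref{F1} via the Cauchy integral formula, and the diagonal boundedness of $\widetilde{R_\mu}$ on $L^p(\R\times\DO)=L^p(\R,L^p(\DO))$ from Theorem \ref{Iteo2}. Your closing remark is precisely the point the paper exploits: only the size and time-regularity estimates for $\mathcal{K}_\mu$ (valid for all $\mu\ge -1/2$) are needed here, not the spatial derivative bounds whose proof forces the restriction $\mu>1/2$ or $\mu=-1/2$.
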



\begin{proof} This result can be proved by proceeding as in the proof of Proposition \ref{Prop1}. We define, for every $s\in\DO$,
$$
\mathcal{T}_s(F)(x)=\INT\mathcal{K}_\mu(s,x,y)F(y)dy, \;\; x\in\DO.
$$
%
%
%
   By (\ref{F1}), we have that
    \begin{equation}\label{F3}
    |\mathcal{K}_\mu(s,x,y)|\le C\frac{e^{-c\frac{(x-y)^2}{s}}}{s^{3/2}}, \;\; s,x,y\in\DO.
    \end{equation}
    Then, for every $s\in\DO$, $\mathbb{T}_s$ defines a bounded operator from $L^p(\DO)$ into itself and $\| \mathbb{T}_s\|_{L^p(\DO)\to L^p(\DO)}\le\frac{C}{s}.$

For every $t,s\in\R$, we define
        $$
        \mathcal{H}(t,s)(F)=\left\{ \begin{array}{c} \mathcal{T}_{t-s}(F), \;\; t>s \\
        0, \;\;\;\;\;\; \;\;\;\;\;t\le s,
        \end{array} \right.
        $$
        for $F\in L^p(\DO)$.

        Thus, for every $t,s\in\R$, $\mathcal{H}(t,s)\in\mathcal{L}(L^p(\DO))$ and $\|\mathcal{H}(t,s)\|_{p\to p}\le \frac{C}{|t-s|}$.

        Let $1<q< \infty$, We consider, for every $g\in L^q(\R,L^p(\DO) )$,
        $$
        \gamma_\mu(g)(t)=\int_{\R}\mathcal{H}(t,s)(g(s))ds, \;\; t\not\in\text{supp}(g).
        $$

        In Theorem \ref{Iteo2} we proved that the Riesz
        transformation $\widetilde{R_\mu}$ is bounded $L^p(\R\times
        (0,\infty))=L^p(\R,L^p((0,\infty)))$.

        We have that, for every $t\not\in\text{supp}(g),$
        $$
        \left( \int_{\R}\mathcal{H}(t,s)(g(s))ds\right)(x)=\INT\INT\mathcal{K}_\mu(s,x,y)g(t-s,y)dyds, \;\; \text{a.e}\;\;
        x\in\DO.
        $$
        Also, if $t_1,t_2,s\in\DO$, being $|t_1-s|>2|t_1-t_2|$, $\mathcal{H}(t_1,s)-\mathcal{H}(t_2,s)\in \mathcal{L}(
        L^p(\DO))$.
        $$
        \|\mathcal{H}(t_1,s)-\mathcal{H}(t_2,s)\|_{p\to p}\le C\frac{|t_1-t_2|}{|t_1-s|^2}.
        $$
        By invoking vector valued Calder\'on-Zygmund theory we prove that the operator $\widetilde{R_\mu}$ can be extended to $L^q(\R, L^p(\DO))$  as a bounded operator from
        \begin{itemize}
            \item $L^q(\R, L^p(\DO))$ into itself for every $1<q<\infty$,
            \item $L^1(\R, L^p(\DO))$  into $L^{1,\infty}(\R, L^p(\DO))$.
        \end{itemize}
    \end{proof}

Suppose now that $\mu>1/2$ or $\mu=-1/2$.

    By (\ref{F3}), we get
    $$
    |\mathcal{K}_\mu(s,x,y)|\le\frac{C}{s|x-y|}, \;\; s,x,y\in\DO, \;\, \text{and}\, \; x\neq y.
    $$

    From (\ref{F2}), we have that
    $$
    \left|\frac{\partial}{\partial x}\mathcal{K}_\mu(s,x,y)\right|\le C \frac{e^{-c\frac{(x-y)^2}{s}}}{s^2}\le \frac{C}{s|x-y|^2}, \;\; s,x,y\in\DO.
    $$
    By symmetry, we also have that
    $$
    \left|\frac{\partial}{\partial y}\mathcal{K}_\mu(s,x,y)\right|\le \frac{C}{s|x-y|^2}, \;\; s,x,y\in\DO.
    $$
    By using Calder\'on-Zygmund theory we deduce that, for every $1<p<\infty$ and $\omega\in A^p(\DO)$, $\mathcal{T}_s$ defines a bounded operator
    from $L^p(\DO,\omega)$ into itself and $\|\mathcal{T}_s\|_{L^p(\DO,\omega)\to L^p(\DO,\omega)}\le\frac{C}{s}$, for each $s\in\DO$.

    We consider again, for every $t,s\in\R$,
    $$
    \mathcal{H}(t,s)=\left\{ \begin{array}{c} \mathcal{T}_{t-s}, \;\; t>s \\
    0, \;\;\;\;\;\; \;\;\;\;\;t\le s.
    \end{array} \right.
    $$

    Let $1<p<\infty$ and $\omega\in A^p(\DO)$. For every $t,s\in\R$, $\mathcal{H}(t,s)$ defines a bounded operator from $L^p(\DO,\omega)$ into itself and $\|\mathcal{H}(t,s)\|_{L^p(\DO,\omega)\to L^p(\DO,\omega)}\le \frac{C}{|t-s|}$.

    Let $1<q< \infty$, We consider, as above, for every $g\in L^q(\R,L^p(\DO),\omega ))$,
    $$
    \gamma_\mu(g)(t)=\int_{\R}  \mathcal{H}(t,s)(g(s))ds, \;\; t\not\in\text{supp}(g).
    $$
    The last integral is absolutely convergent in the $L^p(\DO,\omega)$-Bochner sense. Moreover, by proceeding as above we get that, if $g\in L^q(\R,L^p(\DO),\omega )$ and $t\not\in\text{supp}(g)$, then
    $$
    [\gamma_\mu(g)(t)](x)=\INT\INT  \mathcal{K}_\mu(s,x,y)g(t-s,y)dyds, \;\; a.e. \; x\in\DO.
    $$
    Suppose that $t_1,t_2,s\in\R$, being $|t_1-s|>2|t_1-t_2|$. Then, we have
    that $\mathcal{H}(t_1,s)-
\mathcal{H}(t_2,s)$ defines a bounded operator from
$L^p(\DO,\omega)$ into itself and
    $$
    \|\mathcal{H}(t_1,s)- \mathcal{H}(t_2,s)\|_{L^p(\DO,\omega)\to L^p(\DO,\omega)}\le C\frac{|t_1-t_2|}{|s-t_1|^2}.
    $$

Moreover, according to Theorem \ref{Iteo2}, (2), the operator
$\widetilde{R_\mu}$ is bounded from $L^p(\R\times\DO,
W)=L^p(\mathbb{R},L^p(\DO,\omega))$, where $W(t,x)=\omega(x)$,
$(t,x)\in \R\times \DO$, because $W\in A_p^*(\R\times\DO)$.

Again, according to Calder\'on-Zygmund theory we deduce that
$\widetilde{R_\mu}$ defines, for every $1<q<\infty$ and $v\in
A^q(\R)$, a bounded operator from $L^q(\R,v,L^p(\DO,\omega))$ into
itself and from $L^1(\R,v,L^p(\DO,\omega))$ into
$L^{1,\infty}(\R,v,L^p(\DO,\omega))$, for every $v\in A^1(\R)$.

The same remark at the end of the study of the mixed norm
inequalities for $R_\mu$ is now in order with respect to
$\widetilde{R_\mu}$.  \vspace{1cm}

\section{Proof of Theorems \ref{Iteo4}, \ref{Iteo5}, and \ref{Iteo6}}

\subsection{Proof of Theorem \ref{Iteo4}} Let $\mu>-1$. We consider the following Cauchy problem

    \begin{equation}\label{CP1}
    \left\{ \begin{array}{c}
    \partial_t u(t,x)=\Delta_\mu u(t,x)+f(t,x), \;\; t,x\in\DO , \\
    u(0,x)=g(x), \;\; x\in \DO.
    \end{array} \right.
    \end{equation}
Suppose initially that $f\in L_c^\infty(\DO\times\DO)$ and $g\in
L^\infty_c(\DO)$. We define
\begin{align*}
    u(t,x)&=\int_0^t\INT W_\tau^\mu(x,y)f(t-\tau,y) dy d\tau+\INT W_t^\mu(x,y)g(y) dy\\
    &=u_1(t,x)+u_2(t,x), \;\; t,x\in\DO.
\end{align*}
According to (\ref{V1}) the integrals defining $u_1$ and $u_2$ are
absolutely convergent for every $t,x\in\DO$.

Assume now that $f\in C_c^2(\DO\times\DO)$. By (\ref{T3}),
(\ref{A5bis}), and (\ref{A6}), we have that $\partial_t
u_2(t,x)=\Delta_\mu u_2(t,x)$, $(t,x)\in\DO\times\DO$. Moreover,

$$
\partial_tu_2(t,x)=\INT\partial_t W_t^\mu(x,y)g(y)dy, \;\, (t,x)\in\DO\times\DO,
$$
and
$$
\partial_x^i u_2(t,x)=\INT \partial_x^i W_t^\mu(x,y)g(y)dy, \;\; (t,x)\in\DO\times\DO \;\; \text{and}\;\; i=1,2.
$$
By \cite[Theorem 2.1]{BHNV} we have that $\displaystyle\lim_{t\to 0}
u_2(t,x)=g(x)$, for a.e. $x\in\DO$ and for every $x\in\DO$ provided
that $g$ is continuous on $\DO$.

By proceeding as in the first section and by taking into account
\cite[(2.17)]{PST} we can obtain that, for $i=1,2$,
$$
\partial_x^i u_1(t,x)=\displaystyle\lim_{\epsilon\to 0^+}\int_\epsilon^t \INT \partial_x^i W_\tau^\mu(x,y)f(t-\tau,y)dyd\tau, \;\; (t,x)\in\DO\times\DO.
$$
By using parametric derivation we get
\begin{align*}
\partial_t u_1(t,x)&=\int_0^t\INT W_\tau^\mu(x,y)\partial_tf(t-\tau,y)dyd\tau=-\int_0^t\INT W_\tau^\mu (x,y)(\partial_\tau f)(t-\tau,y)dyd\tau\\
&=-\int_0^t\INT(W_\tau^\mu(x,y)-W_\tau(x-y))(\partial_\tau f)(t-\tau,y)dyd\tau-\int_0^t\INT W_\tau (x-y)(\partial_\tau f)(t-\tau,y)dyd\tau\\
&=\int_0^t\INT\partial_\tau(W_\tau^\mu(x,y)-W_\tau(x-y))f(t-\tau,y)dyd\tau-\int_0^t\INT W_\tau(x-y)(\partial_\tau f)(t-\tau,y)dyd\tau\\
&=\displaystyle\lim_{\epsilon\to 0^+}\Bigg(\int_\epsilon^t \INT\partial_\tau(W_\tau^\mu(x,y)-W_\tau(x-y))f(t-\tau,y)dyd\tau-\int_\epsilon^t\INT W_\tau(x-y)(\partial_\tau f)(t-\tau,y)dyd\tau \Bigg)\\
&=\lim_{\epsilon\to 0^+}\int_{\epsilon}^{t}\INT \partial_\tau
W_\tau^\mu (x,y)f(t-\tau,y)dyd\tau+f(t,x), \;\;
(t,x)\in\DO\times\DO.
\end{align*}
Putting together the above equalities we get
$$
\partial_t u_1(t,x)=\Delta_\mu u_1(t,x)+f(t,x),\;\; (t,x)\in\DO\times\DO.
$$
Moreover, by (\ref{V1}) since $f$ has compact support, we can find
$a>0$ such that, for every $x\in (0,\infty)$, there exists $C>0$ for
which
$$
|u_1(t,x)|\le
C\int_0^t\INT\frac{e^{-c\frac{|x-y|^2}{\tau}}}{\sqrt{\tau}}dyd\tau\le
C t, \;\; 0<t<a.
$$
Then, $\displaystyle\lim_{t\to 0^+} u_1(t,x)=0, \;\; x\in\DO.$ Thus,
we prove that the function $u$ is a classical solution of
\eqref{CP1}.

\subsection{Proof of Theorem \ref{Iteo5}} Assume now that $\mu>-1/2$. It was established in (\ref{X1}), (\ref{eq2.2}, (\ref{AMI1}), and (\ref{AMI2} that
$$
|\delta_{\mu+1}\delta_\mu W_\tau^\mu(x,y)|\le
C\frac{e^{-c\frac{(x-y)^2}{\tau}}}{\tau^{3/2}}, \;\; \tau,x,y\in\DO.
$$
We can write
\begin{align*}
\delta_{\mu+1}\delta_\mu u_1(t,x)&=\frac{1}{x^2}(\mu+3/2)(\mu+5/2)u_1(t,x)-2\frac{(\mu+1)}{x}\frac{\partial}{\partial x}u_1(t,x)+\frac{\partial^2}{\partial x^2} u_1(t,x)\\
&=\lim_{\epsilon\to 0^+} \int_\epsilon^t\INT
\delta_{\mu+1}\delta_\mu W_\tau^\mu(x,y)f(t-\tau,y)dyd\tau, \;\;
t,x\in\DO.
\end{align*}
By proceeding as in \cite[page 11]{PST} we get
\begin{align*}
|(\chi_{\Omega_\epsilon(x)}(\tau,y)&-\chi_{\{\tau>\epsilon^2\}}(\tau))\chi_{\{\tau<t\}}(\tau)\delta_{\mu+1}\delta_\mu W_\tau^\mu(x,y)| \chi_{\{\tau>0\}}(\tau)\\
&\le C\chi_{\{|x-y|>\epsilon\}}(y)\chi_{(0,\epsilon^2)
}(\tau)\frac{e^{-c\frac{(x-y)^2}{\tau}}}{\tau^{3/2}}\\
& \le C\chi_{\{|x-y|>\epsilon\}}(y)\chi_{(0,\epsilon^2)
}(\tau)\frac{\tau}{|x-y|^5}\\
&\le C\chi_{\{|x-y|>\epsilon\}}(y)\chi_{(0,\epsilon^2)
}(\tau)\frac{\tau}{(|x-y|+\sqrt{\tau})^5}\\
&\le C\chi_{\{|x-y|+\sqrt{\tau}>\epsilon\}}(y)
\frac{\epsilon^2}{(|x-y|+\sqrt{\tau})^5}\\
&\le
C\frac{1}{\epsilon^3}\psi\Big(\frac{|x-y|+\sqrt{\tau}}{\epsilon}\Big),
\;\; \epsilon,\tau,x,y\in\DO,
\end{align*}
where $\psi(z)=\chi_{(\epsilon,\infty)}(z)z^{-5}$, $z\in
(0,\infty)$.

 Suppose that $f$ is  a measurable function on $\R\times\DO$
such that $f(s,y)=0$, $s<0$. We can write
\begin{align*}
&\left|\iint_{\Omega_\epsilon(x)}\delta_{\mu+1}\delta_\mu W_\tau^\mu(x,y)f(t-\tau,y)d\tau dy-\int_{\epsilon^2}^{t}\INT\delta_{\mu+1}\delta_\mu W_\tau^\mu(x,y)f(t-\tau,y)d\tau dy \right|\\
&\le C\int_{|x-y|>\epsilon}\int_{0<\tau<\epsilon^2}\frac{\epsilon^2}{(\sqrt{\tau}+|x-y|^2)^5}|f(t-\tau,y)|d\tau dy\\
&\le C\sum_{k=0}^\infty\int_{2^k\epsilon<|x-y|+\sqrt{|\tau|}<2^{k+1}\epsilon}\frac{\epsilon^2}{(\sqrt{|\tau|}+|x-y|^2)^5}|f(t-\tau,y)|d\tau dy\\
&\le C\sum_{k=0}^\infty\frac{1}{2^{2k}}\frac{1}{(2^{k}\epsilon)^3}\int_{|x-y|+\sqrt{|\tau|}<2^{k+1}\epsilon}|f(t-\tau,y)|d\tau dy\\
&\le C  \mathcal{M}(f) (x,t), \;\; \epsilon,x,t\in\DO,
\end{align*}
where $ \mathcal{M}$ denotes the centered maximal function in our
parabolic setting.

We consider the maximal operator $T_*$ defined by
$$
T_*(f)(t,x)=\sup_{0<\epsilon<\sqrt{t}}\left|\left(\int_{\Omega_\epsilon(x)}-\int_{\epsilon^2}^t\INT
\right)\delta_{\mu+1}\delta_\mu W_\tau^\mu(x,y) f(t-\tau)d\tau dy
\right|, \;\; t,x\in\DO.
$$
According to the boundedness properties of the maximal function
$\mathcal{M}$ we deduce that the operator $T_*$ is  bounded from $
L^p(\R\times\DO,\omega)$
        \begin{itemize}
            \item into $ L^p(\R\times\DO,\omega)$, for every $1<p<\infty$ and $\omega\in A_p^*(\R\times\DO)$.
                \item into $ L^{1,\infty}(\R\times\DO,\omega)$, for $p=1$ and $\omega\in A_1^*(\R\times\DO)$.
        \end{itemize}

Let $f\in{L^p(\DO^2,\omega)}$ with $1\le p<\infty$ and $\omega\in
A^p_*(\R\times\DO)$. We define $\tilde{f}$ by
$$
\tilde{f}(s,x)=\begin{cases} f(s,x),
& \mbox{ $s,x>0$,}\\
0, & \mbox{ $s\le0$, $x>0$.}
\end{cases}
$$
If $\mu>1/2$ or $\mu=-1/2$, by Theorem \ref{Iteo2}, we know that
there exists the limit
$$
\displaystyle\lim_{\epsilon\to
0^+}\int_{\Omega_\epsilon(x)}\delta_{\mu+1}\delta_\mu
W_\tau^\mu(x,y)\tilde{f}(t-\tau,y)d\tau dy, \;\; a.e.
\,\,\,(t,x)\in\DO^2.
$$
Then, a wellknown argument allows us to obtain the existence of the
limit
$$
S_\mu(f)(t,x)=\lim_{\epsilon\to
0^+}\int_{\epsilon}^t\INT\delta_{\mu+1}\delta_\mu
W_\tau^\mu(x,y)\tilde{f}(t-\tau,y)d\tau dy, \;\; a.e.
\,\,\,(t,x)\in\DO^2.
$$
By using again Theorem \ref{Iteo2} we can also prove that the above
limit exists for almost everywhere $(t,x)\in \DO^2$, when $\mu\in
(-1/2,1/2]$ and $f\in L^p(\DO^2)$.

From Theorem \ref{Iteo2} and the boundedness properties of the
operator $T_*$ we deduce that

\begin{enumerate}
\item If $\mu>1/2$ or $\mu=-1/2$. The operator $S_\mu$ is  bounded from $
L^p(\R\times\DO,\omega)$
        \begin{itemize}
            \item into $ L^p(\R\times\DO,\omega)$, for every $1<p<\infty$ and $\omega\in A_p^*(\R\times\DO)$.
                \item into $ L^{1,\infty}(\R\times\DO,\omega)$, for $p=1$ and $\omega\in A_1^*(\R\times\DO)$.
        \end{itemize}
        \item If $\mu>-1/2$, The operator $S_\mu$ is  bounded from  $ L^p(\R\times\DO)$
            \begin{itemize}
                \item into $ L^p(\R\times\DO)$, for every $1<p<\infty$.
                \item into $ L^{1,\infty}(\R\times\DO)$, for $p=1$.
            \end{itemize}
            \end{enumerate}

According to \eqref{T3}, we have that
$$
|\partial_\tau W_\tau^\mu(x,y)|\le
C\frac{e^{-c|x-y|^2/\tau}}{\tau^{3/2}}, \;\; x,y,\tau\in\DO.
$$
By proceeding as above we can prove that the desired properties for
the Riesz transforms $\widetilde{{\bf R}_\mu}$.


\subsection{Proof of Theorem \ref{Iteo6}} We now consider the Cauchy problem

    \begin{equation*}
    \left\{ \begin{array}{c}
    \partial_t u(t)=\Delta_\mu u(t)+f(t), \;\; t\in\DO , \\
    u(0)=0,\hspace{4cm}
    \end{array} \right.
    \end{equation*}
with $\mu>-1/2$ and $f\in L^p(\DO, L^q(\DO))$, being $1<p,q<\infty$.
The operator $\Delta_\mu$ generates the semigroup
$\{W_t^\mu\}_{t>0}$ in $L^q(\DO)$, where, for every $t>0$,
$$
W_t^\mu(g)(x)=\INT W_t^\mu (x,y) g(y)dy, \;\; g\in L^q(\DO).
$$
By (\ref{P1}) and (\ref{P2}) we have that
$$
|W_z^\mu(x,y)|\le C\frac{e^{-c\frac{|x-y|^2}{|z|}}}{|z|^{1/2}}, \;\;
|Arg z|<\frac{\pi}{4}, \;\; x,y\in \DO.
$$
Then, according to \cite[Theorem 3.3]{Monniaux}, $\Delta_\mu$ has
the maximal $L^p$-regularity property in $L^q(\DO)$, that is, there
exists $C>0$ such that
\begin{equation}\label{(*)}
\|\mathcal{R}_\mu f\|_{L^p(\DO,L^q(\DO))}\le C
\|f\|_{L^p(\DO,L^q(\DO))},
\end{equation}
where $$\mathcal{R}_\mu f(t)=\int_0^t\Delta_\mu
W_{t-s}(f(s))ds=\int_0^t\partial_t W_{t-s}(f(s))ds, \;\; f\in
L^p(\DO,L^q(\DO)).
$$
Note that \eqref{(*)} is a mixed-norm inequality.

\end{document}